\documentstyle[amssymb,amsfonts,12pt]{amsart}

\newtheorem{theorem}{Theorem}[section]
\newtheorem{lemma}[theorem]{Lemma}

\newtheorem{proposition}[theorem]{Proposition}
\newtheorem{definition}[theorem]{Definition}

\newtheorem{question}[theorem]{Question}\newtheorem{conjecture}[theorem]{Conjecture}

\theoremstyle{remark}
\newtheorem{remark}[theorem]{Remark}
\newtheorem*{ack*}{Acknowledgment}

\textwidth16cm
\topmargin0cm
\oddsidemargin0cm
\evensidemargin0cm
\textheight22.5cm

\def\v{{\bf v}}
\def\T{{\mathbb T}}
\def\F{{\mathcal F}}
\def\R{{\mathbb R}}
\def\E{{\mathbb E}}
\def\N{{\mathbb N}}
\def\C{{\mathbb C}}

\def\Z{{\mathbb Z}}
\def\P{{\mathcal P}}
\def\H{{\mathcal T}}
\def\W{{\mathcal L}}
\def\A{{\mathcal N}}
\def\dist{{\operatorname{dist}}}
\def\supp{{\operatorname{supp}}}
\def\bas{\begin{align*}}
\def\eas{\end{align*}}
\def\bi{\begin{itemize}}
\def\ei{\end{itemize}}
\newenvironment{proof}{\noindent {\bf Proof} }{\endprf\par}
\def \endprf{\hfill  {\vrule height6pt width6pt depth0pt}\medskip}
\def\emph#1{{\it #1}}

\begin{document}

\title[The proof of the $l^2$ Decoupling Conjecture]{The proof of the $l^2$ Decoupling Conjecture}
\author{Jean Bourgain}
\address{School of Mathematics, Institute for Advanced Study, Princeton, NJ 08540}
\email{bourgain@@math.ias.edu}
\author{Ciprian Demeter}
\address{Department of Mathematics, Indiana University, 831 East 3rd St., Bloomington IN 47405}
\email{demeterc@@indiana.edu}

\keywords{discrete restriction estimates, Strichartz estimates, additive energy}
\thanks{The first author is partially supported by the NSF grant DMS-1301619. The second  author is partially supported  by the NSF Grant DMS-1161752}
\begin{abstract}
We prove the $l^2$ Decoupling Conjecture for compact hypersurfaces  with  positive definite second fundamental form and also for the cone. This has a wide range  of important consequences. One of them is the validity of the Discrete Restriction Conjecture, which   implies the full range of expected $L^p_{x,t}$ Strichartz estimates for both the rational and (up to $N^\epsilon$ losses) the irrational torus. Another one is an improvement in the range for the discrete restriction theorem for lattice points on the sphere. Various applications to Additive Combinatorics, Incidence Geometry and Number Theory are also discussed.
Our argument relies on the interplay between linear and multilinear restriction theory.
\end{abstract}
\maketitle

\section{The $l^2$ Decoupling Theorem}
\bigskip

Let $S$ be a compact $C^2$ hypersurface in $\R^n$  with  positive definite second fundamental form.  Examples include the sphere $S^{n-1}$ and the truncated (elliptic) paraboloid
$$P^{n-1}:=\{(\xi_1,\ldots,\xi_{n-1},\xi_1^2+\ldots+\xi_{n-1}^2)\in\R^n:\;|\xi_i|\le 1/2\}.$$
Unless specified otherwise,  we will implicitly assume throughout the whole paper that  $n\ge 2$. We will write $A\sim B$ if $A\lesssim B$ and $B\lesssim A$. The implicit constants hidden inside the symbols  $\lesssim$ and $\sim$  will in general  depend on  fixed parameters such as $p$, $n$ and sometimes on  variable parameters such as  $\epsilon$, $\nu$. We will not record the dependence on the fixed parameters.

Let $\A_\delta$ be the $\delta$ neighborhood of $P^{n-1}$ and let $\P_\delta$ be a finitely overlapping cover of  $\A_\delta$ with curved regions $\theta$ of the form
\begin{equation}
\label{EE14}
\theta=\{(\xi_1,\ldots,\xi_{n-1},\eta+\xi_1^2+\ldots+\xi_{n-1}^2):\;(\xi_1,\ldots,\xi_{n-1})\in C_\theta,\;|\eta|\le 2\delta\},
\end{equation}
where $C_\theta$ runs over all cubes   $c+[-\frac{\delta^{1/2}}{2},\frac{\delta^{1/2}}{2}]^{n-1}$ with $c\in \frac{\delta^{1/2}}{2}\Z^{n-1}\cap [-1/2,1/2]^{n-1}$. Note that each $\theta$ sits inside a $\sim\delta^{1/2}\times \ldots\delta^{1/2}\times \delta$ rectangular box. It is also important to realize that the normals to these boxes are $\sim \delta^{1/2}$ separated. A similar decomposition exists for any  $S$ as above and we will use the same notation $\P_\delta$ for it. We will denote by $f_\theta$ the Fourier restriction of $f$ to $\theta$.

Our main result is the proof of the following {\em $l^2$ Decoupling Theorem}.
\begin{theorem}
\label{thmmm1}
Let $S$ be a compact $C^2$ hypersurface in $\R^n$  with  positive definite second fundamental form.
If $\supp(\hat{f})\subset \A_\delta$
then for $p\ge\frac{2(n+1)}{n-1}$ and $\epsilon>0$
\begin{equation}
\label{Snew26}
\|f\|_p\lesssim_\epsilon \delta^{-\frac{n-1}{4}+\frac{n+1}{2p}-\epsilon}(\sum_{\theta\in \P_\delta}\|f_\theta\|_p^2)^{1/2}.
\end{equation}

\end{theorem}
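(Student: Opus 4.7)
The plan is to establish \eqref{Snew26} by induction on the scale $\delta$, working at the critical exponent $p_c = \frac{2(n+1)}{n-1}$; the case $p > p_c$ then follows by interpolating the critical case against the trivial $L^\infty$ bound and summing via Cauchy-Schwarz in $\theta$. Letting $K_{p}(\delta)$ denote the smallest admissible constant in \eqref{Snew26}, the goal reduces to proving $K_{p_c}(\delta) \lesssim_\epsilon \delta^{-\epsilon}$ for every $\epsilon>0$.

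The engine I would use is a Bourgain--Guth broad/narrow decomposition at an auxiliary scale $K^{-1}\gg \delta^{1/2}$, where $K$ is a large parameter eventually sent to infinity. At each spatial point $x$, either (i) a single cap $\tau$ of size $K^{-1}$ dominates the sum, in which case parabolic rescaling on $\tau$ reduces the estimate inside $\tau$ to $l^2$ decoupling at the smaller scale $\delta K^2$ (where the induction hypothesis applies), or (ii) there exist $n$ caps $\tau_1,\ldots,\tau_n$ of size $K^{-1}$ with $\sim K^{-1}$-separated normals whose geometric mean $\prod_i|f_{\tau_i}(x)|^{1/n}$ controls $|f(x)|$. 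In the broad case, the multilinear restriction theorem of Bennett--Carbery--Tao supplies an almost sharp $L^{p_c}$ bound on $\prod_i|f_{\tau_i}|^{1/n}$ up to $R^\epsilon$ losses, for $R$ a large spatial scale.

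The remaining task is to upgrade the multilinear extension bound into the $l^2$-summed linear statement \eqref{Snew26}. My strategy is a multi-scale iteration: slicing the range $[\delta^{1/2},1]$ into many intermediate scales and alternating a broad/narrow step with a parabolic rescaling at each scale. Careful bookkeeping should yield a recursion of the schematic form
\begin{equation*}
K_{p_c}(\delta) \leq C_\epsilon K^{\epsilon} + K^{O(1)}\,K_{p_c}(\delta K^2)^{1-\eta}
\end{equation*}
for some $\eta>0$. Choosing $K$ a slowly growing function of $\delta^{-1}$ and iterating then produces the desired $\delta^{-\epsilon}$ bound.

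The main obstacle, I expect, lies in the interface between the multilinear estimate and the $l^2$ square function at the target exponent. The Bennett--Carbery--Tao inequality is most naturally placed at $L^{\frac{2n}{n-1}}$ on the extension side, whereas \eqref{Snew26} lives at $L^{\frac{2(n+1)}{n-1}}$; closing this gap at the $l^2$ level appears to require a lower-dimensional decoupling ingredient on affine slices, obtained by an induction on the dimension $n$ with base case $n=2$ coming from classical square-function estimates for the parabola. Keeping all losses at the $\delta^{-o(1)}$ level through the nested inductions on $n$ and $\delta$, while also tracking the dependence on the transversality parameter $K$, is where the real technical difficulty sits.
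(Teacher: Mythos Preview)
Your framework is right in spirit: Bourgain--Guth broad/narrow, induction on the dimension for the narrow contribution, and Bennett--Carbery--Tao for the broad contribution are indeed the structural pillars, and you correctly locate the real difficulty in the gap between $L^{\frac{2n}{n-1}}$ (where BCT is sharp) and $L^{p_c}$. The gap in your proposal is the recursion itself. One broad/narrow pass yields at best $K_{p_c}(\delta)\lesssim K_{p_c}(\delta K^2)+K^{O(1)}C_{p_c,n}(\delta,K^{-1})$, with no $(1-\eta)$ exponent on the inductive term and no $\delta^{-\epsilon}$ control on the multilinear constant $C_{p_c,n}$: a single application of BCT followed by H\"older to reach $p_c$ loses a genuine power of $N$. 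Your claim that BCT ``supplies an almost sharp $L^{p_c}$ bound'' on the geometric mean is exactly what fails, and ``careful bookkeeping should yield'' is hiding the missing idea rather than supplying it. The lower-dimensional decoupling you invoke is used in the paper only inside the broad/narrow step (to handle caps clustering near a lower-dimensional paraboloid); it does not bridge the exponent gap.

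What the paper does instead is run a \emph{separate} iteration, not on $K_{p}(\delta)$ directly but on a one-parameter family of multilinear mixed-norm inequalities (Proposition~\ref{propinv5}):
\[
\bigl\|(\textstyle\prod_i|\widehat{g_id\sigma}|)^{1/n}\bigr\|_{L^p(B_N)}\ \lesssim\ A_\beta(N)\,\prod_i\|\widehat{g_id\sigma}\|_{p,\delta,B_N}^{(1-\beta)/n}\,\|\widehat{g_id\sigma}\|_{\frac{p(n-1)}{n},\delta,B_N}^{\beta/n},
\]
stepping $\beta\mapsto\beta\xi$ with $\xi=\frac{2}{(p-2)(n-1)}$ and $A_{\beta\xi}(N)=A_\beta(N^{1/2})N^{\beta\eta+\frac{\gamma}{2}(1-\beta\xi)}$. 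The decisive feature is that BCT is applied \emph{at every step} of this iteration (via~\eqref{inv6}), not just once: after localizing to $N^{1/2}$-balls and H\"older-splitting the $\|\cdot\|_{\frac{p(n-1)}{n}}$ norm, an $L^2$ factor appears, and a second use of multilinear restriction resums it over the small balls. The norm interpolation $\|f\|_2^{2n/((n-1)p)}\|f\|_{\infty,\delta}^{1-2n/((n-1)p)}\sim\|f\|_{p(n-1)/n,\delta}$ needed to launch this iteration (Proposition~\ref{invprop1}) relies on the balanced wave-packet machinery of Section~\ref{s3}, which your sketch also omits. One works in the open range $p>p_c$ where $\xi<\tfrac12$, so the $\beta$-iteration converges; comparing the outcome with the linear$\,\approx\,$multilinear equivalence (Theorem~\ref{ch ft7wbtfgb6n17r782brym9,iqmivpk[l}) forces $\alpha=0$, and the endpoint $p=p_c$ is then recovered by H\"older and Bernstein.
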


Theorem \ref{thmmm1} has been proved in \cite{GS} for $p>2+\frac8{n-1}-\frac4{n(n-1)}$. A standard construction is presented in  \cite{GS} to show that, up to the $\delta^{-\epsilon}$ term, the exponent of $\delta$ is optimal.
We point out that Wolff \cite{TWol} has initiated the study of $l^p$  decouplings, $p>2$ in the case of the cone.
His work provides part of the inspiration for our paper.

A localization argument and interpolation between $p=\frac{2(n+1)}{n-1}$ and the trivial bound for $p=2$  proves the subcritical estimate
\begin{equation}
\label{jdfgy78rtugopriguurtgpergotgi[wqxp[erirq0-wp-=ws0rf}
\|f\|_p\lesssim_\epsilon \delta^{-\epsilon}(\sum_{\theta\in \P_\delta}\|f_\theta\|_p^2)^{1/2},
\end{equation}
when $2\le p<\frac{2(n+1)}{n-1}$. Estimate \eqref{jdfgy78rtugopriguurtgpergotgi[wqxp[erirq0-wp-=ws0rf} is false for $p<2$. This can easily be seen by testing it with functions of the form $f_\theta(x)=g_\theta(x+c_\theta)$, where $\supp(\widehat{g_\theta})\subset \theta$ and the numbers $c_\theta$ are very far apart from each other.

Inequality \eqref{jdfgy78rtugopriguurtgpergotgi[wqxp[erirq0-wp-=ws0rf} has been recently proved by the first author for $p=\frac{2n}{n-1}$ in \cite{Bo2}, using a variant of the induction on scales from \cite{BG} and the multilinear restriction Theorem \ref{invthm1}.

An argument similar to the one in
 \cite{GS} was used in \cite{Dem23} to prove Theorem \ref{thmmm1} for $p>\frac{2(n+2)}{n-1}$, by interpolating Wolff's machinery with the estimate $p=\frac{2n}{n-1}$ from \cite{Bo2}. This  range is better than the one in  \cite{GS} due to the use of multilinear theory as opposed to  bilinear theory.\footnote{While both the bilinear theorem in \cite{Tao} and the multilinear theorem in \cite{BCT} are sharp, the latter one is "morally" stronger}

We mention briefly that there is a stronger form of decoupling, sometimes referred to as {\em square function estimate}, which predicts that
\begin{equation}
\label{kjfehrfyryufjhfuygyrufdhcblkskdiopweru}
\|f\|_p\lesssim_\epsilon \delta^{-\epsilon}\|(\sum_{\theta\in \P_\delta}|f_\theta|^2)^{1/2}\|_p,
\end{equation}
in the slightly smaller range $2\le p\le \frac{2n}{n-1}$. When $n=2$ this easily follows via a geometric argument. Minkowski's inequality shows that \eqref{kjfehrfyryufjhfuygyrufdhcblkskdiopweru} is indeed stronger than  \eqref{jdfgy78rtugopriguurtgpergotgi[wqxp[erirq0-wp-=ws0rf} in the range $2\le p\le \frac{2n}{n-1}$. This  is also confirmed by the lack of any results for \eqref{kjfehrfyryufjhfuygyrufdhcblkskdiopweru} when $n\ge 3$. Our methods do not seem to enable any progress on \eqref{kjfehrfyryufjhfuygyrufdhcblkskdiopweru}.

It is reasonable to hope that in the subcritical regime \eqref{jdfgy78rtugopriguurtgpergotgi[wqxp[erirq0-wp-=ws0rf}  one may be able to replace  $\delta^{-\epsilon}$  by a constant $C_{p,n}$ independent of $\delta$. This is indeed known  when $n=2$ and $p\le 4$, but seems to be in general an extremely difficult question. To the authors' knowledge, no other examples of $2<p<\frac{2(n+1)}{n-1}$ are known for when this holds.

In Section \ref{s10} we introduce a multilinear version of the decoupling inequality \eqref{Snew26} and show that the multilinear and the linear theories are essentially equivalent. This in itself is not enough to prove Theorem \ref{thmmm1}, as  Theorem \ref{invthm1} gives multilinear decoupling only in the range $2\le p\le \frac{2n}{n-1}$.
To bridge the gap between $\frac{2n}{n-1}$ and $\frac{2(n+1)}{n-1}$, in Section \ref{s8} we refine our analysis based on the multilinear theory. In particular we set up an induction on scales argument that makes use of  Theorem \ref{invthm1} at each step of the iteration, rather than once.

Let us now briefly describe some of the consequences of Theorem \ref{thmmm1}. The first one we mention is a sharp decoupling for the (truncated) cone
$$C^{n-1}=\{(\xi_1,\ldots,\xi_{n-1}, \sqrt{\xi_1^2+\ldots+\xi_{n-1}^2}),\;1\le \sqrt{\xi_1^2+\ldots+\xi_{n-1}^2}\le 2\}.$$
Abusing earlier notation, we let $\A_\delta(C^{n-1})$ be the $\delta$ neighborhood of $C^{n-1}$ and we let $\P_\delta(C^{n-1})$ be the partition of $\A_\delta(C^{n-1})$ associated with a given partition of $S^{n-1}$ into $\delta^{1/2}$ caps. More precisely, each $\theta\in \P_\delta(C^{n-1})$ is essentially a $1\times \delta\times \delta^{1/2}\times \ldots\times \delta^{1/2}$ rectangular box.

\begin{theorem}
\label{thmmm1cone}
Assume $\supp(\hat{f})\subset \A_\delta(C^{n-1})$.
Then for each $\epsilon>0$
$$
\|f\|_p\lesssim_\epsilon \delta^{-\frac{n-2}{4}+\frac{n}{2p}-\epsilon}(\sum_{\theta\in \P_\delta(C^{n-1})}\|f_\theta\|_p^2)^{1/2},\;\;\text{if }\; p\ge\frac{2n}{n-2}
$$
and
$$
\|f\|_p\lesssim_\epsilon \delta^{-\epsilon}(\sum_{\theta\in \P_\delta(C^{n-1})}\|f_\theta\|_p^2)^{1/2},\;\;\text{if }\; 2\le p\le\frac{2n}{n-2}.
$$
\end{theorem}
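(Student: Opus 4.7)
The plan is to deduce Theorem \ref{thmmm1cone} from Theorem \ref{thmmm1} applied to the paraboloid $P^{n-2}$ in $\R^{n-1}$ (which we may assume holds). The striking match of exponents supports this reduction: the critical cone exponent $p=\frac{2n}{n-2}$ equals $\frac{2((n-1)+1)}{(n-1)-1}$, the critical paraboloid exponent in $\R^{n-1}$, and the loss $-\frac{n-2}{4}+\frac{n}{2p}$ agrees with $-\frac{(n-1)-1}{4}+\frac{(n-1)+1}{2p}$. The geometric reason is that generic hyperplane slices of the cone are paraboloids in one fewer dimension.

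First I would localize. By a finite partition of unity on the angular $(n-2)$-sphere and the rotational symmetries of the cone, I reduce to the case where $\hat f$ is supported in a thin angular sector with $\xi_{n-1}\sim 1$ and $|\xi_i|\ll 1$ for $i\le n-2$. Introducing cone-adapted coordinates $\eta_i=\xi_i$ for $i\le n-2$, $\eta_{n-1}=\xi_n-\xi_{n-1}$, $\eta_n=\xi_n+\xi_{n-1}$, the cone equation becomes $\eta_{n-1}\eta_n=|\eta'|^2$; for each fixed $\eta_n\sim 1$ this is a (rescaled) paraboloid in the $(n-1)$-dimensional $(\eta',\eta_{n-1})$-space, and the plates $\theta\in\P_\delta(C^{n-1})$ correspond to $\delta^{1/2}$-caps of this paraboloid, extended cylindrically in the $\eta_n$-direction.

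Next I would partition the $\eta_n$-range into intervals of length $\sim\delta$, giving slabs $S_k$; inside each, the cone varies by only $O(\delta)$ in $\eta_{n-1}$ and hence lies in a $\delta$-neighborhood of a single paraboloid in $\R^{n-1}$. Applying Theorem \ref{thmmm1} in dimension $n-1$ at scale $\delta$ to the slab piece $f_k$ gives an $\ell^2$-decoupling of $f_k$ into the truncated plates $\theta\cap S_k$, with the correct loss $\delta^{-\epsilon}$ at the matching critical exponent. I would then reassemble the pieces across $k$ to recover the global estimate.

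The main obstacle is this reassembly. Because $\eta_n$ is the null generator direction of the cone, $\ell^2$-orthogonality across slabs is not automatic, and a naive triangle-inequality aggregation over $\sim\delta^{-1}$ slabs would lose a polynomial factor. I would handle this by exploiting the cone's translation invariance in $\eta_n$ together with an induction-on-scales argument that iterates the paraboloid decoupling between scales $\delta$ and $\delta^{1/2}$, so that the aggregation loss is absorbed into the $\delta^{-\epsilon}$ slack. The subcritical range $2\le p\le\frac{2n}{n-2}$ then follows from the critical case by interpolation with the trivial $p=2$ estimate, which is immediate from Plancherel and the bounded overlap of the plates $\theta$.
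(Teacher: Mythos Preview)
Your reduction of the cone in $\R^n$ to the paraboloid $P^{n-2}$ in $\R^{n-1}$ is the right starting point, and your cone-adapted coordinates are essentially the paper's. But the execution diverges at a crucial point, and the gap you yourself flag is not closed by what you propose.

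You slice the null direction $\eta_n$ into $\sim\delta^{-1}$ intervals of length $\delta$ and apply paraboloid decoupling on each slab. The reassembly over $\delta^{-1}$ slabs is then the whole difficulty, and ``translation invariance in $\eta_n$ together with an induction on scales'' is not a mechanism: translation invariance along the null generator does not produce any $\ell^2$ orthogonality in $L^p$ for $p>2$, and an iteration between scales $\delta$ and $\delta^{1/2}$ still leaves $\delta^{-1/2}$ slabs to sum at each step, which is again a polynomial loss. Nothing in your outline explains why the aggregation costs less than $\delta^{-c}$ for some fixed $c>0$.

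The paper avoids slicing the null direction altogether. It introduces a free parameter $\nu>0$ and first uses Cauchy--Schwarz to pass to sectors of radial length $N^{-\nu}$ and aperture $N^{-\nu/2}$, at a loss of only $N^{O(\nu)}$. On such a short sector the key device is a Fubini-type identity: for fixed $x$ one writes
\[
\|F\|_{L^p(w_{B_N})}\sim N^{-1/p}\big\|\,\|F(x_1-y_{n},x_2+y_2,\dots,x_n+y_n)\|_{L^p_{y}(w_{[-N,N]^{n-1}})}\,\big\|_{L^p_x(w_{B_N})},
\]
so that the inner $L^p_y$ norm sees, after the change of variables $(\xi',\xi_{n-1})\mapsto(\xi',\eta)$ with $\eta=|\xi|-\xi_{n-1}$, a function Fourier-supported in a $\delta$-neighborhood of a genuine paraboloid in $\R^{n-1}$. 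One then applies Theorem \ref{thmmm1} in these auxiliary $y$-variables, which refines the aperture by a factor $N^{-\nu/2}$ with only an $N^{\epsilon}$ loss; iterating $O(1/\nu)$ times reaches aperture $N^{-1/2}$. The final radial reassembly (from length $N^{-\nu}$ back to length $1$) is trivial because the $\delta$-neighborhood of a single plate is essentially a box, costing another $N^{O(\nu)}$. Sending $\nu\to 0$ absorbs all losses into $\delta^{-\epsilon}$. The point is that both lossy steps are controlled by the \emph{adjustable} parameter $\nu$, whereas your slab count $\delta^{-1}$ is fixed.
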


The proof of Theorem \ref{thmmm1cone} is presented in the last section of the paper and it turns out to be a surprisingly short application of Theorem \ref{thmmm1} for the elliptic paraboloid. It has some striking consequences, some of which were described in (and provided some of the original motivation for) the work \cite{TWol} of Wolff. Examples include progress on the ``local smoothing conjecture for the wave equation" (see \cite{Sog} and \cite{TWol}), the regularity for convolutions with arclength measures on helices \cite{PrSe}, and the boundedness
properties of the Bergman projection in tube domains over full light cones, see \cite{GarSe2} and \cite{BBGR}. We refer the interested reader to these papers for details.
\medskip

Theorem \ref{thmmm1} immediately implies the validity of the Discrete Restriction Conjecture in the expected range, see Theorem \ref{thmmmm2} below. This in turn has a wide range of interesting consequences that are detailed in Section \ref{s2}. First, we get the full range of expected $L^p_{x,t}$ Strichartz estimates for both the rational and (up to $N^\epsilon$ losses) irrational tori.
Second, we derive sharp estimates on the additive energies of various sets. These can be rephrased as incidence geometry problems and in some cases we are not aware about an alternative approach. While our theorems successfully address the case of "nicely separated" points, some intriguing questions are left open for arbitrary points.

A third type of applications includes sharp (up to $N^\epsilon$ losses) estimates for the number of solutions of various diophantine inequalities. This is rather surprising given the fact that our methods do not rely on any number theory. We believe that they provide a new angle by means of our  use of induction on scales and the topology of $\R^n$. Indeed, the Multilinear Restriction Theorem \ref{invthm1} that we use repeatedly in the proof of our main Theorem \ref{thmmm1} relies at its core on the multilinear Kakeya phenomenon, which has some topological flavor (see \cite{Guth}, \cite{Carb}).

Finally, we use Theorem \ref{thmmmm2} to improve the range from \cite{BD1}, \cite{BD2} in the discrete restriction problem for lattice points on the sphere.

In forthcoming papers we will develop the decoupling theory for arbitrary hypersurfaces with nonzero Gaussian curvature, as well as for nondegenerate curves.

\begin{ack*}
The authors are  indebted to the anonymous referees whose comments helped improving the presentation in Section 6.
The second author has benefited from helpful conversations with  Nets Katz and Andreas Seeger. The second author would like to thank his student Fangye Shi for a careful reading of the original version of the manuscript and for pointing out a few typos.
\end{ack*}

\bigskip
\section{First applications}
\label{s2}
In this section we present the first round of applications of our decoupling theory. Additional applications will appear elsewhere.

\subsection{The discrete restriction phenomenon}
\label{s21}
\bigskip
 To provide some motivation we recall the Stein-Tomas  Restriction Theorem, see \cite{SteTom}.
\begin{theorem}
Let  $S$ be  a compact $C^2$ hypersurface in $\R^n$ with nonzero Gaussian curvature and let  $d\sigma$ denote the natural surface measure on $S$. Then for $p\ge \frac{2(n+1)}{n-1}$ and $f\in L^2(S,d\sigma)$ we have
$$\|\widehat{fd\sigma}\|_{L^p(\R^n)}\lesssim\|f\|_{L^2(S)}.$$
\end{theorem}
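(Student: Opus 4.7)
The plan is to follow the classical Stein--Tomas argument. First I would use duality and a $TT^*$ reduction: by Plancherel and duality, the stated restriction inequality $\|\widehat{fd\sigma}\|_{L^p}\lesssim\|f\|_{L^2(S)}$ is equivalent to the extension inequality $\|\widehat{g}|_S\|_{L^2(d\sigma)}\lesssim\|g\|_{L^{p'}}$, and composing the extension operator with its adjoint turns this into the convolution estimate
$$\|\widehat{d\sigma}\ast g\|_{L^p(\R^n)}\lesssim\|g\|_{L^{p'}(\R^n)}.$$
Thus the whole problem is to bound convolution by the single kernel $K:=\widehat{d\sigma}$ from $L^{p'}$ to $L^p$.

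Next, curvature would enter through the single analytic ingredient $|\widehat{d\sigma}(x)|\lesssim(1+|x|)^{-(n-1)/2}$, obtained via stationary phase after a smooth partition of unity that reduces $S$ to finitely many graph pieces with positive definite Hessian. I would then decompose dyadically, $K=\sum_{j\ge 0}K_j$ with $K_j$ a smooth truncation of $K$ to $|x|\sim 2^j$. Each $K_j$ satisfies $\|K_j\|_\infty\lesssim 2^{-j(n-1)/2}$, so convolution with $K_j$ maps $L^1\to L^\infty$ with this norm, while $\widehat{K_j}$ is essentially $d\sigma$ smeared into a $2^{-j}$-neighborhood of $S$ and is therefore bounded by $\lesssim 2^{j}$, giving an $L^2\to L^2$ bound of $2^{j}$ by Plancherel.

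Riesz--Thorin interpolation then produces an $L^{p'}\to L^p$ norm for convolution with $K_j$ of order $2^{j[(n+1)/p-(n-1)/2]}$, which is geometrically summable in $j$ precisely when $p>\tfrac{2(n+1)}{n-1}$, already proving the theorem in that open range. The endpoint $p=\tfrac{2(n+1)}{n-1}$ is the main obstacle, since the geometric series becomes logarithmically divergent. To overcome this I would invoke Stein's analytic interpolation theorem, embedding the operator into an analytic family $T_z$ built from complex powers of the defining function of $S$ (equivalently a Bochner--Riesz-type weight supported near $S$), arranged so that at $\operatorname{Re}(z)=1$ one has uniform $L^2\to L^2$ bounds from boundedness of $\widehat{K_z}$, at $\operatorname{Re}(z)=-(n-1)/2$ one has uniform $L^1\to L^\infty$ bounds coming from the stationary-phase decay, and at $z=0$ one recovers convolution by $\widehat{d\sigma}$. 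Verifying admissible exponential growth of these norms in $\operatorname{Im}(z)$ is the delicate but routine step that makes Stein's theorem applicable and yields the critical exponent.
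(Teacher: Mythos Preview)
Your outline is a correct rendition of the classical Stein--Tomas argument (with Stein's complex interpolation for the endpoint), and there is no genuine gap. One small slip: after localizing $S$ to graph pieces you should say the Hessian is \emph{nondegenerate}, not necessarily positive definite---the hypothesis is nonzero Gaussian curvature, which allows mixed signature; stationary phase still yields the $(1+|x|)^{-(n-1)/2}$ decay in that generality.

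However, there is nothing to compare against: the paper does not prove this theorem. It is quoted as background (``we recall the Stein--Tomas Restriction Theorem, see \cite{SteTom}'') solely to motivate the discrete restriction phenomenon, and the paper immediately moves on to the equivalent formulation in terms of exponential sums at scale $R\sim\delta^{-1/2}$. The paper's own results (Theorems~\ref{thmmm1} and \ref{thmmmm2}) concern the stronger decoupling/discrete restriction estimates at scale $R\gtrsim\delta^{-1}$, proved by an entirely different mechanism (multilinear restriction plus induction on scales). So your write-up is fine as a self-contained proof of the cited result, but it is not a proof that the paper itself undertakes.
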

Note that this result only needs nonzero Gaussian curvature. We will use the notation $e(a)=e^{2\pi i a}$. For fixed $p\ge \frac{2(n+1)}{n-1}$, it is an easy exercise to see that this theorem is equivalent with the statement that
$$(\frac{1}{|B_R|}\int_{B_R}|\sum_{\xi\in \Lambda}a_\xi e(\xi \cdot x)|^{p})^{1/p}\lesssim \delta^{\frac{n}{2p}-\frac{n-1}{4}}\|a_\xi\|_{l^2(\Lambda)},$$
for each $0\le \delta\le 1$, each $a_\xi\in\C$, each ball $B_R\subset \R^n$ of radius $R\sim\delta^{-1/2}$  and each $\delta^{1/2}$ separated set $\Lambda\subset S$.  Thus, the Stein-Tomas  Theorem measures the average $L^p$ oscillations of exponential sums at spatial scale equal to the inverse of the separation of the frequencies. It will be good to keep in mind that for each $R\gtrsim \delta^{-1/2}$
\begin{equation}
\label{EE43}
\|\sum_{\xi\in \Lambda}a_\xi e(\xi \cdot x)\|_{L^2(B_R)}\sim |B_R|^{1/2}\|a_\xi\|_2,
\end{equation}
as can be seen using Plancherel's Theorem.

The discrete restriction phenomenon consists in the existence of stronger cancellations  at the larger scale $R\gtrsim \delta^{-1}$. We prove the following.
\begin{theorem}
\label{thmmmm2}Let $S$ be a compact $C^2$ hypersurface in $\R^n$  with  positive definite second fundamental form.
Let $\Lambda\subset S$ be a  $\delta^{1/2}$- separated set and let $R\gtrsim \delta^{-1}$. Then for each $\epsilon>0$
\begin{equation}
\label{EE49}
(\frac{1}{|B_R|}\int_{B_R}|\sum_{\xi\in \Lambda}a_\xi e(\xi \cdot x)|^{p})^{1/p}\lesssim_{\epsilon} \delta^{\frac{n+1}{2p}-\frac{n-1}{4}-\epsilon}\|a_\xi\|_2
\end{equation}
if $p\ge \frac{2(n+1)}{n-1}$.
\end{theorem}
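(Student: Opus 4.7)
The plan is to deduce Theorem \ref{thmmmm2} directly from the $l^2$ Decoupling Theorem \ref{thmmm1}, by choosing a smooth weight adapted to the ball $B_R$ that turns the exponential sum into a function with Fourier support in $\A_\delta$.

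First I would fix a Schwartz function $\psi$ with $\psi(0)=1$, $|\psi|\gtrsim 1$ on the unit ball, and $\hat\psi$ supported in a small ball around the origin (of radius much less than $1$). Set $\phi(x)=\psi(x/R)$ so that $|\phi|\gtrsim 1$ on $B_R$, $\|\phi\|_p\sim R^{n/p}$, and $\hat\phi$ is a Schwartz function concentrated in a ball of radius $\sim 1/R$. Define
$$f(x)=\phi(x)\sum_{\xi\in\Lambda}a_\xi e(\xi\cdot x),\qquad \text{so}\quad \hat f(\eta)=\sum_{\xi\in\Lambda}a_\xi\hat\phi(\eta-\xi).$$
Since $R\gtrsim\delta^{-1}$, the scale $1/R\lesssim\delta$ of $\hat\phi$ is much smaller than the separation $\delta^{1/2}$ of $\Lambda$, and it is also small enough that $\hat f$ is supported (up to harmless rapidly decaying tails) in $\A_{C\delta}$ for some absolute constant $C$. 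By trivial rescaling of $\delta$ we may treat this as support in $\A_\delta$.

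Next I would apply Theorem \ref{thmmm1} to $f$ for $p\geq \frac{2(n+1)}{n-1}$:
$$\|f\|_p\lesssim_\epsilon \delta^{-\frac{n-1}{4}+\frac{n+1}{2p}-\epsilon}\Bigl(\sum_{\theta\in\P_\delta}\|f_\theta\|_p^2\Bigr)^{1/2}.$$
Because $\Lambda$ is $\delta^{1/2}$-separated and each $\theta$ is contained in a box of base $\sim\delta^{1/2}$, each cap $\theta$ meets $\Lambda$ in at most one point $\xi_\theta$; and because $\hat\phi$ has Fourier support much smaller than $\delta^{1/2}$, the piece $\hat f$ sitting in $\theta$ is essentially the single term $a_{\xi_\theta}\hat\phi(\eta-\xi_\theta)$. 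Hence $f_\theta(x)=a_{\xi_\theta}e(\xi_\theta\cdot x)\phi(x)$, which gives $\|f_\theta\|_p=|a_{\xi_\theta}|\,\|\phi\|_p\sim |a_{\xi_\theta}|\,R^{n/p}$, and summing,
$$\Bigl(\sum_\theta \|f_\theta\|_p^2\Bigr)^{1/2}\sim R^{n/p}\|a_\xi\|_{\ell^2(\Lambda)}.$$

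Finally, since $|\phi|\gtrsim 1$ on $B_R$, the left-hand side of \eqref{EE49} is bounded by $|B_R|^{-1/p}\|f\|_p$, and $|B_R|^{1/p}\sim R^{n/p}$ cancels the $R^{n/p}$ above, leaving exactly the claimed $\delta^{\frac{n+1}{2p}-\frac{n-1}{4}-\epsilon}\|a_\xi\|_2$ bound. The only real point to watch is the book-keeping of the Schwartz tails of $\hat\phi$; this is a standard soft argument (either truncate $\hat\psi$ or absorb the tails using rapid decay), so it should not be the main obstacle — in fact, given Theorem \ref{thmmm1}, the whole derivation is essentially a transference argument with no serious new input.
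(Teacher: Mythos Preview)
Your argument is correct and follows essentially the same transference idea as the paper: the paper multiplies by a weight $w_{B_R}$ with Fourier support in $B(0,1/R)$ to pass to the localized extension-operator inequality \eqref{EE45}, then inserts $g=\sum_{\xi}a_\xi\,\sigma(U(\xi,\tau))^{-1}1_{U(\xi,\tau)}$ and lets $\tau\to 0$, while you carry out both steps at once with the single cutoff $\phi$. The only trivial correction is that a cap $\theta$ may contain $O(1)$ (not literally at most one) points of the $\delta^{1/2}$-separated set $\Lambda$, which is harmlessly absorbed by Cauchy--Schwarz.
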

It has been observed in \cite{Bo2} that Theorem \ref{thmmm1} for a given $p$ implies \eqref{EE49} for the same $p$. Here is a sketch of the argument. First, note that the statement
$$\|f\|_p\lesssim \delta^{c_p}(\sum_{\theta\in \P_\delta}\|f_\theta\|_p^2)^{1/2},\;\text{whenever }\supp(\hat f)\subset \A_\delta$$
easily implies that for each $g:S\to\C$ and $R\gtrsim \delta^{-1}$
\begin{equation}
\label{EE45}
(\int_{B_R}|\widehat{gd\sigma}|^p)^{1/p}\lesssim \delta^{c_p}(\sum_{\theta\in \P_\delta}\|\widehat{g_\theta d\sigma}\|_{L^p(w_{B_{R}})}^2)^{1/2},
\end{equation}
where  here $g_\theta=g1_\theta$ is the restriction of $g$ to the $\delta^{1/2}$- cap $\theta$ on $S$. See Remark \ref{gfhgdhfghdgfhdgfhdgfhertey8we837ofhgh;kjojhiop[uolip[mlo0eru89u90e98j7897989}. Also, throughout  the paper we write
$$\|f\|_{L^p(w_{B_{R}})}=(\int_{\R^n}|f(x)|^pw_{B_{R}}(x)dx)^{1/p}$$
for weights $w_{B_R}$ which are Fourier supported in $B(0,\frac{1}{R})$ and satisfy
\begin{equation}
\label{wnret78u-0943mt7-w,-,1ir8gnrnfyqerbtf879wumweryermf,}
1_{B_R}(x)\lesssim w_{B_R}(x)\le(1+\frac{|x-c(B_R)|}{R})^{-10n}.
\end{equation}
It now suffices to use $g=\sum_{\xi\in\Lambda}a_\xi \sigma(U(\xi,\tau))^{-1}1_{U(\xi,\tau)}$ in \eqref{EE45}, where $U(\xi,\tau)$ is a $\tau$- cap on $S$ centered at $\xi$, and to let $\tau\to 0$.

Using \eqref{EE49} with $p=\frac{2(n+1)}{n-1}$ and H\"older's inequality we determine that
\begin{equation}
\label{eqinv2}
\delta^{\epsilon}\|a_\xi\|_2\lesssim_\epsilon (\frac{1}{|B_R|}\int_{B_R}|\sum_{\xi\in \Lambda}a_\xi e(\xi \cdot x)|^{p})^{1/p}\lesssim_{\epsilon} \delta^{-\epsilon}\|a_\xi\|_2
\end{equation}
for $1\le p\le \frac{2(n+1)}{n-1}$ and $R\gtrsim \delta^{-1}.$ We mention that prior to our current work, the only known results for \eqref{EE49} and \eqref{eqinv2} were the ones in the range where Theorem \ref{thmmm1} was known.
\bigskip

\bigskip

\subsection{Strichartz estimates for the classical and irrational tori}
\bigskip

The discrete restriction phenomenon has mostly been investigated in the special case when the frequency points  $\Lambda$ come from a lattice. There is extra motivation in considering this case coming from PDEs, where there is interest in establishing Strichartz estimates for the Schr\"odinger equation on the torus. Prior to the current work, the best known result for the paraboloid $$P^{n-1}(N):=\{\xi:=(\xi_1,\ldots,\xi_n)\in\Z^n:\;\xi_n=\xi_1^2+\ldots+\xi_{n-1}^2,\;|\xi_1|,\dots,|\xi_{n-1}|\le N\}$$
 was obtained by the first author \cite{Bo3}, \cite{Bo2}. We recall this result below.
\begin{theorem}[Discrete restriction: the lattice case (paraboloid)]
\label{rkjgtugfm45nbv 907u90[mq9tu8}
Let $a_\xi\in\C$ and  $\epsilon>0$. Then
\begin{enumerate}
\item[$(i)$]
if $n\ge 4$  we have
$$
\|\sum_{\xi\in P^{n-1}(N)}a_\xi e(\xi\cdot  x)\|_{L^p(\T^n)}\lesssim_\epsilon N^{\frac{n-1}{2}-\frac{n+1}{p}+\epsilon}\|a_\xi\|_{l^2},
$$
for  $p\ge \frac{2(n+2)}{n-1}$ and
$$
\|\sum_{\xi\in P^{n-1}(N)}a_\xi e(\xi\cdot  x)\|_{L^p(\T^n)}\lesssim_\epsilon N^{\epsilon}\|a_\xi\|_{l^2},
$$
for $1\le p\le \frac{2n}{n-1}$.

\item[$(ii)$] If $n=2,3$ then
$$
\|\sum_{\xi\in P^{n-1}(N)}a_\xi e(\xi\cdot  x)\|_{L^p(\T^n)}\lesssim_\epsilon N^{\epsilon}\|a_\xi\|_{l^2},
$$
for $p=\frac{2(n+1)}{n-1}$.
\end{enumerate}
\end{theorem}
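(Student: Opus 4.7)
The strategy is a standard transference from the torus $\T^n$ to a large ball in $\R^n$, followed by applying the discrete restriction bound in Theorem \ref{thmmmm2}. Rescale the frequency via $\xi \mapsto \xi' := (\xi_1/(2N),\ldots,\xi_{n-1}/(2N),\xi_n/(4N^2))$, so that $\Lambda := \{\xi' : \xi \in P^{n-1}(N)\}$ is a $\frac{1}{2N}$-separated subset of the unit paraboloid $P^{n-1}$; this is a $\delta^{1/2}$-separated set with $\delta := (2N)^{-2}$, and each cap $\theta \in \P_\delta$ contains $O(1)$ elements of $\Lambda$. Dually set $y := (2Nx_1,\ldots,2Nx_{n-1},4N^2 x_n)$, so that $e(\xi\cdot x) = e(\xi'\cdot y)$; the function $F(y) := \sum_{\xi'\in\Lambda} a_{\xi'} e(\xi'\cdot y)$ is then periodic with period $(2N,\ldots,2N,4N^2)$. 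Whenever $R \gtrsim \delta^{-1} = 4N^2$, a ball $B_R$ contains at least one full period in every direction, so by periodicity
$$\Big(\frac{1}{|B_R|}\int_{B_R} |F|^p\,dy\Big)^{1/p} \sim \Big\|\sum_{\xi\in P^{n-1}(N)} a_\xi e(\xi\cdot x)\Big\|_{L^p(\T^n)}.$$

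Applying Theorem \ref{thmmmm2} with this choice of $\delta$ and $R$ then yields
$$\Big\|\sum_{\xi} a_\xi e(\xi\cdot x)\Big\|_{L^p(\T^n)} \lesssim_\epsilon N^{\frac{n-1}{2}-\frac{n+1}{p}+\epsilon}\|a_\xi\|_{2}$$
for every $p \ge \frac{2(n+1)}{n-1}$. This subsumes the first bound in (i), whose stated range $p\ge\frac{2(n+2)}{n-1}$ is contained in the range above, and it covers both bounds in (ii) as well, since $\frac{2(n+1)}{n-1}$ equals $6$ when $n=2$ and $4$ when $n=3$. The subcritical range $1\le p\le\frac{2n}{n-1}$ in part (i) follows from the same transference applied instead to the subcritical $l^2$ decoupling \eqref{jdfgy78rtugopriguurtgpergotgi[wqxp[erirq0-wp-=ws0rf} at $p=\frac{2n}{n-1}$ (due to \cite{Bo2}), with the range $1\le p\le 2$ reduced to this via Parseval and Jensen's inequality.

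The main technical issue in the transference step is that at the borderline scale $R=\delta^{-1}$ the ball $B_R$ captures only $O(1)$ periods of $F$ in the slowest direction $y_n$, so one must verify that boundary contributions to the ball average are negligible; this is standard, e.g. by smoothing $1_{B_R}$ into a weight $w_{B_R}$ of the type in \eqref{wnret78u-0943mt7-w,-,1ir8gnrnfyqerbtf879wumweryermf,} or by averaging $B_R$ over translates, losing only an absolute constant. As an alternative route for the two endpoints in (ii) that predates any decoupling technology, one can expand $\|\sum_\xi a_\xi e(\xi\cdot x)\|_{L^p(\T^n)}^p$ directly into a count of $(p/2)$-tuples in $P^{n-1}(N)$ satisfying both a linear and a quadratic additive identity, then close the bound using classical divisor estimates for representations of integers as sums of two or three squares, as in Bourgain \cite{Bo3}; the $n=2$, $p=6$ case is the more delicate, requiring the three-square divisor bound.
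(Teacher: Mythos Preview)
Your transference argument is correct and is precisely the mechanism the paper uses one theorem later to deduce Theorem~\ref{thmmmm3} from Theorem~\ref{thmmmm2}. However, you should be aware that the paper does not actually \emph{prove} Theorem~\ref{rkjgtugfm45nbv 907u90[mq9tu8}: it is recalled as a previously known result of Bourgain~\cite{Bo3},~\cite{Bo2}, stated in order to highlight that the paper's Theorem~\ref{thmmmm2} supersedes it. The only ``proof'' the paper offers is the two-sentence summary immediately following the statement: part~(i) was originally obtained by combining a circle-method implementation of the Stein--Tomas argument with the $p=\tfrac{2n}{n-1}$ decoupling inequality of~\cite{Bo2}, and part~(ii) used the divisor bound for lattice points on circles. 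Your route instead deduces the theorem as an immediate corollary of the paper's new Theorem~\ref{thmmmm2}; this is logically valid (indeed stronger, since you get the full range $p\ge\tfrac{2(n+1)}{n-1}$ in all dimensions), but it reverses the historical order and would render the paragraph that follows---``It has been conjectured in~\cite{Bo3} that~(ii) should hold for $n\ge 4$, too\ldots our Theorem~\ref{thmmmm2} implies this conjecture''---redundant. Your final paragraph does correctly identify the classical approach to~(ii); one small correction is that both the $n=2$, $p=6$ and the $n=3$, $p=4$ cases go through the \emph{two}-square divisor bound (lattice points on a circle), via the observation that additive tuples on the paraboloid project to concyclic points in the plane---this is exactly the mechanism the paper revisits in the proof of Theorem~\ref{thmmmm4}.
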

The proof of (i) combines the implementation of the Stein-Tomas argument via the circle method with the inequality \eqref{jdfgy78rtugopriguurtgpergotgi[wqxp[erirq0-wp-=ws0rf} proved in \cite{Bo2}.
 The argument for (ii) is much easier, it uses the fact that circles in the plane contain "few" lattice points. It has been conjectured in \cite{Bo3} that (ii) should hold for $n\ge 4$, too. This is easily seen to be sharp, up to the $N^\epsilon$ term. We will argue below that our Theorem \ref{thmmmm2} implies this conjecture, in fact a more general version of it.

The analogous question for the more general irrational tori has been recently investigated in \cite{Bo4}, \cite{CWW}, \cite{demirb} and \cite{GOW}. More precisely, fix $\frac1{2}<\theta_1,\ldots,\theta_{n-1}<2$. For $\phi\in L^2(\T^{n-1})$  consider its Laplacian
$$\Delta \phi(x_1,\ldots,x_{n-1})=$$
$$\sum_{(\xi_1,\ldots,\xi_{n-1})\in\Z^{n-1}}(\xi_1^2\theta_1+\ldots+\xi_{n-1}^2\theta_{n-1})\hat{\phi}(\xi_1,\ldots,\xi_{n-1})e(\xi_1x_1+\ldots+\xi_{n-1}x_{n-1})$$
on the irrational torus $\prod_{i=1}^{n-1}\R/(\theta_i\Z)$. Let also
$$e^{it\Delta}\phi(x_1,\ldots,x_{n-1},t)=$$$$\sum_{(\xi_1,\ldots,\xi_{n-1})\in\Z^{n-1}}\hat{\phi}(\xi_1,\ldots,\xi_{n-1})e(x_1\xi_1+\ldots+x_{n-1}\xi_{n-1}+t(\xi_1^2\theta_1+\ldots+\xi_{n-1}^2\theta_{n-1})).$$
 We prove
\begin{theorem}[Strichartz estimates for irrational tori]
\label{thmmmm3} Let $\phi\in L^2(\T^{n-1})$ with $\supp(\hat{\phi})\subset [-N,N]^{n-1}$.
Then for each $\epsilon>0$, $p\ge \frac{2(n+1)}{n-1}$ and each interval $I\subset\R$ with $|I|\gtrsim 1$ we have
\begin{equation}
\label{EE52}
\|e^{it\Delta}\phi\|_{L^{p}(\T^{n-1}\times I)}\lesssim_{\epsilon} N^{\frac{n-1}{2}-\frac{n+1}{p}+\epsilon}|I|^{1/p}\|\phi\|_2,
\end{equation}
and the implicit constant does not depend on $I$, $N$ and $\theta_i$.
\end{theorem}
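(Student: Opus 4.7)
The plan is to reduce the Strichartz bound to the discrete restriction Theorem~\ref{thmmmm2} at the scale $\delta = N^{-2}$. First I would rescale by setting $u = Nx$, $s = N^2 t$ and writing $\eta := \xi/N$, $\tilde Q(\eta) := \sum_{i=1}^{n-1}\theta_i \eta_i^2$, so that
$$e^{it\Delta}\phi(x,t) \;=\; G(u,s) \;:=\; \sum_{\xi\in[-N,N]^{n-1}\cap\Z^{n-1}} \hat\phi(\xi)\, e\bigl(u\cdot\eta + s\tilde Q(\eta)\bigr).$$
The frequencies $(\eta, \tilde Q(\eta))$ form a $1/N$-separated subset of the graph $\{(\eta,\tilde Q(\eta)):\eta\in[-1,1]^{n-1}\}$. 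Since $\theta_i\in(1/2,2)$, the Hessian of $\tilde Q$ is $\mathrm{diag}(2\theta_i)$ with eigenvalues in $(1,4)$, so this surface is compact $C^2$ with positive definite second fundamental form and constants uniform in $\theta$. The Jacobian produces a factor $N^{-(n+1)}$ and turns the integration region into $[0,N]^{n-1}\times N^2 I$.

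The key step is to observe that $G$ is periodic in $u$ with period $N$, since the $u$-frequencies $\eta$ lie in $\tfrac1N\Z^{n-1}$. This lets me inflate the spatial cube from side $N$ to side $N^2=\delta^{-1}$ at the cost of $N^{-(n-1)}$:
$$\int_{[0,N]^{n-1}}|G(\cdot,s)|^p\,du \;=\; \frac{1}{N^{n-1}}\int_{[0,N^2]^{n-1}}|G(\cdot,s)|^p\,du.$$
The inflated region $[0,N^2]^{n-1}\times N^2 I$ is then covered by $O(|I|)$ balls of radius $R\sim N^2\sim\delta^{-1}$ (one ball suffices in each spatial direction, and $\sim|I|$ are needed along the time axis since $|I|\ge 1$). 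Applying Theorem~\ref{thmmmm2} on each ball (using $\|\hat\phi\|_{\ell^2} = \|\phi\|_2$ by Plancherel), summing over the $|I|$ balls, and collecting the powers of $N$ coming from $|B_R|=N^{2n}$, from $\delta^{(n+1)/2-(n-1)p/4} = N^{-(n+1)+(n-1)p/2}$, and from the two rescaling factors $N^{-(n+1)}$ and $N^{-(n-1)}$ yields
$$\|e^{it\Delta}\phi\|_p^p \;\lesssim_\epsilon\; |I|\, N^{\frac{(n-1)p}{2}-(n+1)+p\epsilon}\,\|\phi\|_2^p,$$
which is the claimed estimate (after relabeling $\epsilon$).

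The main obstacle is that $G$ is \emph{not} periodic in the time variable $s$: for irrational $\theta_i$, the values $\tilde Q(\eta)$ are generically irrational, so one cannot compress the time integration the way one compresses space (which is what makes the rational case cleaner and explains why only space periodicity was available to begin with). The fix is the crude cover of the time direction by $|I|$ balls above, and the pleasant feature is that this produces exactly the $|I|^{1/p}$ factor demanded by the statement, with no additional loss, because Theorem~\ref{thmmmm2} is a per-ball $L^p$ density bound. Independence of the constant on $\theta_i\in(1/2,2)$ is inherited directly from the uniformity of the constants in Theorem~\ref{thmmmm2} over the class of surfaces whose second fundamental form has eigenvalues bounded above and below.
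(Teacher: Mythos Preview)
Your argument is correct and follows essentially the same route as the paper: rescale to place the frequencies on a $\delta^{1/2}$-separated set ($\delta=N^{-2}$) on a surface with positive definite second fundamental form, use spatial periodicity to match the ball scale, and invoke Theorem~\ref{thmmmm2}. Two cosmetic differences are worth noting. First, the paper absorbs the $\theta_i$ into the rescaling via $\eta_i=\theta_i^{1/2}\xi_i/(4N)$, landing on the \emph{standard} paraboloid $P^{n-1}$; this makes the $\theta$-uniformity automatic rather than an appeal to uniform constants over a curvature class. Second, the paper inflates the spatial box all the way to side $\sim N^2|I|$ and applies Theorem~\ref{thmmmm2} once on a single ball of radius $R\sim N^2|I|\gtrsim\delta^{-1}$, whereas you stop at side $N^2$ and tile the time direction with $\sim|I|$ balls of radius $N^2$. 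Both bookkeeping choices yield the same exponent, and your discussion of why only spatial periodicity is available is apt.
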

\begin{proof}
For $-N\le \xi_1,\ldots ,\xi_{n-1}\le N$ define $\eta_i=\frac{\theta_i^{1/2}\xi_i}{4N}$ and  $a_{\eta}=\hat{\phi}(\xi)$. A simple change of variables shows that
$$\int_{\T^{n-1}\times I}|e^{it\Delta}\phi|^p\lesssim $$$$\frac{1}{N^{n+1}}\int_{|y_1|,\ldots,|y_{n-1}|\le 8N \atop{y_n\in I_{N^2}}}|\sum_{\eta_1,\ldots,\eta_{n-1}}a_\eta e(y_1\eta_1+\ldots +y_{n-1}\eta_{n-1}+y_n(\eta_1^2+\ldots \eta_{n-1}^2))|^pdy_1\ldots dy_n,$$
where $I_{N^2}$ is an interval of length $\sim N^2|I|$.
By periodicity in the $y_1,\ldots,y_{n-1}$ variables we bound the above by
$$\frac{1}{N^{n+1}(N|I|)^{n-1}}\int_{B_{N^2|I|}}|\sum_{\eta_1,\ldots,\eta_{n-1}}a_\eta e(y_1\eta_1+\ldots +y_{n-1}\eta_{n-1}+y_n(\eta_1^2+\ldots \eta_{n-1}^2))|^pdy_1\ldots dy_n,$$
for some ball $B_{N^2|I|}$ of radius $\sim N^2|I|$. Our result will follow once we note that the points $$(\eta_1,\ldots,\eta_{n-1},\eta_1^2+\ldots \eta_{n-1}^2)$$ are $\sim \frac1{N}$ separated on $P^{n-1}$ and then apply Theorem \ref{thmmmm2} with $R\sim N^2|I|$.
\end{proof}
\bigskip

\begin{remark}The diagonal form $\xi_1^2\theta_1+\ldots+\xi_{n-1}^2\theta_{n-1}$ may in fact be replaced with an arbitrary definite quadratic form $Q(\xi_1,\ldots,\xi_{n-1})$, to incorporate the more general case of flat tori. The case $\theta_1=\ldots=\theta_{n-1}=1$ corresponds to the classical (periodic) torus $\T^n$. When combined with our Theorem \ref{thmmmm3}, Propositions 3.113 and 3.114 from \cite{Bo3} show that in fact  \eqref{EE52} holds true with $\epsilon=0$ in the range $p>\frac{2(n+1)}{n-1}$ for $\T^n$. Similar partial results in the direction of $\epsilon$ removal are derived for the irrational torus in \cite{GOW}.
\end{remark}

\bigskip

\subsection{The discrete restriction for lattice points on the sphere}
\bigskip

Given integers $n\ge 3$ and $\lambda=N^2\ge 1$ consider the discrete sphere
$$\F_{n,N^2}=\{\xi=(\xi_1,\ldots,\xi_n)\in\Z^n:|\xi_1|^2+\ldots|\xi_n|^2=N^2\}.$$
In \cite{Bo1}, the first author made the following conjecture  about the eigenfunctions of the Laplacian on the torus and found some partial results
\begin{conjecture}
\label{conj1}For each $n\ge 3$, $a_\xi\in \C$, $\epsilon>0$ and each $p\ge \frac{2n}{n-2}$ we have
\begin{equation}
\label{ndreuopir8ufmcsuihduyyt}
\|\sum_{\xi\in \F_{n,N^2}}a_\xi e(\xi\cdot x)\|_{L^p(\T^n)}\lesssim_\epsilon N^{\frac{n-2}{2}-\frac{n}{p}+\epsilon}\|a_\xi\|_{l^2(\F_{n,N^2})}.
\end{equation}
\end{conjecture}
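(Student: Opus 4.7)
The plan is to combine Theorem \ref{thmmmm2} with an arithmetic input specific to lattice points on spheres. First, rescale $\xi\mapsto \xi/N$ so that $\Lambda:=N^{-1}\F_{n,N^2}$ becomes an $N^{-1}$-separated subset of $S^{n-1}$. Since the rescaled exponential sum is periodic of period $N$, the $L^p(\T^n)$ norm converts, up to normalization, into an $L^p$ average over a ball $B_R$ of radius $R\sim N^2$, to which Theorem \ref{thmmmm2} applies with $\delta=N^{-2}$. This directly yields the baseline bound $N^{\frac{n-1}{2}-\frac{n+1}{p}+\epsilon}$ for $p\ge \frac{2(n+1)}{n-1}$, which follows purely from the $(n-1)$-dimensional curvature of $S^{n-1}$ and is insensitive to the arithmetic of $\F_{n,N^2}$.

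The baseline exceeds the conjectural exponent by the positive quantity $\frac12-\frac1p$; since a random $N^{-1}$-separated subset of $S^{n-1}$ of cardinality $\sim N^{n-2}$ saturates the baseline, closing the gap must exploit the arithmetic constraint $|\xi|^2=N^2$. Numerically, $\frac{2n}{n-2}=\frac{2((n-1)+1)}{(n-1)-1}$ is precisely the decoupling exponent for a positively curved hypersurface in $\R^{n-1}$, i.e.\ one ambient dimension lower, reflecting that $\F_{n,N^2}$ has effective dimension $n-2$ rather than the $n-1$ of a generic subset of $S^{n-1}$. The plan is to couple Theorem \ref{thmmm1} with a Jarnik-type cap-counting estimate $|\F_{n,N^2}\cap B(x,N\rho)|\lesssim_\epsilon N^\epsilon(N\rho)^{n-2}$ (valid for $\rho\gtrsim N^{-1/2}$ and $x\in\R^n$) in a multi-scale scheme. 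At each intermediate scale $\rho\in[N^{-1/2},1]$, one decouples $\Lambda$ into spherical caps of radius $\rho$ via Theorem \ref{thmmm1}, then uses the cap-counting estimate to improve the trivial $\|a_\xi\|_2$ bound when recombining the inner $L^p$ contributions, and finally optimizes $\rho$. Telescoping the losses across the dyadic range $[N^{-1/2},1]$ should collapse the gap to the conjectural exponent $\frac{n-2}{2}-\frac{n}{p}$, with the critical threshold landing exactly at $p=\frac{2n}{n-2}$.

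The hard part will be the cap-counting estimate itself. For $n\ge 5$ it is a standard consequence of the Hardy--Littlewood circle method and the approach is therefore unconditional in high dimensions. For $n=3,4$ the required estimate is a genuinely open arithmetic problem, tied to equidistribution of integer points on spheres and to subconvex bounds for theta series; this is where the decoupling machinery developed in the paper runs into a wall. Thus the program reduces Conjecture \ref{conj1} to an arithmetic input matching the sharpness of \eqref{ndreuopir8ufmcsuihduyyt}, proves the conjecture unconditionally in high dimensions, and in low dimensions extends the range of validity of \eqref{ndreuopir8ufmcsuihduyyt} beyond \cite{BD1}, \cite{BD2} through the baseline step alone, while leaving the full conjecture genuinely open.
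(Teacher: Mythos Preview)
The statement you are attempting to prove is Conjecture \ref{conj1}, which the paper explicitly leaves open. There is no proof of it in the paper. What the paper does prove is the partial result Theorem 2.8: inequality \eqref{ndreuopir8ufmcsuihduyyt} holds for $n\ge 4$ and $p\ge \frac{2(n-1)}{n-3}$. That proof does use your baseline step (rescale, apply Theorem \ref{thmmmm2} at $p=\frac{2(n+1)}{n-1}$), but it closes the gap to the conjectural exponent only in that restricted range, and it does so via a completely different arithmetic input: the level-set bound $|\{|F|>\alpha\}|\lesssim_\epsilon \alpha^{-2\frac{n-1}{n-3}}N^{\frac{2}{n-3}}$ from \cite{BD1}, not via cap-counting.

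Your proposal has a genuine gap. You assert that decoupling to $\rho$-caps, combined with the cap bound $|\Lambda\cap\theta|\lesssim_\epsilon N^\epsilon(N\rho)^{n-2}$, and then ``telescoping the losses'' will collapse the discrepancy $\frac12-\frac1p$. But you never carry out the calculation, and the natural implementation fails. Decoupling is multiplicative across scales, so splitting the passage from scale $1$ to scale $N^{-1}$ at an intermediate $\rho$ and decoupling on both halves reproduces exactly the baseline $N^{\frac{n-1}{2}-\frac{n+1}{p}}$; the cap count never enters. If instead one decouples to $\rho$-caps and then uses $L^2$--$L^\infty$ interpolation within each cap together with $|\Lambda\cap\theta|\le (N\rho)^{n-2}$, the resulting exponent is
\[
\rho^{-\bigl(\frac{n-1}{2}-\frac{n+1}{p}\bigr)}(N\rho)^{(n-2)\bigl(\frac12-\frac1p\bigr)},
\]
whose $\rho$-exponent is $-\frac12+\frac3p$. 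Optimizing over $N^{-1/2}\le\rho\le 1$ gives at best $N^{\frac{n-2}{2}-\frac{n-2}{p}}$ (at $\rho=1$, which is just trivial interpolation with no decoupling), still off from the conjecture by $N^{2/p}$. No choice of $\rho$ and no dyadic telescoping of this scheme recovers $\frac{n-2}{2}-\frac{n}{p}$. So the claim that your program ``proves the conjecture unconditionally in high dimensions'' is unsupported.

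Finally, your remark that the baseline step alone already extends the range of validity beyond \cite{BD1}, \cite{BD2} is incorrect: the baseline exponent $\frac{n-1}{2}-\frac{n+1}{p}$ exceeds $\frac{n-2}{2}-\frac{n}{p}$ for every $p>2$, so Theorem \ref{thmmmm2} by itself never yields \eqref{ndreuopir8ufmcsuihduyyt} in any range. The improvement in Theorem 2.8 comes precisely from combining Theorem \ref{thmmmm2} with the level-set estimate of \cite{BD1}.
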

We refer the reader to \cite{BD1}, \cite{BD2} for a discussion on why the critical index $\frac{2n}{n-2}$ for the sphere is different from the one for the paraboloid.
The conjecture has been verified by the authors in \cite{BD1} for $p\ge \frac{2n}{n-3}$ when $n\ge 4$  and then later improved in \cite{BD2} to $p\ge \frac{44}{7}$ when $n=4$ and $p\ge \frac{14}{3}$ when $n=5$. The methods in \cite{Bo1}, \cite{BD1} and \cite{BD2} include Number Theory of various sorts, Incidence Geometry and Fourier Analysis. Using Theorem \ref{thmmmm2} we can further improve our results.
\begin{theorem}
Let $n\ge 4$. The inequality \eqref{ndreuopir8ufmcsuihduyyt} holds for $p\ge \frac{2(n-1)}{n-3}$.
\end{theorem}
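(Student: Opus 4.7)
The target exponent $\frac{2(n-1)}{n-3}$ coincides with the critical exponent of Theorem \ref{thmmmm2} applied to a hypersurface of positive curvature in dimension $n-2$. This coincidence strongly suggests the right strategy: reduce the discrete restriction problem on $\F_{n,N^2}$ to a family of lower-dimensional restriction problems in $\R^{n-2}$ obtained by fixing two coordinates. Concretely, I would write $\xi=(\xi',j,k)$ with $j=\xi_{n-1}$, $k=\xi_n$, so that $\xi'$ lies on the $(n-3)$-dimensional sphere $\F_{n-2, M^2}$ of radius $M=\sqrt{N^2-j^2-k^2}$, and set
\[
F_{j,k}(x') := \sum_{\xi' \in \F_{n-2, M^2}} a_{\xi',j,k}\, e(\xi' \cdot x'),
\]
so that
\[
\sum_{\xi \in \F_{n,N^2}} a_\xi e(\xi \cdot x) \;=\; \sum_{j^2+k^2 \le N^2} e(j x_{n-1} + k x_n)\, F_{j,k}(x').
\]

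Next, I would apply Theorem \ref{thmmmm2} to each slice. Since $\F_{n-2, M^2}/M$ is an $M^{-1}$-separated subset of the unit sphere $S^{n-3}$, which is a hypersurface with positive definite second fundamental form in $\R^{n-2}$, the theorem at scale $\delta = M^{-2}$ yields, for $p \ge \frac{2(n-1)}{n-3}$,
\[
\|F_{j,k}\|_{L^p(\T^{n-2})} \;\lesssim_\epsilon\; M^{\frac{n-3}{2} - \frac{n-1}{p} + \epsilon}\, \|a_{\cdot,j,k}\|_{l^2}.
\]

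The technical crux and main obstacle is assembling the slice-wise estimates into a bound on $\|\cdot\|_{L^p(\T^n)}$. A naive Cauchy--Schwarz on the 2D outer sum over $(j,k)$ loses a factor $N^{1-2/p}$, whereas the gap between the direct application of Theorem \ref{thmmmm2} in dimension $n$ and the target exponent $N^{\frac{n-2}{2}-\frac{n}{p}+\epsilon}$ only permits $N^{1/2-1/p}$ of loss. To close this half-power gap, I would organize the outer sum along the concentric discrete circles $\{j^2+k^2=L\}$ and exploit Gauss's bound $r_2(L)=O(L^\epsilon)$ on the number of representations of $L$ as a sum of two squares: on each such circle the 2D exponential sum has only $O(N^\epsilon)$ terms. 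Combining a Zygmund-type $L^p$ inequality on each individual circle with $L^2$-orthogonality between distinct circles (via Plancherel in $(x_{n-1}, x_n)$) allows the 2D assembly to be carried out with only an $N^\epsilon$ overall loss, recovering the sharp $N^{\frac{n-2}{2}-\frac{n}{p}+\epsilon}$ exponent. This final arithmetic-combinatorial step, which refines the incidence-geometric arguments of \cite{BD1} and \cite{BD2}, is where the newly established sharp form of Theorem \ref{thmmmm2} translates into the improved range for the lattice sphere $\F_{n,N^2}$.
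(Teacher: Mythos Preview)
Your slicing observation is appealing---the critical exponent $\frac{2(n-1)}{n-3}$ is indeed the one from Theorem~\ref{thmmmm2} in $\R^{n-2}$, and the slice-wise bound $\|F_{j,k}\|_{L^p(\T^{n-2})}\lesssim_\epsilon M^{\epsilon}\|a_{\cdot,j,k}\|_{l^2}$ at $p=\frac{2(n-1)}{n-3}$ is legitimate (periodicity allows $R$ as large as needed). But the assembly step, where the proof must actually live, does not close. You correctly identify that the naive $(j,k)$-summation loses $N^{1-2/p}$ while only $N^{1/2-1/p}$ is affordable, and propose to recover the difference by grouping into circles $j^2+k^2=L$, using $r_2(L)=O(L^\epsilon)$ on each circle and ``$L^2$-orthogonality between distinct circles.'' The first ingredient does give $\|H_L\|_{L^p(\T^2)}\lesssim N^\epsilon\|H_L\|_{L^2(\T^2)}$ for each fixed $L$, but the second gives nothing beyond $p=2$: the $\sim N^2$ circles are a dense, non-lacunary family, and there is no $l^2L^p$ decoupling available for functions with Fourier support on such nested circles. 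Concretely, for fixed $x'$ the coefficients $c_{j,k}=F_{j,k}(x')$ are arbitrary, and the claim $\|\sum_{j,k}c_{j,k}e(j\cdot+k\cdot)\|_{L^p(\T^2)}\lesssim N^\epsilon\|c\|_{l^2}$ fails already for $c_{j,k}\equiv 1$ (the Dirichlet kernel). The arithmetic of $r_2$ does not manufacture the $L^p$-orthogonality you need.

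The paper's proof is entirely different and does not slice at all. It works on $S^{n-1}\subset\R^n$ directly and splits $\int|F|^{p_n}$, $p_n=\frac{2(n-1)}{n-3}$, at height $\alpha\sim N^{\frac{n-1}{4}+\epsilon}$. For large values it \emph{quotes} the weak-type estimate from \cite{BD1},
\[
|\{|F|>\alpha\}|\lesssim_\epsilon \alpha^{-\frac{2(n-1)}{n-3}}N^{\frac{2}{n-3}},\qquad \alpha\gtrsim_\epsilon N^{\frac{n-1}{4}+\epsilon},
\]
itself obtained by circle-method number theory; this handles $\int_{\{|F|\gtrsim N^{(n-1)/4}\}}|F|^{p_n}$ up to $N^\epsilon$. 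For small values it writes $\int_{\{|F|\lesssim N^{(n-1)/4}\}}|F|^{p_n}\le N^{(\frac{n-1}{4}+\epsilon)(p_n-\frac{2(n+1)}{n-1})}\int|F|^{\frac{2(n+1)}{n-1}}$ and applies the new Theorem~\ref{thmmmm2} to $S^{n-1}$ at the \emph{critical} exponent $\frac{2(n+1)}{n-1}$, where the power of $N$ is zero. The two contributions match exactly at $N^{\frac{2}{n-3}+\epsilon}$, which is the target $\|F\|_{p_n}^{p_n}$. So the decoupling input enters in dimension $n$ at exponent $\frac{2(n+1)}{n-1}$, not in dimension $n-2$ at $\frac{2(n-1)}{n-3}$; the prior level-set estimate from \cite{BD1} is not refined here but is an indispensable black box.
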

\begin{proof}
Fix $\|a_\xi\|_2=1$ and define $$F(x)=\sum_{\xi\in\F_{n,N^2}}a_\xi e(x\cdot\xi).$$
We start by recalling the following estimate (24) from \cite{BD1}, valid for $n\ge 4$ and
 $\alpha\gtrsim_\epsilon N^{\frac{n-1}{4}+\epsilon}$
\begin{equation}
\label{bnew13fnguithtuirjioju}
|\{|F|>\alpha\}|\lesssim_\epsilon \alpha^{-2\frac{n-1}{n-3}}N^{\frac2{n-3}}.
\end{equation}
By invoking interpolation with the trivial $L^\infty$ bound, it suffices to consider the endpoint $p=p_n=\frac{2(n-1)}{n-3}$. Note that $\|F\|_\infty\le N^{C_n}$. It follows that
$$\int|F|^{p_n}=\int_{N^{\frac{n-1}{4}+\epsilon}\lesssim_\epsilon\alpha\le N^{C_n}}\alpha^{p_n-1}|\{|F|>\alpha\}|d\alpha+N^{(\frac{n-1}{4}+\epsilon)(p_n-\frac{2(n+1)}{n-1})}\int|F|^{\frac{2(n+1)}{n-1}}.$$
The result will follow by applying \eqref{bnew13fnguithtuirjioju} to the first term and Theorem \ref{thmmmm2} with $p=\frac{2(n+1)}{n-1}$ to the second term.
\end{proof}

\bigskip

\subsection{Additive energies and Incidence Geometry}
\bigskip

 The proof of Theorem \ref{thmmm1} in the following sections will implicitly rely on the incidence theory of tubes and cubes. This theory manifests itself in the deep multilinear Kakeya phenomenon which lies behind  Theorem \ref{invthm1}. It thus should come as no surprise that Theorem \ref{thmmm1} has applications to Incidence Geometry.

An interesting question is whether there is a proof of Theorem \ref{thmmmm2} using softer arguments. Or at least if there is such an argument which recovers \eqref{EE49} for $R$ {\em large enough}, depending on $\Lambda$. When $n=3$ and $S=P^2$ we can prove such a result. In fact our result is surprisingly strong, in that the bound $|\Lambda|^\epsilon$ does not depend on the separation between the points in $\Lambda$.

\begin{theorem}
\label{thmmmm4}
Let $\Lambda\subset P^2$ be an arbitrary collection of distinct points.  Then for $R$ large enough, depending only on the geometry of $\Lambda$ and on its cardinality $|\Lambda|$, we have
\begin{equation}
\label{EE61}
(\frac{1}{|B_R|}\int_{B_R}|\sum_{\xi\in \Lambda}a_\xi e(\xi \cdot x)|^{4})^{1/4}\lesssim_{\epsilon} |\Lambda|^{\epsilon}\|a_\xi\|_2.
\end{equation}
\end{theorem}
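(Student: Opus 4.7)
The plan is to convert \eqref{EE61} into an additive energy estimate for points on $P^2$, then to bound that energy via an incidence argument for circles in the plane.

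\emph{Step 1 (reduction to additive energy).} Let $\delta_\Lambda$ be the minimum nonzero value of $|\xi_1+\xi_2-\xi_3-\xi_4|$ as the $\xi_i$ range over $\Lambda$. Expanding
$$\int_{B_R}|f|^4\,dx = \sum_{\xi_1,\xi_2,\xi_3,\xi_4\in\Lambda} a_{\xi_1}a_{\xi_2}\bar a_{\xi_3}\bar a_{\xi_4}\int_{B_R} e((\xi_1+\xi_2-\xi_3-\xi_4)\cdot x)\,dx,$$
the off-diagonal Fourier integrals decay in $R|\mu|$, so taking $R$ polynomially large in $|\Lambda|/\delta_\Lambda$ makes their total contribution negligible compared to the diagonal. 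Hence \eqref{EE61} reduces to the weighted additive energy bound
$$E_w(\Lambda,a):=\sum_{\xi_1+\xi_2=\xi_3+\xi_4}a_{\xi_1}a_{\xi_2}\bar a_{\xi_3}\bar a_{\xi_4}\lesssim|\Lambda|^{4\epsilon}\|a\|_2^4.$$

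\emph{Step 2 (dyadic reduction to characteristic functions).} Normalize $\|a\|_\infty=1$. Coefficients with $|a_\xi|<|\Lambda|^{-10}$ can be discarded via a trivial $L^\infty$ estimate. Decompose the rest as $a=\sum_{j=0}^{J}a_j$ with $J\lesssim\log|\Lambda|$ and $a_j$ supported on $\Omega_j:=\{\xi:|a_\xi|\sim 2^{-j}\}$. For the convolution $h_j:=a_j*a_j$ one has $|h_j(\eta)|\lesssim 4^{-j}r_{\Omega_j}(\eta)$, and hence $\|h_j\|_2^2\lesssim 16^{-j}E(\Omega_j)$. Granted the unweighted bound $E(\Omega)\lesssim|\Omega|^2$ for every $\Omega\subset P^2$ proved in Step 3, this yields $\|f_{a_j}\|_{L^4(B_R)}\lesssim|B_R|^{1/4}\|a_j\|_2$. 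The triangle inequality combined with Cauchy--Schwarz across the $J$ dyadic scales then gives $\|f\|_{L^4(B_R)}\lesssim J^{1/2}|B_R|^{1/4}\|a\|_2\lesssim|\Lambda|^\epsilon|B_R|^{1/4}\|a\|_2$, which is \eqref{EE61}.

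\emph{Step 3 (unweighted energy via Szemer\'edi--Trotter for circles).} Writing $\xi_i=(p_i,|p_i|^2)$ with $p_i\in\R^2$, the relation $\xi_1+\xi_2=\xi_3+\xi_4$ unfolds into $p_1+p_2=p_3+p_4$ together with $|p_1|^2+|p_2|^2=|p_3|^2+|p_4|^2$, the latter being equivalent, by the parallelogram identity, to $|p_1-p_2|=|p_3-p_4|$. Thus each quadruple contributing to $E(\Omega)$ corresponds to two diameters of a common circle in $\R^2$ all of whose four endpoints lie in $\pi(\Omega)$. Letting $D_C$ denote the number of such diameter pairs on the circle $C$, one has $E(\Omega)\lesssim\sum_C D_C^2$. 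Each pair of points in $\pi(\Omega)$ determines a unique diameter circle, so $\sum_C D_C\lesssim|\Omega|^2$; and $k$ necessarily disjoint diameter pairs on $C$ require $2k$ distinct points of $\pi(\Omega)$ on $C$, so the Szemer\'edi--Trotter incidence bound for circles gives $|\{C:D_C\geq k\}|\lesssim|\Omega|^2/k^3$. A dyadic summation then yields $\sum_C D_C^2\lesssim|\Omega|^2$, as required.

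The main obstacle is Step 3: identifying the correct geometric reformulation (pairs of diameters of a shared circle) and deploying the log-free Szemer\'edi--Trotter bound for circles, which together convert the additive-energy question into an incidence count and afford the crucial $|\Omega|^2$ estimate with no $|\Omega|^\epsilon$ loss.
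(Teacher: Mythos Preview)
Your Steps 1 and 2 are fine and parallel the paper's reduction (the paper uses restricted-type interpolation where you use dyadic level sets, which is equivalent). The problem is in Step 3.

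You invoke ``the Szemer\'edi--Trotter incidence bound for circles'' to conclude that the number of circles containing at least $2k$ points of a planar $N$-point set is $O(N^2/k^3)$. That bound is \emph{not} a theorem; it is precisely the point--circle incidence conjecture, which remains open (the best known bounds for arbitrary circles have an extra term of the shape $N^{6/11}m^{9/11+o(1)}$ and do not give what you need). The paper itself explicitly notes that \eqref{e1} would follow ``if one assumes the circle-point incidence conjecture'', and then says ``We need however a new observation''. So your Step 3, as written, has a genuine gap.

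The fix the paper uses is exactly the diameter observation you already made, pushed one step further: since $(P_1,P_2)$ and $(P_3,P_4)$ are both diametrically opposite on the same circle, the triple $P_1,P_2,P_3$ subtends a right angle at $P_3$ (Thales). Distinct additive quadruples give distinct such right-angle configurations, so $\E_2(\Omega)$ is at most the number of right angles among the points of $\pi(\Omega)$. Pach and Sharir proved (via the \emph{line} version of Szemer\'edi--Trotter, which is of course available) that any $N$ points in the plane determine $O(N^2\log N)$ repetitions of a fixed angle. This yields $\E_2(\Omega)\lesssim|\Omega|^2\log|\Omega|\lesssim_\epsilon|\Omega|^{2+\epsilon}$, which is enough for your Step 2 after an obvious adjustment. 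Replacing your circle-incidence appeal with this right-angle argument repairs the proof.
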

Due to periodicity, this recovers (ii) of Theorem \ref{rkjgtugfm45nbv 907u90[mq9tu8} for $n=3$. To see the proof we recall some terminology and well known results.

Given an integer $k\ge 2$ and a set $\Lambda$ in $\R^n$ we introduce its $k$-energy
$$\E_k(\Lambda)=|\{(\lambda_1,\ldots,\lambda_{2k})\in \Lambda^{2k}:\;\lambda_1+\ldots+\lambda_k=\lambda_{k+1}+\ldots+\lambda_{2k}\}|.$$
Note the trivial lower bound $|\E_k(\Lambda)|\ge |\Lambda|^k$.

We recall the point-line incidence theorem due to Szemer\'edi and Trotter
\begin{theorem}[\cite{ST}]
 There are $O(|\W|+|\P|+(|\W||\P|)^{2/3})$ incidences between any collections $\W$ and $\P$ of  lines and points in the plane.
\end{theorem}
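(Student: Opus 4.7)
The plan is to give Székely's elegant proof of Szemerédi--Trotter via the crossing number inequality. First I would build a plane graph $G$ whose vertices are the points of $\P$: for each line $\ell \in \W$, list the points of $\P$ on $\ell$ in order and connect consecutive ones by edges drawn along $\ell$. If $I$ denotes the total number of incidences and $\ell$ carries $k_\ell$ points, then $\ell$ contributes $\max(k_\ell-1,0)$ edges, so the number of edges is at least $I - |\W|$. Since any two distinct lines intersect in at most one point, the drawing of $G$ inherited from $\R^2$ has at most $\binom{|\W|}{2}$ crossings.

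Next I would invoke the crossing number inequality: for any simple graph drawn in the plane with $v$ vertices and $e \ge 4v$ edges, the number of crossings is $\gtrsim e^3/v^2$. Applying this with $v = |\P|$ and $e = I - |\W|$, and assuming the hypothesis $I - |\W| \ge 4|\P|$, one obtains
$$
\binom{|\W|}{2} \;\gtrsim\; \frac{(I - |\W|)^3}{|\P|^2},
$$
which rearranges to $I - |\W| \lesssim (|\P|\,|\W|)^{2/3}$. In the complementary case $I - |\W| < 4|\P|$ the bound $I \lesssim |\P| + |\W|$ is immediate. Combining the two regimes yields $I \lesssim |\P| + |\W| + (|\P|\,|\W|)^{2/3}$, as claimed.

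The main obstacle, and the ingredient I would need to prove as a preliminary lemma, is the crossing number inequality itself. The standard approach is a probabilistic amplification of the Euler-formula bound $\operatorname{cr}(G) \ge e - 3v + 6$, which follows from planarity by deleting one edge per crossing. I would sample each vertex of $G$ independently with probability $p \in (0,1]$ to obtain a random induced subgraph $H$, apply the Euler bound to $H$, and take expectations to get $p^4 \operatorname{cr}(G) \ge p^2 e - 3p\, v$; optimizing in $p$ (roughly $p \sim v/e$) produces the desired $\operatorname{cr}(G) \gtrsim e^3/v^2$. Once this is in hand, the incidence estimate falls out by the two-line rearrangement above.
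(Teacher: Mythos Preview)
Your argument is Sz\'ekely's proof via the crossing number inequality, and it is correct as written. One small point worth making explicit: the graph $G$ is simple because two distinct points of $\P$ lie on at most one common line, so there are no multi-edges and the crossing number lemma applies in its standard form; you implicitly use this but do not say it.

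There is nothing to compare against in the paper: the Szemer\'edi--Trotter theorem is quoted as a black box with a citation to \cite{ST} and is not proved here. For what it is worth, the original argument in \cite{ST} proceeds by a cell decomposition of the plane rather than via the crossing number inequality; Sz\'ekely's approach, which you give, is later, shorter, and arguably more conceptual, trading the combinatorial geometry of cells for a single topological input (Euler's formula amplified by random sampling). Either route yields the same bound with the same exponents.
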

Up to extra logarithmic factors, the same thing is conjectured to hold if lines are replaced with circles. Another related conjecture is
\begin{conjecture}[The unit distance conjecture]
The number of unit distances between $N$ points in the plane is always $\lesssim_\epsilon N^{1+\epsilon}$
\end{conjecture}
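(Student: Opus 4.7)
The plan is to reduce the unit distance count to a point-circle incidence problem. Given $N$ points $p_1,\ldots,p_N\in\R^2$, let $C_i$ be the unit circle centered at $p_i$. A pair $(i,j)$ with $|p_i-p_j|=1$ corresponds exactly to an incidence between the point $p_j$ and the circle $C_i$, so the number of unit distances equals, up to a factor $2$, the number of incidences between $N$ points and $N$ unit circles. The conjecture is thus implied by the conjectured analogue of Szemer\'edi--Trotter for circles mentioned just above the statement.

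First, I would try to push the Spencer--Szemer\'edi--Trotter $N^{4/3}$ argument beyond its natural barrier. That argument applies the crossing number inequality to the incidence graph using only the fact that two distinct unit circles meet in at most two points. The extra rigidity of the unit circle family (a two-parameter family, as opposed to three for general circles) suggests that a Guth--Katz style polynomial partitioning of $\R^2$ by a polynomial of well-chosen degree $D$ could decouple the combinatorial content from the algebraic content: incidences inside each open cell are controlled by local Szemer\'edi--Trotter, while incidences on the zero set itself must use the two-parameter structure to bound how many unit circles can pack onto a fixed algebraic curve.

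In parallel I would try a Fourier analytic approach motivated by the $l^2$ decoupling theorem. With $\mu=\sum_i\delta_{p_i}$ and a smooth cutoff $\psi$ to the annulus $\{1-\delta\le|y|\le 1+\delta\}$, the identity
\begin{equation*}
\sum_{i,j}\widehat{\psi}(p_j-p_i)\;=\;\int_{\R^2}|\widehat{\mu}(y)|^2\,\psi(y)\,dy
\end{equation*}
recasts a $\delta$-smoothed unit distance count as the $L^2$ mass of $\widehat{\mu}$ on a thin annulus around $S^1$. Partitioning $S^1$ into $\delta^{1/2}$-caps and using the $n=2$ case of Theorem \ref{thmmmm2} to control the resulting exponential sums cap-by-cap, combined with careful treatment of the weight $\widehat{\psi}$, would give subcritical bounds on the smoothed count; summing over dyadic $\delta$ and passing to the genuine unit distance would complete the argument.

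The main obstacle, and the reason the conjecture has stood for over seventy years, is the gap between the best known upper bound $O(N^{4/3})$ and the best constructions such as Erd\H{o}s's $\sqrt{N}\times\sqrt{N}$ integer grid, which achieves only $N^{1+c/\log\log N}$ unit distances. Decoupling inequalities such as \eqref{Snew26} treat all $\delta^{1/2}$-caps symmetrically via an $l^2$ sum and cannot detect the arithmetic concentration responsible for the grid's efficiency. Closing the gap appears to require essentially new algebraic input about how many unit circles can share rich arcs, input that the harmonic analysis of this paper does not supply.
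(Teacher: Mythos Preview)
The statement you were asked to prove is a \emph{conjecture}, not a theorem. The paper does not prove it; immediately after stating it the authors write that ``the point-circle and the unit distance conjectures are thought to be rather difficult, and only partial results are known.'' The unit distance conjecture is open, and the paper uses it only conditionally (later in the section, to observe that it would imply a sharp bound on $\E_3(\Lambda)$ for $\Lambda\subset S^1$).

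Your writeup is not a proof and does not claim to be one: you sketch two plausible lines of attack (polynomial partitioning, and a Fourier/decoupling approach via the annulus identity), and then in your final paragraph you correctly explain why neither closes the gap between the known $O(N^{4/3})$ bound and the conjectured $N^{1+\epsilon}$. That assessment is accurate. In particular, the decoupling input from this paper is far too symmetric across caps to detect the arithmetic structure that any proof would have to exploit; the paper itself makes no claim that its methods bear on the conjecture.

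So there is no discrepancy to report: the paper has no proof of this statement, and your proposal is an honest discussion of why the problem is hard rather than a proof. If the assignment was to reproduce the paper's argument, the correct response is simply that none exists---this is an open problem cited for context.
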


The point-circle and the unit distance conjectures are thought to be rather difficult, and only partial results are known.
\bigskip

\begin{proof}[of Theorem \ref{thmmmm4}]

The following parameter encodes the "additive geometry" of $\Lambda$
$$\upsilon:=\min\left\{|\eta_1+\eta_2-\eta_3-\eta_4|:\eta_i\in \Lambda\text{ and }|\eta_1+\eta_2-\eta_3-\eta_4|\not=0\right\}.$$
We show that Theorem \ref{thmmmm4} holds if $R\gtrsim \frac{|\Lambda|^2}{\upsilon}$. Fix such an $R$.

Using restricted type interpolation  it suffices to prove
$$\frac1{|B_R|}\int_{B_R}|\sum_{\eta\in\Lambda'} e(x\cdot\eta)|^{4}dx\lesssim_\epsilon |\Lambda'|^{2+\epsilon} ,$$
for each subset $\Lambda'\subset\Lambda$. See Section 6 in \cite{BD2} for details on this type of approach.

Expanding the $L^4$ norm we need to prove
$$|\sum_{\eta_i\in\Lambda'}\frac1{R^3}\int_{B_R}e((\eta_1+\eta_2-\eta_3-\eta_4)\cdot x)dx|\lesssim_\epsilon |\Lambda'|^{2+\epsilon} .$$
Note that if $A\not=0$
$$|\int_{-R}^{R}e(At)dt|\le A^{-1}.$$
Using this we get that
$$|\sum_{\eta_i\in\Lambda'\atop{|\eta_1+\eta_2-\eta_3-\eta_4|\not=0}}\frac1{R^3}\int_{B_R}e((\eta_1+\eta_2-\eta_3-\eta_4)\cdot x)dx|\le \frac{|\Lambda'|^4}{R\upsilon} \le |\Lambda'|^2.$$

Thus it suffices to prove the following estimate  for the additive energy
\begin{equation}
\label{e1}
\E_2(\Lambda')\lesssim_{\epsilon} |\Lambda'|^{2+\epsilon}.
\end{equation}
Assume
\begin{equation}
\label{EEE1}
\eta_1+\eta_2=\eta_3+\eta_4,
\end{equation}
with $\eta_i:=(\alpha_i,\beta_i,\alpha_i^2+\beta_i^2)$.
It has been observed in \cite{Bo3} that given $A,B,C\in\R$, the equality
$$\eta_1+\eta_2=(A,B,C)$$ implies that for $i\in\{1,2\}$
\begin{equation}
\label{EEE2}
(\alpha_i-\frac{A}{2})^2+(\beta_i-\frac{B}2)^2=\frac{2C-A^2-B^2}{4}.
\end{equation}
Thus the four points $P_i=(\alpha_i,\beta_i)$ corresponding to any additive quadruple \eqref{EEE1} must belong to a  circle. As observed in \cite{Bo3}, this is enough to conclude \eqref{e1} in the lattice case, as circles of radius $M$ contain $\lesssim_\epsilon M^\epsilon$ lattice points. The bound \eqref{e1} also follows immediately if one assumes the circle-point incidence conjecture.

We need however a new observation. Note that if \eqref{EEE1} holds then in fact both $P_1,P_2$ and $P_3,P_4$ are diametrically opposite on the circle \eqref{EEE2}. Thus each additive quadruple  gives rise to a distinct right angle, the one subtended by $P_1,P_2,P_3$ (say).
The estimate \eqref{e1} is then an immediate consequence of the following application of the Szemer\'edi-Trotter Theorem.
\end{proof}

\begin{theorem}[Pach, Sharir, \cite{PS}]
\label{P-Sthm}
The number of repetitions of a given angle among $N$ points in the plane is $O(N^2\log N)$.
\end{theorem}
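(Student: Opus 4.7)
The plan is to reduce the repeated-angle problem to a point-curve incidence problem via the inscribed angle theorem, and then bound the resulting incidences using a Szemer\'edi--Trotter-type argument for pseudo-circles.

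Fix $\alpha \in (0, \pi)$ and let $S$ be a set of $N$ points in $\R^2$. By the inscribed angle theorem, for each pair of distinct points $A, C \in \R^2$ the locus of points $B$ satisfying $\angle ABC = \alpha$ consists of two circular arcs $\gamma_{A,C}^{\pm}$ joining $A$ to $C$ on opposite sides of the chord $AC$, each lying on a circle of radius $|AC|/(2\sin\alpha)$. Let $\mathcal{F} = \{\gamma_{A,C}^{\pm} : \{A, C\} \subset S,\ A \ne C\}$, so $|\mathcal{F}| \le N(N-1)$. Then the number of ordered triples $(A, B, C) \in S^3$ with distinct entries and $\angle ABC = \alpha$ is, up to an error of $O(N^2)$ from the endpoint contributions $B \in \{A, C\}$, at most twice the point-arc incidence count $I(S, \mathcal{F})$.

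It remains to show $I(S, \mathcal{F}) = O(N^2 \log N)$. Any two distinct arcs in $\mathcal{F}$ meet in at most two points, so $\mathcal{F}$ is a family of pseudo-circles. Decompose $\mathcal{F}$ dyadically by multiplicity, $\mathcal{F}_j := \{\gamma \in \mathcal{F} : 2^j \le |\gamma \cap S| < 2^{j+1}\}$ for $0 \le j \le \log_2 N$. A Szemer\'edi--Trotter-type bound for pseudo-circles yields $|\mathcal{F}_j| \lesssim N^2/2^{3j} + N/2^j$, and hence $I(S, \mathcal{F}) \lesssim \sum_j 2^{j+1} |\mathcal{F}_j| \lesssim N^2 \log N$.

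The main obstacle is the second step. The Szemer\'edi--Trotter theorem stated in the paper is for straight lines, and its circle analogue requires a non-degeneracy condition: few pairs of points should lie on unusually many curves of the family. For $\mathcal{F}$ this could fail if many chord-pairs $(A, C) \in S^2$ happen to be concyclic with two fixed points at inscribed angle $\alpha$. The dyadic decomposition above isolates and absorbs this degeneracy, so that the classical Szemer\'edi--Trotter argument (or its circular refinement obtained after an inversive transformation sending the relevant circles to pseudo-lines) applies on each level; the logarithmic loss is the unavoidable price of summing over $O(\log N)$ dyadic classes.
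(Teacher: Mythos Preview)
The paper does not prove this theorem; it is quoted from Pach--Sharir as a black box, so there is no ``paper's own proof'' to compare against. That said, your argument has a genuine gap.

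The bound $|\mathcal{F}_j|\lesssim N^2/2^{3j}+N/2^j$ is the Szemer\'edi--Trotter estimate for $k$-rich curves in a family satisfying \emph{two} axioms: any two curves meet in $O(1)$ points, \emph{and} through any two points pass $O(1)$ curves of the family. Your family $\mathcal F$ violates the second axiom. Given $B_1,B_2\in S$, an arc $\gamma_{A,C}$ contains both precisely when $A,B_1,B_2,C$ are concyclic with $B_1,B_2$ on the same arc cut off by $A,C$; for each circle through $B_1,B_2$ there can be many admissible chords $(A,C)$ with $A,C\in S$, and there are many such circles. (The first axiom also fails: if $A,C,A',C'$ lie on one circle with $|AC|=|A'C'|$, the arcs $\gamma_{A,C}$ and $\gamma_{A',C'}$ lie on that common circle and can overlap in a sub-arc.) With only the first axiom, the K\H{o}v\'ari--S\'os--Tur\'an bound for $N$ points and $N^2$ pseudo-circles gives merely $O(N^{7/3})$ incidences, and no dyadic decomposition by richness recovers the missing axiom. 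Your last paragraph asserts that the decomposition ``isolates and absorbs'' the degeneracy, but it does not: the claimed richness bound simply fails for $\mathcal F$. The inversive trick you allude to, when carried out (invert at a fixed endpoint $A$ so that the $N-1$ arcs through $A$ become lines), yields $O(N^{4/3})$ incidences for each fixed $A$ by the genuine Szemer\'edi--Trotter theorem, hence $O(N^{7/3})$ in total --- again short of $O(N^2\log N)$.

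Pach and Sharir work with \emph{lines}, not arcs: writing the count as $\sum_B\sum_\theta n_B(\theta)\,n_B(\theta+\alpha)$, the crucial structural fact is that for each line $\ell$ the ``rotated'' lines through the points of $\ell\cap S$ (at angle $\alpha$ from $\ell$) are parallel, which interacts with the $k$-rich-line bound from Szemer\'edi--Trotter to produce the $\log N$ after a dyadic summation over line richness. The argument is not long, but it is not the pseudo-circle incidence count you set up.
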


It has been recognized that the restriction theory for the sphere and the paraboloid are very similar\footnote{A notable difference is the lattice case of the discrete restriction, but that has to do with a rather specialized scenario}. Consequently, one expects not only Theorem \ref{thmmmm4} to be true also for $S^2$, but for a very similar argument to work in that case, too. If that is indeed the case, it does not appear to be obvious. The same argument as above shows that an additive quadruple of points on $S^2$ will belong to a circle on $S^2$,
and moreover the four points will be diametrically opposite in pairs. There will thus be at least $\E_2(\Lambda)$ right angles in $\Lambda$. This is however of no use in this setting, as $\Lambda$ lives in three dimensions. It is proved in \cite{AS} that a set of $N$ points in $\R^3$ has $O(N^{7/3})$ right angles, and moreover this bound is tight in general.

Another idea is to map an additive quadruple to the plane  using the stereographic projection. The resulting four points will again belong to a circle, so the bound on the energy would follow if the circle-point incidence conjecture is proved. Unfortunately, the stereographic  projection does not preserve the property of being diametrically opposite and thus prevents the application of Theorem \ref{P-Sthm}. We thus ask
\begin{question}
Is it true that $\E_2(\Lambda)\lesssim_\epsilon|\Lambda|^{2+\epsilon}$ for each finite $\Lambda\subset S^2$?
\end{question}
One can ask the same question for $P^{n-1}$ and $S^{n-1}$ when $n\ge 4$. The right conjecture seems to be
\begin{equation}
\label{EEE4}
\E_2(\Lambda)\lesssim_\epsilon|\Lambda|^{\frac{3n-5}{n-1}+\epsilon}.
\end{equation}
Interestingly, when $\Lambda\subset P^3$ this follows from the aforementioned result  in \cite{AS}, and in fact there is no $\Lambda^\epsilon$ loss this time. However, in the same paper \cite{AS} it is proved that this argument fails in  dimensions five and higher: there is a set with $N$ points in $\R^4$ which determines $\gtrsim N^3$ right angles. We point out that Theorem \ref{thmmmm2} implies \eqref{EEE4} for subsets of $P^{n-1}$ and $S^{n-1}$ when $n\ge 3$, in the case when the points $\Lambda$ are $\sim |\Lambda|^{-\frac{1}{n-1}}$ separated.
\bigskip

It is also natural to investigate the two dimensional phenomenon, for $S=S^1$ and $S=P^1$.
\begin{question}
\label{qqqq1}
Is it true that for each $\Lambda\subset S$
\begin{equation}
\label{dvhgu34otiu549m1uye78656w14}
\E_3(\Lambda)\lesssim_\epsilon|\Lambda|^{3+\epsilon}?
\end{equation}
\end{question}
Surprisingly, this question seems to be harder than its three dimensional analogue from Theorem \ref{thmmmm4}. Note that the case when the points are $|\Lambda|^{-C}$ separated follows from Theorem \ref{thmmmm2}. We are not aware of an alternative (softer) argument.

A positive answer to Question \ref{qqqq1} would have surprising applications to Number Theory. In particular it would answer the following  question posed in \cite{BoBo}.
\begin{question}
\label{qqqqdfkhernvvt834u2vrny45y1}
Let $N$ be a positive integer.  Does \eqref{dvhgu34otiu549m1uye78656w14} hold when $\Lambda$ are the lattice points on the circle $N^{1/2}S^1$?
\end{question}
Note that Theorem \ref{thmmmm2} is too weak to answer this question. Indeed,  rescaling by $N^{1/2}$, the lattice points in $N^{1/2}S^1$ become $N^{-1/2}$-separated points on $S^1$. However, it is known that there are $O(N^{\frac{O(1)}{\log\log N}})$ lattice points on the circle $N^{1/2}S^1$.

The analysis in \cite{BoBo} establishes some partial results as well as some intriguing connections to the theory of elliptic curves, see for example Theorem 8 there. An easier question with similar flavor is answered in the next subsection.

The best that can be said regarding Question \ref{qqqq1} with topological based methods seems to be the following
\begin{proposition}Let $S$ be either $P^1$ or $S^1$. For each $\Lambda\subset S$
$$\E_3(\Lambda)\lesssim_\epsilon|\Lambda|^{\frac72+\epsilon}.$$
\end{proposition}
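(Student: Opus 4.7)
The plan is to apply Cauchy--Schwarz to reduce to bounds on $\E_2$ and $\E_4$, invoke strict convexity for the former, and apply Theorem \ref{thmmmm2} at $p=8$ for the latter. Setting $F(x)=\sum_{\lambda\in\Lambda}e(\lambda\cdot x)$, one has the identity $\E_k(\Lambda)=\lim_{R\to\infty}|B_R|^{-1}\int_{B_R}|F|^{2k}\,dx$ for each positive integer $k$. Applying Cauchy--Schwarz to the factorization $|F|^6=|F|^2\cdot|F|^4$ at each finite $R$ and passing to the limit yields
\[
\E_3(\Lambda)^2\le\E_2(\Lambda)\cdot\E_4(\Lambda).
\]
The bound $\E_2(\Lambda)\le 2|\Lambda|^2$ follows from strict convexity of $S$: for each $v\in\R^2$, the unordered pair $\{\lambda_1,\lambda_2\}\subset S$ with $\lambda_1+\lambda_2=v$ is the intersection $S\cap(v-S)$, which contains at most two points, so $r_2(v)\le 2$ and hence $\E_2\le 2\sum_v r_2(v)=2|\Lambda|^2$.

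To bound $\E_4$ I would apply Theorem \ref{thmmmm2} with $n=2$ and $p=8$ to a $\delta^{1/2}$-separated $\Lambda$ with $a_\lambda=1$, obtaining
\[
\left(|B_R|^{-1}\int_{B_R}|F|^8\right)^{1/8}\lesssim_\epsilon \delta^{-1/16-\epsilon}|\Lambda|^{1/2},\qquad R\gtrsim\delta^{-1}.
\]
Choosing $\delta=|\Lambda|^{-2}$ (the largest scale at which $|\Lambda|$ points on a bounded curve can be $\delta^{1/2}$-separated) and isolating the $v=0$ contribution to $\int|F|^8$ via a nonnegative Schwartz bump $\psi$ with $\hat\psi\ge 0$, $\hat\psi(0)=1$, concentrated at scale $R=|\Lambda|^2$, one arrives at $\E_4(\Lambda)\le\int\psi\cdot|F|^8\lesssim_\epsilon|\Lambda|^{5+\epsilon}$. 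The hard part here is the separation hypothesis in Theorem \ref{thmmmm2}: for a general $\Lambda$ that contains close pairs at scales $\ll|\Lambda|^{-1}$, one first performs a dyadic pigeonhole over the separation scales within $\Lambda$ (equivalently, exploits the parabolic-rescaling invariance of $\E_4$ on $P^1$) to reduce to the $|\Lambda|^{-1}$-separated situation above.

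Combining the three estimates yields $\E_3(\Lambda)\le\sqrt{\E_2\cdot\E_4}\lesssim_\epsilon|\Lambda|^{7/2+\epsilon/2}$, which after renaming $\epsilon$ is the claimed bound. The argument for $S=S^1$ is identical: convexity still forces $r_2\le 2$ (a chord of the circle is determined by its midpoint), and Theorem \ref{thmmmm2} applies to $S^1$ as well.
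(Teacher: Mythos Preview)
Your Cauchy--Schwarz step $\E_3^2\le\E_2\cdot\E_4$ and the convexity bound $\E_2(\Lambda)\le 2|\Lambda|^2$ are correct. The gap is in the $\E_4$ estimate. Theorem~\ref{thmmmm2} carries a $\delta^{1/2}$-separation hypothesis, and the bound it produces depends on that scale: if $\Lambda$ is only $\delta_0^{1/2}$-separated, the $p=8$ case yields $\E_4(\Lambda)\lesssim_\epsilon \delta_0^{-1/2-\epsilon}|\Lambda|^4$, which matches $|\Lambda|^{5+\epsilon}$ only when $\delta_0^{1/2}\sim|\Lambda|^{-1}$. Your appeal to a ``dyadic pigeonhole over separation scales'' is not a proof: for a configuration consisting of two clusters of $|\Lambda|/2$ points, one $|\Lambda|^{-1}$-spaced and the other confined to an interval of length $10^{-|\Lambda|}$, no single scale works, the cross-terms in $\E_4$ do not obviously decouple, and for $S^1$ there is no parabolic rescaling at all, so your parenthetical remedy is simply unavailable there. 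More to the point, once $\Lambda$ is $|\Lambda|^{-C}$-separated for any fixed $C$, Theorem~\ref{thmmmm2} at the critical exponent $p=6$ already gives the \emph{stronger} bound $\E_3\lesssim_\epsilon|\Lambda|^{3+\epsilon}$ (the paper records this right after Question~\ref{qqqq1}). Hence the entire content of the proposition is the non-separated regime, and that is exactly where your argument is incomplete.

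The paper's proof takes a different, separation-free route. It uses the algebraic observation that a solution of $(x_1,x_1^2)+(x_2,x_2^2)+(x_3,x_3^2)=(n,j)$ places the point $(3(x_1+x_2),\sqrt3(x_1-x_2))$ on a circle centered on the $x$-axis with radius determined by $(n,j)$, converting the energy bound into a point--circle incidence problem for a set $T$ of $|\Lambda|^2$ points. Since circles with collinear centers satisfy the two-points-determine-$O(1)$-curves axiom, the Szemer\'edi--Trotter theorem for such curve families gives $M_n 2^{3n}\lesssim |\Lambda|^4$ for the number $M_n$ of circles containing $\sim 2^n$ points of $T$; together with the trivial $M_n 2^n\lesssim |\Lambda|^3$ this yields $M_n 2^{2n}\lesssim|\Lambda|^{7/2}$ and hence $\E_3\lesssim|\Lambda|^{7/2}\log|\Lambda|$. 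No separation hypothesis enters anywhere.
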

\begin{proof}
This was observed  by Bombieri and the first author \cite{BoBo} when $S=S^1$. The proofs for $P^1$ and $S^1$ are very similar, we briefly sketch the details for $S=P^1$. Let $N$ be the cardinality of $\Lambda$.
It goes back to \cite{Bo3} that if
\begin{equation}
\label{EEE5}
(x_1,x_1^2)+(x_2,x_2^2)+(x_3,x_3^2)=(n,j),
\end{equation}
then the point $(3(x_1+x_2),\sqrt{3}(x_1-x_2))$ belongs to the circle centered at $(2n,0)$ and of radius squared equal to $6j-2n^2$.  Note that there are $N^2$ such points with $(x_i,x_i^2)\in \Lambda$, call this set of points $T$. Assume we have $M_n$ such circles containing roughly $2^n$ points  $(3(x_1+x_2),\sqrt{3}(x_1-x_2))\in T$ in such a way that
\eqref{EEE5} is satisfied for some $x_3\in \Lambda$. Then clearly
$$\E_3(S)\lesssim \sum_{2^n\le N}M_n2^{2n}.$$

It is easy to see that
\begin{equation}
\label{EEE6}
M_n2^n\lesssim N^3,
\end{equation}
as each point in $T$ can belong to at most $N$ circles.

The nontrivial estimate is
\begin{equation}
\label{EEE7}
M_n2^{3n}\lesssim N^4,
\end{equation}
which is an immediate consequence of the Szemer\'edi-Trotter Theorem for curves  satisfying the following two fundamental axioms: two curves intersect in $O(1)$ points, and there are $O(1)$ curves passing through any two given points. The number of incidences between such curves and points is the same as in the case of lines and points, see for example Theorem 8.10 in \cite{TV}. Note that since our circles have centers on the $x$ axis, any two points in $T$ sitting in the upper (or lower) half plane determine a unique circle. Combining the two inequalities we get for each $n$
$$M_n2^{2n}\lesssim N^{\frac72}.$$
\end{proof}
\bigskip

In the case when $\Lambda\subset S^1$, the same argument leads to incidences between unit circles and points. The outcome is the same, since for any two points there are at most two unit circles passing through them. An interesting observation is the fact that Question \ref{qqqq1} has a positive answer if the Unit Distance Conjecture is assumed. Indeed, the argument above presents us with a collection $T$ of $N^2$ points and a collection  of $\lesssim N^3$ unit circles. For $2^n\lesssim N$ let $M_n$ be the number of such circles  with $\sim 2^n$ points. There will be at least $M_n2^n$ unit distances among the $N^2$ points and the $M_n$ centers. The Unit Distances Conjecture forces $M_n2^n\lesssim_\epsilon(M_n+N^2)^{1+\epsilon}$. Since $M_n\lesssim N^3$, it immediately follows that $M_n2^{2n}\lesssim_\epsilon N^{3+\epsilon}$ which gives the desired bound on the energy.

It seems likely that in order to achieve the conjectured bound on $\E_3(\Lambda)$, the structure of $T$ must be exploited, paving the way to algebraic methods. One possibility is to make use of the fact that $T$ has sumset structure.   Another interesting angle for the parabola is the following. Recall that whenever \eqref{EEE5} holds, the three points $(3(x_i+x_j),\sqrt{3}(x_i-x_j))$, $(i,j)\in\{(1,2),(2,3),(3,1)\}$, belong to the circle centered at $(2n,0)$ and of radius squared equal to $6j-2n^2$. One can easily check that if fact they form an equilateral triangle! This potentially opens up the new toolbox of symmetries, since, for example, the rotation by $\pi/3$ about the center of any such circle $C$ will preserve $C\cap T$.
\bigskip

\subsection{Additive energies of annular sets}
We start by mentioning a more general version of Theorem \ref{thmmmm2}.

\begin{theorem}
\label{thmmmmmmmmmmmmmmmmmmmmmmmmmmmmmmmmmmmmm3}
Let $S$ be a $C^2$ compact hypersurface in $\R^n$ with positive definite second fundamental form. For each $\theta\in \P_\delta$ let $\Lambda_\theta$ be a collection of  points in $\theta$ and let $\Lambda=\cup_{\theta}\Lambda_\theta$. Then for each $R$- ball $B_{R}$ with $R\gtrsim \delta^{-1}$ we have
$$\|\sum_{\xi\in\Lambda}a_\xi e(x\cdot\xi)\|_{L^{\frac{2(n+1)}{n-1}}(B_{R})}\lesssim_\epsilon \delta^{-\epsilon}(\sum_\theta\|\sum_{\xi\in\Lambda_\theta}a_\xi e(x\cdot\xi)\|_{L^{\frac{2(n+1)}{n-1}}(w_{B_R})}^2)^{1/2}.$$
\end{theorem}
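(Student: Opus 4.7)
The plan is to extract Theorem \ref{thmmmmmmmmmmmmmmmmmmmmmmmmmmmmmmmmmmmmm3} directly from Theorem \ref{thmmm1}, by repeating the passage that converts decoupling into discrete restriction (Theorem \ref{thmmmm2}), but keeping track of several centers per cap. At the critical exponent $p=\frac{2(n+1)}{n-1}$ one has $\frac{n+1}{2p}-\frac{n-1}{4}=0$, so inequality \eqref{EE45} gives
\begin{equation*}
\Bigl(\int_{B_R}|\widehat{gd\sigma}|^{p}\Bigr)^{1/p}\lesssim_\epsilon \delta^{-\epsilon}\Bigl(\sum_{\theta\in\P_\delta}\|\widehat{g_\theta d\sigma}\|_{L^p(w_{B_R})}^2\Bigr)^{1/2}
\end{equation*}
for every $g\colon S\to\C$ and every $R\gtrsim\delta^{-1}$, where $g_\theta=g1_\theta$.

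I would then specialize $g$ to an atomic approximation of $\Lambda$. For a small parameter $\tau>0$, let
\[
g^{(\tau)}=\sum_{\xi\in\Lambda}a_\xi\,\sigma(U(\xi,\tau))^{-1}\,1_{U(\xi,\tau)},
\]
where $U(\xi,\tau)$ is the $\tau$-cap on $S$ centered at $\xi$. The only deviation from the proof of Theorem \ref{thmmmm2} is that several points of $\Lambda$ may now share a single cap $\theta\in\P_\delta$. However, for $\tau$ small enough (depending on $\Lambda$ and the fixed partition $\P_\delta$), every $U(\xi,\tau)$ lies inside the unique $\theta\in\P_\delta$ that contains $\xi$, and the $\theta$-restriction of $g^{(\tau)}$ is precisely
\[
g^{(\tau)}_\theta=\sum_{\xi\in\Lambda_\theta}a_\xi\,\sigma(U(\xi,\tau))^{-1}\,1_{U(\xi,\tau)}.
\]

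Finally I would let $\tau\to 0$. For each fixed $x\in\R^n$,
\[
\widehat{g^{(\tau)}_\theta d\sigma}(x)\ \longrightarrow\ \sum_{\xi\in\Lambda_\theta}a_\xi\,e(\pm x\cdot \xi)
\]
uniformly on compact sets, the sign depending on the Fourier convention and being immaterial since only moduli enter the inequality. Combining the uniform pointwise bound $|\widehat{g^{(\tau)}_\theta d\sigma}(x)|\le\sum_{\xi\in\Lambda_\theta}|a_\xi|$ with the rapid decay of $w_{B_R}$ lets dominated convergence pass to the limit inside both sides of the decoupling inequality above, which yields exactly the statement of the theorem. I do not foresee any genuine obstacle: the full substance is carried by Theorem \ref{thmmm1}, and at the critical index the decoupling loss $\delta^{-\epsilon}$ is exactly what the statement asks for, while the grouping of points inside a given $\Lambda_\theta$ is handled automatically by the weighted $L^p$ norm on the right.
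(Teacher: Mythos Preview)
Your approach is essentially the one the paper uses: approximate the Dirac masses at the points of $\Lambda$ by smooth bumps, apply the localized decoupling inequality from Theorem~\ref{thmmm1} at the critical index (where the exponent of $\delta$ vanishes), and pass to the limit; the paper phrases the last step as ``functions whose Fourier transforms approximate weighted sums of Dirac deltas supported on $\Lambda$'' and then invokes Minkowski to pass from $R\sim\delta^{-1}$ to $R\gtrsim\delta^{-1}$, which you have absorbed into your citation of \eqref{EE45}.

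One small technical point is worth flagging. In the statement, $\Lambda_\theta$ is a collection of points in the \emph{slab} $\theta\subset\A_\delta$, not necessarily on the surface $S$ itself. Your construction $g^{(\tau)}=\sum_\xi a_\xi\,\sigma(U(\xi,\tau))^{-1}1_{U(\xi,\tau)}$ with $U(\xi,\tau)$ a $\tau$-cap \emph{on $S$} centered at $\xi$ presupposes $\xi\in S$; if $\xi\in\theta\setminus S$, the limit you take recovers $e(x\cdot\pi(\xi))$ for the nearest surface point $\pi(\xi)$ rather than $e(x\cdot\xi)$. The fix is cosmetic and is exactly what the paper does: work directly with functions $f$ whose Fourier transforms are smooth bumps in $\A_\delta$ centered at the $\xi$'s (so $\widehat{f_\theta}$ is a sum of bumps at the points of $\Lambda_\theta$), apply the localized form of Theorem~\ref{thmmm1} to $f$, and let the bump scale tend to zero. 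Everything else in your argument goes through unchanged.
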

To see why this holds, note first that the case $R\sim\delta^{-1}$ follows by applying (the localized version of) Theorem \ref{thmmm1} to functions whose Fourier transforms approximate weighted sums of Dirac deltas supported on $\Lambda$. The case $R\gtrsim \delta^{-1}$ then follows using Minkowski's inequality.

For  $R>1$ define
$$A_R=\{\xi\in\R^2:\;R\le |\xi|\le R+R^{-1/3}\}$$
and $A_R'=A_R\cap \Z^2$. We prove the following inequality related to Question \ref{qqqqdfkhernvvt834u2vrny45y1}.
\begin{theorem}
\begin{equation}
\label{invveq1}
\E_3(A_R')\lesssim_\epsilon |A_R'|^{3+\epsilon}.
\end{equation}
\end{theorem}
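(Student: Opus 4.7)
The plan is to apply Theorem \ref{thmmmmmmmmmmmmmmmmmmmmmmmmmmmmmmmmmmmmm3} after rescaling the annulus $A_R$ by $R^{-1}$, which converts it into the $\delta$-neighborhood of $S^1$ with $\delta=R^{-4/3}$. Writing $\E_3(A_R')=\int_{\T^2}|F(x)|^6\,dx$ for $F(x)=\sum_{\xi\in A_R'}e(x\cdot\xi)$ and using the $\Z^2$-periodicity of $F$ to integrate over a ball $B_\rho$ of radius $\rho\gtrsim R^{4/3}$, the decoupling inequality at the critical exponent $p=6$, raised to the sixth power, gives
\bas
\E_3(A_R')\lesssim_\epsilon R^{\epsilon}\Bigl(\sum_\theta \E_3(A_\theta)^{1/3}\Bigr)^3,
\eas
where $A_\theta$ is the set of lattice points of $A_R'$ lying inside the tangent-aligned rectangle of dimensions $\sim R^{1/3}\times R^{-1/3}$ corresponding to the $\delta^{1/2}$-cap $\theta$.

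The second step is to estimate the right hand side by exploiting the essentially one-dimensional arithmetic structure of each $A_\theta$. Because the normal thickness $R^{-1/3}$ of a cap is strictly less than $1$, any two lattice points of $A_\theta$ differ by a lattice vector whose normal projection has magnitude $<1$; this forces the points of $A_\theta$ to lie on a single line parallel to the tangent direction of $\theta$, so $A_\theta$ is (a portion of) an arithmetic progression with common difference $q_\theta$, the norm of the shortest primitive lattice vector aligned with the tangent of $\theta$, and $|A_\theta|\lesssim R^{1/3}/q_\theta$. Combining the trivial bound $\E_3(A_\theta)\le|A_\theta|^5$ with the Farey/Dirichlet count that the number of caps with $q_\theta\sim q$ is itself $\sim q$, one obtains
\bas
\sum_\theta \E_3(A_\theta)^{1/3}\lesssim \sum_{1\le q\lesssim R^{1/3}} q\cdot\Bigl(\frac{R^{1/3}}{q}\Bigr)^{5/3}\sim R^{2/3}.
\eas
Since $|A_R'|$ is of order $R^{2/3}$ by the Gauss circle problem (for $R$ large enough that the error term is negligible), cubing and combining with the decoupling bound yields
$\E_3(A_R')\lesssim_\epsilon R^{\epsilon}|A_R'|^3\lesssim_\epsilon|A_R'|^{3+\epsilon'}$, since $R^\epsilon\sim|A_R'|^{3\epsilon/2}$ is absorbed into $|A_R'|^{O(\epsilon)}$.

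The main obstacle is making the second step rigorous: one must identify, for each cap $\theta$, the appropriate lattice direction and denominator $q_\theta$ via Diophantine approximation of the cap's normal direction at scale $R^{-2/3}$; handle carefully the ``badly approximable'' caps (for which there is no obvious lattice line but $|A_\theta|$ should be $O(1)$); and perform the Farey-style accounting uniformly across the partition $\P_\delta$. A minor secondary issue is the degenerate regime where $|A_R'|$ drops substantially below $R^{2/3}$, where the factor $R^{\epsilon}$ is no longer absorbable into $|A_R'|^{O(\epsilon)}$; there one supplements the decoupling bound with a dyadic pigeonhole argument and the trivial estimate $\E_3(A_R')\le|A_R'|^5$.
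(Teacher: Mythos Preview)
Your strategy is the paper's: rescale, apply Theorem~\ref{thmmmmmmmmmmmmmmmmmmmmmmmmmmmmmmmmmmmmm3}, bound each sector by $\|\sum_{\xi\in A_\alpha'}e(\xi\cdot x)\|_{L^6(\T^2)}\le |A_\alpha'|^{5/6}$, and then count sectors according to how many lattice points they carry. Two pieces of the execution need repair.

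First, the collinearity argument as written is invalid. The fact that all pairwise differences have normal component $<1$ does not force collinearity (an axis-aligned $M\times 0.9$ rectangle contains two rows of lattice points). The paper instead makes the sectors small enough to sit inside rectangles of area $<\tfrac12$ and invokes Jarn\'ik's observation that any nondegenerate lattice triangle has area $\ge\tfrac12$. Note also that the resulting line $L_\alpha$ is \emph{not} parallel to the tangent of $\theta$; it makes an angle $\sim (d\,|A_\alpha'|)^{-1}R^{-1/3}$ with the long axis, where $d$ is the common spacing.

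Second, the counting claim ``the number of caps with $q_\theta\sim q$ is $\sim q$'' is not justified, and parametrizing by the spacing alone is too coarse: a crude count of caps admitting a given primitive difference vector of norm $\sim q$ only gives $\lesssim qR^{1/3}$ such caps, and with that bound the sum does not close. The paper runs a double dyadic decomposition in $|A_\alpha'|\sim 2^s$ and $d\sim 2^m$. Since $L_\alpha$ makes angle $\sim 2^{-m-s}R^{-1/3}$ with the long axis, sectors in $\W_{s,m}$ that are $C\,2^{-m-s}R^{-1/3}$-separated on $S^1$ have \emph{distinct} lattice directions; as there are $O(2^{2m})$ primitive lattice vectors of norm $\sim 2^m$, this yields $|\W_{s,m}|\lesssim 2^mR^{1/3}2^{-s}$, and summing over $m$ with $2^{m+s}\lesssim R^{1/3}$ gives $|\W_s|\lesssim R^{2/3}2^{-2s}$. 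Then $\sum_\alpha|A_\alpha'|^{5/3}\lesssim\sum_s R^{2/3}2^{-s/3}\lesssim R^{2/3}$, which is exactly what is needed.

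Finally, your concern about a degenerate regime $|A_R'|\ll R^{2/3}$ is unnecessary: the Van der Corput bound $|N(R)-\pi R^2|=o(R^{2/3})$ for the Gauss circle problem gives $|A_R'|=2\pi R^{2/3}+o(R^{2/3})$, so $R^\epsilon\lesssim |A_R'|^{O(\epsilon)}$ always.
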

Note that this is essentially sharp.
The old Van der Corput estimate
$$|N(R)-\pi R^2|=o(R^{2/3})$$
for the error term in the Gauss circle problem shows that $|A_R'|=2\pi R^{2/3}+o(R^{2/3})$. It thus suffices to show
$$\|\sum_{\xi\in A_R'}e(\xi\cdot x)\|_{L^6(\T^2)}\lesssim_\epsilon R^{\frac13+\epsilon}.$$
Subdivide $A_R$ into sectors $A_\alpha$ of size $\sim R^{1/3}\times R^{-1/3},$ so that each of them fits inside a rectangle $R_\alpha$ of area  $<\frac12$.  Applying Theorem \ref{thmmmmmmmmmmmmmmmmmmmmmmmmmmmmmmmmmmmmm3} after rescaling by $R$ and using periodicity we get
\begin{equation}
\label{kpty90-u8ivinhb8muviuhyj,[c vhgbphuiot579}
\|\sum_{\xi\in A_R'}e(\xi\cdot x)\|_{L^6(\T^2)}\lesssim_\epsilon R^\epsilon(\sum_\alpha\|\sum_{\xi\in A_\alpha'}e(\xi\cdot x)\|_{L^6(\T^2)}^2)^{1/2},
\end{equation}
with $A_\alpha'=A_\alpha\cap \Z^2$.

An elementary observation which goes back (at least) to Jarn\'ik's work \cite{Jar} is the fact that the area determined by a nondegenerate triangle with vertices in $\Z^2$ is half an integer. It follows that the points in each $A_\alpha'$ lie on a line $L_\alpha$. In fact they must be equidistant, with consecutive points at distant $d$, for some $d\ge 1$. Define now for $1\le 2^s\lesssim R^{1/3}$
$$\W_s=\{\alpha:2^s\le |A_\alpha'|<2^{s+1}\}.$$
Let also $\W_{s,m}$ be those $\alpha\in \W_s$ for which $2^m\le d<2^{m+1}$. Note that if $\alpha\in\W_{s,m}$ then  $L_\alpha$ makes an angle $\sim 2^{-m}2^{-s}R^{-1/3}$ with the long axis of $R_\alpha$. Thus the directions of the lines $L_\alpha$ will be distinct for each collection of $\alpha\in\W_{s,m}$ whose corresponding arcs on $S^1$ are $C2^{-m}2^{-s}R^{-1/3}$-separated.
Obviously there are $u,v\in A_\alpha'$ such that $|u-v|\sim 2^m$. Since there are $O(2^{2m})$ lattice points with length $\sim 2^m$, it follows that there can  be at most $O(2^{2m})$ elements $\alpha$ in $\W_{s,m}$ which are $C2^{-m}2^{-s}R^{-1/3}$-separated. Thus $|\W_{s,m}|\lesssim 2^{2m}R^{1/3}2^{-m}2^{-s}$. As $2^{m+s}\lesssim R^{1/3}$ we conclude that $|\W_s|\lesssim R^{2/3}2^{-2s}$.

Note that by using H\"older's inequality with $L^2-L^\infty$ endpoints we have
$$\|\sum_{\xi\in A_\alpha'}e(\xi\cdot x)\|_{L^6(\T^2)}\le |A_\alpha'|^{5/6}.$$
Using this, the bound on $|\W_s|$ and \eqref{kpty90-u8ivinhb8muviuhyj,[c vhgbphuiot579} finishes the proof of \eqref{invveq1}.

\subsection{Counting solutions of Diophantine inequalities}
In this section we show how to use the Decoupling Theorem to recover and generalize results from the literature as well as to prove some new type of results. We do not aim at providing a systematic study of these problems but rather to explain the way our methods become useful in this context.

To motivate our first application  we consider the system of equations for $k\ge 2$
$$\begin{cases}n_1^k+n_2^k+n_3^k&=n_4^k+n_5^k+n_6^k \\ \hfill n_1+n_2+n_3&=n_4+n_5+n_6  \end{cases},$$
with $1\le n_i\le N$.
It is easy to see that there are $6N^3$ trivial solutions. The question here is to determine the correct asymptotic for the number $U_k(N)$ of nontrivial solutions. This is in part motivated by connections to the Waring problem, see \cite{Bok}. The case $k=3$ known as the Segre cubic has been intensely studied. Vaughan and Wooley have proved in \cite{VauWol} that $U_3(N)\sim N^2(\log N)^5$ , see also \cite{Bre} for a more precise result. For $k\ge 4$, Greaves \cite{Gre} (see also \cite{SkW}) has proved that $U_k(N)=O(N^{\frac{17}{6}+\epsilon})$. All these results follow through the use of rather delicate Number Theory.

While our methods in this paper can not produce such fine estimates, they successfully address the perturbed case. The following result is perhaps a surprising consequence of Theorem \ref{thmmmm2}.
\begin{theorem}
\label{thmmnumberofsoll}
For fixed $k\ge 2$ and $C$ the system
$$\begin{cases}&|n_1^k+n_2^k+n_3^k-n_4^k-n_5^k-n_6^k|\le CN^{k-2} \\  \hfill &n_1+n_2+n_3=n_4+n_5+n_6\end{cases}$$
has  $O(N^{3+\epsilon})$ solutions with $n_i\sim N$.
\end{theorem}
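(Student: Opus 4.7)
The plan is to realize $T(N)$ as an integrated sixth power of an exponential sum and then apply Theorem \ref{thmmmm2} to the plane curve $(s,s^k)$, $s\in[1,2]$. I would set $h(x,y)=\sum_{n\sim N}e(xn+yn^k)$ and pick a nonnegative even Schwartz function $\psi$ on $\R$ whose Fourier transform dominates $\mathbf{1}_{[-CN^{k-2},CN^{k-2}]}$ and which is essentially supported on $|y|\lesssim N^{2-k}$. Expanding $|h(x,y)|^6$, integration in $x$ over $[0,1]$ enforces the exact linear equation $\sum_1^3 n_i=\sum_4^6 n_i$ since the $n_i$ are integers, while integration against $\psi(y)\,dy$ attaches a nonnegative weight that is $\ge 1$ precisely when $|\sum_1^3 n_i^k-\sum_4^6 n_i^k|\le CN^{k-2}$. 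Hence
\[
T(N)\lesssim \int_0^1\int_\R |h(x,y)|^6\,\psi(y)\,dy\,dx.
\]

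Next I would rescale by $u_1=Nx$, $u_2=N^k y$. Then $h(x,y)=F(u_1,u_2):=\sum_{\xi\in\Lambda}e(u\cdot\xi)$, where $\Lambda=\{(n/N,(n/N)^k):n\sim N\}$ is a $\delta^{1/2}$-separated set (with $\delta=N^{-2}$) on the compact $C^2$ plane curve $S=\{(s,s^k):s\in[1,2]\}$, which has positive curvature. The Jacobian gives $dx\,dy=N^{-(k+1)}\,du_1\,du_2$; using $\|\psi\|_\infty\sim N^{k-2}$ together with a standard dyadic decomposition of the rapid-decay tails of $\psi$, one obtains
\[
T(N)\lesssim N^{-3}\int_{[0,N]\times[-N^2,N^2]}|F(u)|^6\,du.
\]
The key geometric observation is that $F(u_1+N,u_2)=F(u_1,u_2)$, because the first coordinates of $\Lambda$ lie in $\tfrac{1}{N}\Z$. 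This periodicity lets one convert the elongated rectangle integral into one over a square that fits inside a ball of radius $\sim N^2$:
\[
\int_{[0,N]\times[-N^2,N^2]}|F|^6=\frac{1}{N}\int_{[0,N^2]\times[-N^2,N^2]}|F|^6\le\frac{1}{N}\int_{B_{O(N^2)}}|F|^6.
\]
Now Theorem \ref{thmmmm2} with $n=2$, $p=\frac{2(n+1)}{n-1}=6$, $R\sim N^2=\delta^{-1}$ and constant weights $a_\xi\equiv 1$ (so $\|a_\xi\|_2\sim N^{1/2}$) gives $\int_{B_{O(N^2)}}|F|^6\lesssim N^{7+\epsilon}$, and combining the displayed estimates yields the desired
\[
T(N)\lesssim N^{-3}\cdot N^{-1}\cdot N^{7+\epsilon}=N^{3+\epsilon}.
\]

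The heart of the argument — and the place where one could easily lose the sharp exponent — is the periodicity reduction: applying Theorem \ref{thmmmm2} naively to a ball $B_{N^2}$ containing the rectangle $[0,N]\times[-N^2,N^2]$ produces only $T(N)\lesssim N^{4+\epsilon}$, because the ball has area $N^4$ while the rectangle has area $N^3$. Recognising that $F$ lives on the cylinder $(\R/N\Z)\times\R$ and thereby recovering this factor of $N$ is what closes the gap to the sharp $N^{3+\epsilon}$. The remaining technicalities (the construction of $\psi$ with nonnegative Fourier transform bounded below on $[-CN^{k-2},CN^{k-2}]$, the trivial positivity of the second derivative of $s\mapsto s^k$ on $[1,2]$, and the dyadic bookkeeping for the Schwartz tails of $\psi$) are routine.
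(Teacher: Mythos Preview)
Your proposal is correct and follows essentially the same route as the paper: apply Theorem~\ref{thmmmm2} at $p=6$ to the $\frac1N$-separated points $\Lambda=\{(n/N,(n/N)^k):n\sim N\}$ on the curve $\{(s,s^k):s\sim 1\}$ with $\delta=N^{-2}$, use periodicity in the first variable to pass between the rectangle $[0,N]\times[-N^2,N^2]$ and a ball of radius $\sim N^2$, and then convert between the $L^6$ integral and the solution count via a suitable Schwartz weight. The only difference is the order of presentation---the paper first derives the bound $N^{k-2}\int_{|x|\le 1}\int_{|y|\le N^{2-k}}|h|^6\lesssim_\epsilon N^{3+\epsilon}$ and then inserts the Schwartz majorant, while you start from the counting side---but the ingredients and the arithmetic are identical.
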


\begin{proof}
Apply Theorem \ref{thmmmm2} to the curve $$\{(\xi,\xi^k):\;|\xi|\sim 1\},$$ the points $$\Lambda=\{(\frac{n}{N},(\frac{n}{N})^k):\;n\sim N\}$$
 and $\delta=N^{-2}$. We get that
 $$\frac1{N^4}\int_{|x|\le N^2}\int_{|y|\le N^2}|\sum_{n\sim  N}e(x\frac{n}{N}+y(\frac{n}{N})^k)|^6dxdy\lesssim_\epsilon N^{3+\epsilon}.$$
 Upon rescaling and using periodicity we get
 $$N^{k-3}\int_{|x|\le N}\int_{|y|\le N^{2-k}}|\sum_{n\sim  N}e(x n+y n^k)|^6dxdy=$$
 \begin{equation}
 \label{dttty555534ssde}
 N^{k-2}\int_{|x|\le 1}\int_{|y|\le N^{2-k}}|\sum_{n\sim  N}e(x n+y n^k)|^6dxdy\lesssim_\epsilon N^{3+\epsilon}.
 \end{equation}
 Let now $\phi:\R\to [0,\infty)$ be a positive Schwartz function with  positive Fourier transform satisfying $\widehat{\phi}(\xi)\gtrsim 1$ for $|\xi|\le 1$. Define $\phi_{N}(y)=\phi(N^{k-2}y)$. A standard argument allows us to replace the cutoff $|y|\le N^{2-k}$ with $\phi_{N}(y)$ in \eqref{dttty555534ssde}. It suffices then to note that
 $$N^{k-2}\int_{|x|\le 1}\int_\R|\sum_{n\sim  N}e(x n+y n^k)|^6\phi_N(y)dxdy=$$$$\sum_{n_i\sim N\atop{n_1+n_2+n_3=n_4+n_5+n_6}}\widehat{\phi}(N^{2-k}(n_1^k+n_2^k+n_3^k-n_4^k-n_5^k-n_6^k)).$$
 \end{proof}

Note also that our method proves that
$$N^{k-2}\int_{|x|\le 1}\int_{|y-c|\le N^{2-k}}|\sum_{n\sim  N}e(x n+y n^k)|^6dxdy\lesssim_\epsilon N^{3+\epsilon},$$
for each $c\in\R$.
The difficulty in proving this for $k\ge 3$ using purely number theoretic methods comes from estimating the contribution of the minor arcs.  When $k=2$ the left hand side is at least $cN^3\log N$, which shows that one can not dispense with the $N^\epsilon$ term.  This can be seen by evaluating the contribution from the major arcs, see for example page 118 in \cite{Bo3}.
\bigskip

Our second application generalizes the result from \cite{RoSa} ($k=4$) to $k\ge 4$. Its original motivation lies in the study of the Riemann zeta function on the critical line (cf. \cite{BoIw1}, \cite{BoIw2}) and also in getting refinements of Heath-Brown's variant of Weyl's inequality, see \cite{RoSa}.
\begin{theorem}
\label{ggg555rrrttt778ghtr}
For $k\ge 4$ and $0\le \lambda\le 1$ we have
$$\int_{|x|\le 1}\int_{0}^{\lambda}|\sum_{n\sim N}e(xn^2+yn^k)|^6dxdy\lesssim_{\epsilon}\lambda N^{3+\epsilon}+N^{4-k+\epsilon}.$$
In particular, the system
$$\begin{cases}&|n_1^k+n_2^k+n_3^k-n_4^k-n_5^k-n_6^k|\le CN^{k-1} \\  \hfill &n_1^2+n_2^2+n_3^2=n_4^2+n_5^2+n_6^2\end{cases}$$
has  $O(N^{3+\epsilon})$ solutions with $n_i\sim N$.
\end{theorem}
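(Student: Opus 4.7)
The plan is to mimic the proof of Theorem \ref{thmmnumberofsoll}: apply Theorem \ref{thmmmm2} (in the discrete form of Theorem \ref{thmmmmmmmmmmmmmmmmmmmmmmmmmmmmmmmmmmmmm3}) to the curve $\{(s,s^{k/2}):s\in[1,4]\}\subset\R^2$, which has positive definite second fundamental form for $k\ge 4$, and to the set $\Lambda=\{(n^2/N^2,n^k/N^k):n\sim N\}$ of $\sim 1/N$-separated frequencies on it. Taking $\delta=1/N^2$ and the critical $p=6$, this produces
\begin{equation*}
\int_{B_R}\Bigl|\sum_{n\sim N}e\Bigl(u\,\tfrac{n^2}{N^2}+v\,\tfrac{n^k}{N^k}\Bigr)\Bigr|^6du\,dv\lesssim_\epsilon R^2 N^{3+\epsilon},\qquad R\ge N^2.
\end{equation*}

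Rescaling $u=N^2 x$, $v=N^k y$ (Jacobian $N^{k+2}$), the region $[-1,1]\times[0,\lambda]$ in $(x,y)$ pulls back to the rectangle $[-N^2,N^2]\times[0,\lambda N^k]$ in $(u,v)$. When $\lambda\ge N^{2-k}$, I would cover this rectangle by $\sim\lambda N^{k-2}$ balls of radius $N^2$, each contributing $\lesssim_\epsilon N^{7+\epsilon}$ by the displayed inequality, and summation produces $I(\lambda)\lesssim\lambda N^{3+\epsilon}$. Specializing to $\lambda=1$ gives $I(1)\lesssim N^{3+\epsilon}$, and Parseval in $x$ supplies the background bound $N_{22}:=\#\{(n_i,m_i):n_i,m_i\sim N,\sum n_i^2=\sum m_i^2,\sum n_i^k=\sum m_i^k\}\lesssim N^{3+\epsilon}$.

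To produce the additive $N^{4-k+\epsilon}$ term which dominates for small $\lambda$, I would invoke Parseval in $x$ to rewrite $I(\lambda)=2\int_0^\lambda G(y)\,dy$ with $G(y):=\sum_{(n_i,m_i):\Delta_2=0}e(y\Delta_k)$ and $\Delta_j:=\sum_i(n_i^j-m_i^j)$. Since $G\ge 0$ with $G(0)=N_0\lesssim N^{4+\epsilon}$ (the classical sixth moment for squares, e.g.\ from $r_3(m)\lesssim_\epsilon m^{\epsilon}\sqrt{m}$), the crude bound $G(y)\le G(0)$ yields $I(\lambda)\lesssim\lambda N^{4+\epsilon}$, which is $\lesssim N^{4-k+\epsilon}$ for $\lambda\le N^{-k}$. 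The hard part will be the intermediate range $\lambda\in[N^{-k},N^{2-k}]$, where neither the covering decoupling (which gives only $N^{5-k+\epsilon}$ via a single ball) nor this trivial Parseval bound is sharp. I would resolve it by splitting $G=N_{22}+\widetilde G$: the constant term gives $2\lambda N_{22}\lesssim\lambda N^{3+\epsilon}$, and the oscillatory remainder
\begin{equation*}
\int_0^\lambda\widetilde G\,dy=\sum_{\Delta_2=0,\,\Delta_k\ne 0}\frac{e(\lambda\Delta_k)-1}{2\pi i\Delta_k}
\end{equation*}
must be bounded by $N^{4-k+\epsilon}$, via a dyadic decomposition in $|\Delta_k|$ combined with the fiber-count estimate $\#\{\Delta_2=0,|\Delta_k|\le K\}\lesssim N^{3+\epsilon}+KN^{4-k+\epsilon}$ for $K\ge 1$. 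The fiber count itself is obtained by running the decoupling-plus-Plancherel argument at scale $\lambda\sim 1/K$; setting this up as a bootstrap coupled with the $K=N^{k-1}$ case (where both terms in the fiber bound coincide at $N^{3+\epsilon}$) is the technical heart of the proof.

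The counting statement then follows by the smoothing argument of Theorem \ref{thmmnumberofsoll}: choosing Schwartz $\phi\ge 0$ with $\widehat\phi\ge 1$ on $[-1,1]$ and $\phi_N(y):=\phi(N^{k-1}y)$, one has
\begin{equation*}
\#\{(n_i,m_i):\Delta_2=0,\,|\Delta_k|\le CN^{k-1}\}\lesssim N^{k-1}\int_{|x|\le 1}\int_\R\Bigl|\sum_{n\sim N}e(xn^2+yn^k)\Bigr|^6\phi_N(y)\,dx\,dy,
\end{equation*}
which is $\lesssim N^{k-1}\cdot I(cN^{1-k})\lesssim N^{k-1}(N^{1-k}N^{3+\epsilon}+N^{4-k+\epsilon})\lesssim N^{3+\epsilon}$ by the first part, as claimed.
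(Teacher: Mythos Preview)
Your direct decoupling at scale $\delta=N^{-2}$ on the curve $\{(s,s^{k/2})\}$ is clean and correctly yields $I(\lambda)\lesssim_\epsilon\lambda N^{3+\epsilon}$ for $\lambda\ge N^{2-k}$; this part is fine and in fact simpler than what the paper does. The problem is the intermediate range $\lambda\in[N^{-k},N^{2-k}]$.

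The bootstrap you sketch is circular and does not close. The fiber-count estimate $\#\{\Delta_2=0,|\Delta_k|\le K\}\lesssim N^{3+\epsilon}+KN^{4-k+\epsilon}$ is, via the smoothing argument you describe, \emph{equivalent} to the integral estimate $I(1/K)\lesssim K^{-1}N^{3+\epsilon}+N^{4-k+\epsilon}$. Starting from the weaker input $I(\lambda)\lesssim\lambda N^{3+\epsilon}+N^{a+\epsilon}$ (your single-ball bound gives $a=5-k$), smoothing produces $\#\{\Delta_2=0,|\Delta_k|\le K\}\lesssim N^{3+\epsilon}+KN^{a+\epsilon}$, and feeding this back into your dyadic decomposition of the oscillatory remainder reproduces exactly $I(\lambda)\lesssim\lambda N^{3+\epsilon}+N^{a+\epsilon}$: the exponent $a$ never moves. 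The reference to the ``$K=N^{k-1}$ case'' does not help, since the fiber count there is precisely what you are trying to prove (it is equivalent to $I(N^{1-k})\lesssim N^{4-k+\epsilon}$). None of what you have established---the bound $N_{22}\lesssim N^{3+\epsilon}$, the bound $N_0\lesssim N^{4+\epsilon}$, the sharp fiber count for $K\le N^{k-2}$---suffices to bridge the gap; inserting the trivial bound $N_0$ for $K>N^{k-2}$ into the dyadic sum only gives $N^{6-k+\epsilon}$.

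The paper's proof uses a genuinely different mechanism. Rather than decoupling at scale $\delta=N^{-2}$, it applies Theorem~\ref{thmmmmmmmmmmmmmmmmmmmmmmmmmmmmmmmmmmmmm3} at the \emph{coarser} scale $\delta=N^{-1}$ on balls of radius $N$ (after rescaling, these correspond to $|x|\le 1$ and $y$ in an interval $J$ of length $N^{1-k}$). This decouples the full sum into sub-sums over intervals $I_\alpha$ of length $N^{1/2}$. On each $I_\alpha$ one Taylor-expands $n^k=(n_\alpha+m)^k$ to second order in $m$; the cubic and higher terms vary by $O(N^{-1/2})$ over $y\in J$ and contribute an $O(1)$ error to the sum, and a linear change of variables in $(x,y)$ converts the remaining phase into a parabolic sum $\sum_m c_m e(x'm^2+y'm)$ of length $N^{1/2}$ over a ball of radius $O(1)$. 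Theorem~\ref{rkjgtugfm45nbv 907u90[mq9tu8}(ii) then gives the sharp $L^6$ bound for each piece, and recombining via the $\ell^2$ sum over $\alpha$ yields
\[
\int_{|x|\le 1}\int_J\Bigl|\sum_{n\sim N}e(xn^2+yn^k)\Bigr|^6\,dx\,dy\lesssim_\epsilon N^{4-k+\epsilon}.
\]
Summing over $O(\max(1,\lambda N^{k-1}))$ such intervals $J$ gives the stated estimate. This two-scale argument---coarse decoupling on the curve $(\xi^2,\xi^k)$ followed by Taylor reduction to the parabola on each short arc---is the missing idea in your approach.
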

\begin{proof}
The estimate on the number of solutions follows by using $\lambda=N^{1-k}$. Note that it suffices to prove that
$$\int_{|x|\le 1}\int_{J}|\sum_{n\sim N}e(xn^2+yn^k)|^6dxdy\lesssim_{\epsilon}N^{4-k+\epsilon},$$
for each interval $J$ with length $N^{1-k}$.

We apply Theorem \ref{thmmmmmmmmmmmmmmmmmmmmmmmmmmmmmmmmmmmmm3} to the curve
$$\{(\xi^2,\xi^k):\;|\xi|\sim 1\},$$ the points $$\Lambda=\{((\frac{n}{N})^2,(\frac{n}{N})^k):\;n\sim N\},$$
 $R^{-1}=\delta=N^{-1}$ and $B_N=[MN,(M+1)N]\times N^kJ$ with $M\in\{-N,\ldots,0,\ldots, N-1\}$. Summing over $M$ we get due to  periodicity
$$\|\sum_{n\sim N}e(x'\frac{n^2}{N^2}+y'\frac{n^k}{N^k})\|_{L^6(|x'|\le N^2,\;y'\in N^kJ)}\lesssim_{\epsilon}$$$$N^\epsilon(\sum_\alpha\|\sum_{n\in I_\alpha}e(x'\frac{n^2}{N^2}+y'\frac{n^k}{N^k})\|_{L^6(|x'|\le N^2,\;y'\in N^kJ)}^2)^{1/2}.$$
Here $I_\alpha=[n_\alpha,n_\alpha+N^{1/2}]$ are intervals of length $N^{1/2}$ that partition the integers $n\sim N$. It follows after a change of variables that

$$\|\sum_{n\sim N}e(x{n^2}+y{n^k})\|_{L^6(|x|\le 1,\;y\in J)}\lesssim_{\epsilon}$$
\begin{equation}
\label{juy45t7f904tbg707f8-dk90f78gr5jtu90fjrtjho}
N^\epsilon(\sum_\alpha\|\sum_{n\in I_\alpha}e(x{n^2}+y{n^k})\|_{L^6(|x|\le 1,\;y\in J)}^2)^{1/2}.
\end{equation}
Next note that for $y\in J$
$$|\sum_{n\in I_\alpha}e(x{n^2}+y{n^k})|=$$
$$|\sum_{m=1}^{N^{1/2}}c_{m,J,n_\alpha}e(m^2(x+\frac{k(k-1)}{2}n_\alpha^{k-2}y)+m(2xn_\alpha+kn_\alpha^{k-1}y))|+O(1),$$
with $|c_{m,J,n_\alpha}|=1$.  To estimate the first term we change variables to
$$\begin{cases}&x'=x +\frac{k(k-1)}{2}n_\alpha^{k-2}y \\ &y'=(2k-k^2)n_\alpha^{k-1}y \hfill \end{cases}.$$
We get
$$\|\sum_{n\in I_\alpha}e(x{n^2}+y{n^k})\|_{L^6(|x|\le 1,\;y\in J)}\lesssim $$$$n_\alpha^{-\frac{k-1}{6}} \|\sum_{m=1}^{N^{1/2}}c_{m,J,n_\alpha}e(x'm^2+2x'n_\alpha m+ my')\|_{L^6(B_C)}+O(N^{\frac{1-k}{6}})=$$
$$n_\alpha^{-\frac{k-1}{6}} \|\sum_{m=1+n_\alpha}^{N^{1/2}+n_\alpha}c_{m,J,n_\alpha}e(x'm^2+my')\|_{L^6(B_C)}+O(N^{\frac{1-k}{6}}),$$
for some ball $B_C$ of radius $C=O(1)$.
This can further be seen to be $O(N^{\frac14+\frac{1-k}{6}+\epsilon})$ by the result in Theorem \ref{rkjgtugfm45nbv 907u90[mq9tu8}. We conclude that  \eqref{juy45t7f904tbg707f8-dk90f78gr5jtu90fjrtjho} is $O(N^{\frac12+\frac{1-k}{6}+\epsilon})$, as desired.

\end{proof}

There are further number theoretical consequences of the decoupling theory that will be investigated elsewhere.

\section{Norms and wave packet decompositions}\label{s3}
\bigskip

We will use $C$ to denote various constants that are allowed to depend on the fixed parameters $n,p$, but never on the scale $\delta$.
 $|\cdot|$ will denote both the Lebesgue measure on $\R^n$ and the cardinality of finite sets.

This section and the next one is concerned with introducing some of the tools that will be used in the proof of Theorem \ref{thmmm1} from Section \ref{s8}.
For $2\le p\le \infty$  we define the norm
$$\|f\|_{p,\delta}=(\sum_{\theta\in\P_\delta}\|f_\theta\|_p^2)^{1/2},$$
where $f_\theta$ is the Fourier restriction of $f$ to $\theta$.
We note the following immediate consequence of H\"older's inequality
\begin{equation}
\label{EE1}
\|f\|_{p,\delta}\le \|f\|_{2,\delta}^{\frac2p}\|f\|_{\infty,\delta}^{1-\frac2p}
\end{equation}
and the fact that if $\supp(\hat{f})\subset \A_\delta$ then
$$\|f\|_{2,\delta}\sim\|f\|_2.$$

\begin{definition}
Let $N$ be a real number greater than 1. An $N$-tube $T$ is an $N^{1/2}\times\ldots \times N^{1/2}\times N$  rectangular parallelepiped in $\R^n$ which has dual orientation to some $\theta=\theta(T)\in \P_\delta$. We call a collection of $N$-tubes  separated if no more than $C$ tubes with a given orientation overlap.
\end{definition}
Let $\phi:\R^n\to\R$ be given by
$$\phi(x)=(1+|x|^2)^{-M},$$
for some $M$ large enough compared to $n$, whose value will become clear from the argument.
Define $\phi_T=\phi\circ a_T$, where $a_T$ is the affine function mapping $T$ to the unit cube in $\R^n$ centered at the origin.

\begin{definition}
An $N$-function is a function $f:\R^n\to \C$ such that
$$f=\sum_{T\in \H(f)}f_T$$
where $\H(f)$ consists of finitely many separated $N$-tubes $T$ and moreover
$$|f_T|\le \phi_T,$$
$$\|f_T\|_p\sim |T|^{1/p},\;1\le p\le\infty$$
and
$$\supp (\widehat{f_T})\subset \theta(T).$$

For $\theta\in \P_{1/N}$ let
$\H(f,\theta)$ denote the $N$-tubes in $\H(f)$ dual to $\theta$ .
An $N$-function is called balanced if
$|\H(f,\theta)|\le 2 |\H(f,\theta')|$
whenever $\H(f,\theta), \H(f,\theta')\not=\emptyset.$
\end{definition}
The $\|\cdot\|_{p,\delta}$ norms of $N$-functions are asymptotically determined by their  plate distribution over the sectors $\theta$.
\begin{lemma}
For each $N$-function $f$ and for  $2\le p\le \infty$
\begin{equation}
\label{EE2}
\|f\|_{p,1/N}\sim N^{\frac{n+1}{2p}}(\sum_{\theta}|\H(f,\theta)|^{\frac2p})^{1/2}.
\end{equation}
If the $N$-function is balanced then
\begin{equation}
\label{EE24}
\|f\|_{p,1/N}\sim N^{\frac{n+1}{2p}}M(f)^{\frac{1}{2}-\frac1p}|\H(f)|^{1/p},
\end{equation}
where $M(f)$ is the number of sectors $\theta$ for which $\H(f,\theta)\not=\emptyset$.
\end{lemma}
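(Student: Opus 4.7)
The plan is to compute $\|f_\theta\|_p$ for each $\theta \in \P_{1/N}$ and then aggregate over $\theta$. First, since each wave packet $f_T$ satisfies $\supp(\widehat{f_T}) \subset \theta(T)$, the Fourier restriction of $f$ to $\theta$ is exactly
$$f_\theta \;=\; \sum_{T \in \H(f,\theta)} f_T.$$

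The heart of the argument is to show that for $2 \le p < \infty$,
$$\|f_\theta\|_p^p \;\sim\; \sum_{T \in \H(f,\theta)} \|f_T\|_p^p \;\sim\; |\H(f,\theta)|\cdot N^{(n+1)/2},$$
while $\|f_\theta\|_\infty \sim 1$ when $\H(f,\theta) \neq \emptyset$. The second equivalence is immediate from the hypothesis $\|f_T\|_p \sim |T|^{1/p}$ together with $|T| = N^{(n+1)/2}$. The first equivalence uses that tubes in $\H(f,\theta)$ share a common orientation, are $C$-separated (so they overlap boundedly), and each $f_T$ is pointwise bounded by $\phi_T$, which is Schwartz-adapted to $T$. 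The collection $\{f_T\}_{T\in\H(f,\theta)}$ therefore behaves essentially as a disjointly supported family: the upper bound follows by a standard nested-sums argument combining bounded overlap with the rapid decay of $\phi$, and the lower bound exploits the fact that $\|f_T\|_p \sim |T|^{1/p}$ pins $|f_T|$ to be of order one on a constant fraction of $T$, so the tails from neighboring packets cannot destroy the diagonal contribution.

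Combining this with the definition of $\|\cdot\|_{p,1/N}$ yields
$$\|f\|_{p,1/N}^2 \;=\; \sum_\theta \|f_\theta\|_p^2 \;\sim\; N^{(n+1)/p}\sum_{\theta}|\H(f,\theta)|^{2/p},$$
which after taking square roots is precisely \eqref{EE2}. For the balanced case, the hypothesis $|\H(f,\theta)|\le 2|\H(f,\theta')|$ whenever both are nonempty forces $|\H(f,\theta)| \sim |\H(f)|/M(f)$ uniformly over the $M(f)$ active sectors. Substituting into \eqref{EE2} gives
$$\|f\|_{p,1/N} \;\sim\; N^{(n+1)/(2p)} \bigl(M(f)\cdot(|\H(f)|/M(f))^{2/p}\bigr)^{1/2} \;=\; N^{(n+1)/(2p)}\,M(f)^{1/2-1/p}\,|\H(f)|^{1/p},$$
which is \eqref{EE24}.

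The main technical obstacle is the equivalence $\|f_\theta\|_p^p \sim \sum_T \|f_T\|_p^p$. The upper bound is routine given separation and Schwartz decay, but the lower direction must rule out destructive interference among wave packets that share Fourier support in $\theta$; here the quantitative lower bound $\|f_T\|_p \sim |T|^{1/p}$ is essential, since without it the hypothesis $|f_T|\le\phi_T$ alone would be compatible with mass cancellation. The endpoint $p = \infty$ should be read separately: there $N^{(n+1)/(2p)}=1$ and $|\H(f,\theta)|^{2/p}=1$ for each nonempty sector, so the formula collapses to $\|f\|_{\infty,1/N} \sim M(f)^{1/2}$, consistent with both \eqref{EE2} and \eqref{EE24}.
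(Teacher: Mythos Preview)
Your approach is correct in outline but differs from the paper's. You argue the equivalence $\|f_\theta\|_p^p \sim \sum_{T\in\H(f,\theta)}\|f_T\|_p^p$ directly for each $p$, which forces you to confront the interference issue for the lower bound at every exponent. The paper instead reduces to a single $\theta$ and records only the three ``easy'' estimates
\[
\|f_\theta\|_1 \lesssim |T|\,|\H(f,\theta)|,\qquad \|f_\theta\|_\infty \lesssim 1,\qquad \|f_\theta\|_2 \sim |T|^{1/2}|\H(f,\theta)|^{1/2},
\]
and then uses H\"older twice:
\[
\|f_\theta\|_2^{\frac{2(p-1)}{p}}\|f_\theta\|_1^{\frac{2-p}{p}} \le \|f_\theta\|_p \le \|f_\theta\|_1^{1/p}\|f_\theta\|_\infty^{1-1/p}.
\]
The upper bound needs only the $L^1$ and $L^\infty$ upper bounds; the lower bound needs the $L^2$ \emph{lower} bound and the $L^1$ \emph{upper} bound (note the exponent $(2-p)/p$ is nonpositive). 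Thus the only place cancellation must be ruled out is at $p=2$, where it follows from near-orthogonality of the $\phi_T$ once the tubes have bounded overlap and $M$ is large. Your sketch (``tails cannot destroy the diagonal contribution'') is morally this same almost-orthogonality, but carried out at general $p$ it is more work; the paper's interpolation confines the issue to the single easiest exponent. Your derivation of \eqref{EE24} from \eqref{EE2} via the balanced condition matches the paper exactly.
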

\begin{proof}
It suffices to prove \eqref{EE2} when  $\H(f)=\H(f,\theta)$ for some $\theta$. We first observe the trivial estimates $\|f\|_1\lesssim |T||\H(f)|$, $\|f\|_\infty\lesssim 1$ and $\|f\|_2\sim |T|^{1/2}|\H(f)|^{1/2}$. Applying H\"older's inequality twice we get
$$\|f\|_2^{\frac{2(p-1)}{p}}\|f\|_1^{\frac{2-p}{p}}\le\|f\|_p\le \|f\|_1^{1/p}\|f\|_\infty^{1/p'},$$
which is exactly what we want.
\end{proof}
The crucial role played by balanced $N$-functions is encoded by
\begin{lemma}
\label{lemmabalancednfunc}

(i) Each $N$-function $f$ can be written as the sum of $O(\log|\H(f)|)$ balanced $N$-functions.

(ii) For each balanced $N$-function $f$ and $2\le p\le \infty$ we have the converse of \eqref{EE1}, namely
\begin{equation}
\label{EE10}
\|f\|_{p,1/N} \sim \|f\|_{2,1/N}^{\frac2p}\|f\|_{\infty,1/N}^{1-\frac2p}.
\end{equation}
\end{lemma}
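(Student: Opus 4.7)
\medskip

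\noindent \textbf{Proof plan.} Both parts are essentially computations based on the formulas \eqref{EE2} and \eqref{EE24}, so the plan is to reduce (ii) to \eqref{EE24} and to prove (i) by a dyadic pigeonholing on the sector counts $|\H(f,\theta)|$.

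For (i), the idea is to sort the sectors $\theta \in \P_{1/N}$ by the dyadic size of $|\H(f,\theta)|$. Concretely, for each integer $k$ with $0 \le k \le \lfloor \log_2 |\H(f)| \rfloor$, set
\[
\Theta_k = \{\theta\in\P_{1/N} : 2^k \le |\H(f,\theta)| < 2^{k+1}\},\qquad f^{(k)} = \sum_{\theta\in\Theta_k}\sum_{T\in\H(f,\theta)}f_T.
\]
Each $f^{(k)}$ inherits the separation, decay, and Fourier support properties required to be an $N$-function, and by construction any two nonempty $\H(f^{(k)},\theta),\H(f^{(k)},\theta')$ satisfy $|\H(f^{(k)},\theta)|\le 2|\H(f^{(k)},\theta')|$, so $f^{(k)}$ is balanced. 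Since $f=\sum_k f^{(k)}$ with $O(\log|\H(f)|)$ summands, (i) follows.

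For (ii), the plan is to plug \eqref{EE24} in at $p$, at $p=2$, and at $p=\infty$ and compare. At $p=2$ the factor $M(f)^{1/2-1/p}$ disappears and we get $\|f\|_{2,1/N}\sim N^{(n+1)/4}|\H(f)|^{1/2}$; at $p=\infty$ the factor $|\H(f)|^{1/p}$ and the $N^{(n+1)/(2p)}$ factor disappear and we get $\|f\|_{\infty,1/N}\sim M(f)^{1/2}$. Consequently
\[
\|f\|_{2,1/N}^{2/p}\,\|f\|_{\infty,1/N}^{1-2/p}\sim N^{\frac{n+1}{2p}}|\H(f)|^{1/p}\,M(f)^{\frac{1}{2}-\frac{1}{p}},
\]
which matches the right-hand side of \eqref{EE24}, proving \eqref{EE10}.

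There is no serious obstacle; the only points requiring care are verifying that the pigeonholed pieces $f^{(k)}$ really satisfy the $N$-function axioms (separation of tubes and the normalizations $|f_T|\le\phi_T$, $\|f_T\|_p\sim|T|^{1/p}$ are preserved by restricting to a subcollection of tubes), and confirming the endpoint identities $\|f\|_{2,1/N}\sim N^{(n+1)/4}|\H(f)|^{1/2}$ and $\|f\|_{\infty,1/N}\sim M(f)^{1/2}$ from \eqref{EE2}, which is immediate once one notes that for a balanced $f$ one has $|\H(f,\theta)|\sim |\H(f)|/M(f)$ on the support $\Theta=\{\theta:\H(f,\theta)\ne\emptyset\}$.
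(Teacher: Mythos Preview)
Your proof is correct and takes essentially the same approach as the paper: for (i) the paper simply says the result is ``immediate by using dyadic ranges,'' which is precisely your dyadic pigeonholing on $|\H(f,\theta)|$, and for (ii) the paper just says it ``will follow from \eqref{EE24},'' which is exactly the endpoint substitution you carry out. You have accurately supplied the details the paper omits.
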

\begin{proof}
Note that $(i)$ is immediate by using dyadic ranges. Also, $(ii)$ will follow from \eqref{EE24}.
\end{proof}

In the remaining sections we will  use the fact that the contribution of $f$ to various inequalities comes from   logarithmically many $N$-functions. The basic mechanism is the following.

\begin{lemma}[Wave packet decomposition]
\label{WPD}
Assume $f$ is Fourier supported in $\A_\delta$. Then for each dyadic $0<\lambda\lesssim \|f\|_{\infty,\delta}$ there is an $N=\delta^{-1}$-function $f_\lambda$ such that
$$f=\sum_{\lambda\lesssim \|f\|_{\infty,\delta}}\lambda f_\lambda$$
and for each $2\le p<\infty$ we have
\begin{equation}
\label{EE8}
\lambda^pN^{\frac{n+1}{2}}|\H(f_\lambda)|\le \|\lambda f_\lambda\|_{p,\delta}^p\lesssim \|f\|_{p,\delta}^p.
\end{equation}
\end{lemma}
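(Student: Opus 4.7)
The plan is to carry out a standard wave-packet decomposition of each frequency piece $f_\theta$ and then aggregate the resulting packets by the dyadic size of their coefficients.

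First I would decompose $f=\sum_{\theta\in\P_\delta}f_\theta$ into Fourier pieces. For each $\theta$, since $\theta$ sits in a rectangular box of dimensions $\sim \delta^{1/2}\times\cdots\times\delta^{1/2}\times\delta$, Fourier duality provides a tiling of $\R^n$ by essentially disjoint $N$-tubes $T$ ($N=\delta^{-1}$) with orientation dual to $\theta$. Fixing a smooth partition of unity subordinate to this tiling and truncating its Fourier side to $\theta$, I would obtain a decomposition
$$f_\theta=\sum_{T\parallel\theta} a_T\psi_T,$$
where each $\psi_T$ is an $L^\infty$-normalized bump with $|\psi_T|\lesssim\phi_T$ and $\supp(\widehat{\psi_T})\subset\theta$, and the coefficients satisfy $|a_T|\lesssim\|f_\theta\|_\infty\le\|f\|_{\infty,\delta}$.

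Next I would bucket the tubes by magnitude: for each dyadic $0<\lambda\lesssim\|f\|_{\infty,\delta}$, set
$$\H_\lambda=\{T:|a_T|\sim\lambda\},\qquad \lambda f_\lambda:=\sum_\theta\sum_{T\in\H_\lambda,\,T\parallel\theta}a_T\psi_T.$$
Then $f=\sum_\lambda \lambda f_\lambda$ and the renormalized $f_\lambda=\sum_{T\in\H_\lambda}(a_T/\lambda)\psi_T$ fits the definition of an $N$-function with tube collection $\H(f_\lambda)=\H_\lambda$: the pointwise bound $|(a_T/\lambda)\psi_T|\lesssim\phi_T$ holds because $|a_T/\lambda|\sim 1$, the $L^p$ normalization $\|(a_T/\lambda)\psi_T\|_p\sim|T|^{1/p}$ follows from the bump shape, the tube collection is separated because the tiles themselves have bounded overlap per orientation, and $\supp(\widehat{(a_T/\lambda)\psi_T})\subset\theta(T)$ by construction.

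The lower bound in \eqref{EE8} is immediate from \eqref{EE2} applied to $\lambda f_\lambda$ (noting $|T|=N^{(n+1)/2}$) combined with the elementary inequality
$$\sum_\theta |\H(f_\lambda,\theta)|^{2/p}\ \ge\ \Bigl(\sum_\theta |\H(f_\lambda,\theta)|\Bigr)^{2/p}=|\H(f_\lambda)|^{2/p},$$
valid because $2/p\le 1$. For the upper bound, I fix $\theta$ and use the essentially disjoint spatial supports of the $\psi_T$ for tubes $T\parallel\theta$ to estimate
$$\|(\lambda f_\lambda)_\theta\|_p^p\ \sim\ \sum_{T\in\H_\lambda,\,T\parallel\theta}|a_T|^p|T|\ \le\ \sum_{T\parallel\theta}|a_T|^p|T|\ \sim\ \|f_\theta\|_p^p.$$
Squaring in $\theta$ and summing gives $\|\lambda f_\lambda\|_{p,\delta}\lesssim\|f\|_{p,\delta}$, which raised to the $p$-th power is the desired upper bound.

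The main technical step will be arranging the partition of unity so that the bumps $\psi_T$ simultaneously have Fourier support in $\theta$, satisfy the pointwise Schwartz-type decay $|\psi_T|\lesssim\phi_T$, and produce coefficients $a_T$ for which the near-equality $\|f_\theta\|_p^p\sim\sum_T|a_T|^p|T|$ holds. Both are standard consequences of an almost-orthogonal physical-space decomposition adapted to $\theta$, but require care in the truncation on the Fourier side to keep the support strictly inside $\theta$ while preserving the tube localization; choosing $M$ in the definition of $\phi$ sufficiently large absorbs the resulting tails.
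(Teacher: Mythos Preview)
Your proposal is correct and follows essentially the same approach as the paper: a frame-type expansion of each $f_\theta$ into tube-adapted wave packets, followed by dyadic bucketing of the coefficients, with the lower bound coming from \eqref{EE2} plus $\|\cdot\|_{\ell^{p/2}}\le\|\cdot\|_{\ell^1}$ and the upper bound from a per-$\theta$ estimate $\sum_T|a_T|^p|T|\lesssim\|f_\theta\|_p^p$. The paper is slightly more explicit in two places you flagged as needing care: it first convolves $f_\theta$ with an $L^1$-bounded kernel to shrink the Fourier support to $\tfrac{9}{10}\theta$ (making room for the packets' Fourier support), and for the upper bound it obtains $|a_T|^p\lesssim\int|\tilde f_\theta|^p|T|^{-1/2}\varphi_T$ via H\"older before summing, rather than appealing directly to near-disjointness.
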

\begin{proof}
Using a partition of unity write
$$f=\sum_{\theta\in \P_\delta}\tilde{f}_\theta$$
with $\tilde{f}_\theta=f_\theta*K_\theta$ Fourier supported in $\frac9{10}\theta$ with $\|K_\theta\|_1\lesssim 1$.
Consider a windowed Fourier series expansion for each $\tilde{f}_\theta$
$$\tilde{f}_\theta=\sum_{T\in \H_\theta}\langle \tilde{f}_\theta,\varphi_T\rangle\varphi_T,$$
where $\varphi_T$ are $L^2$ normalized Schwartz functions Fourier localized in $\theta$ such that $$|T|^{1/2}|\varphi_T|\lesssim \phi_T.$$
The tubes in $\H_\theta$ are separated.
Note that by H\"older's inequality
$$|a_T:=\frac{1}{|T|^{1/2}}\langle \tilde{f}_\theta,\varphi_T\rangle|\lesssim \|\tilde{f}_\theta\|_\infty\lesssim \|f_\theta\|_\infty\le \|f\|_{\infty,\delta}.$$
It is now clear that we should take
$$f_\lambda=\sum_{\theta}\sum_{T\in\H_\theta:\;|a_T|\sim\lambda}a_T\lambda^{-1}|T|^{1/2}\varphi_T.$$

To see \eqref{EE8} note that the first inequality follows from \eqref{EE2} and the fact that $\|\cdot\|_{l^{p/2}}\le \|\cdot\|_{l^{1}}$. To derive the second inequality, it suffices to prove that for each $\theta$
$$\|\sum_{T\in\H_\theta:\;|a_T|\sim\lambda}\langle \tilde{f}_\theta,\varphi_T\rangle\varphi_T\|_p\lesssim\|f_\theta\|_p.$$
Using \eqref{EE2} and the immediate consequence of H\"older's inequality $|a_T|^p\lesssim\int|\tilde{f}_\theta|^p|T|^{-1/2}\varphi_T$ we get
$$\|\sum_{T\in\H_\theta:\;|a_T|\sim\lambda}\langle \tilde{f}_\theta,\varphi_T\rangle\varphi_T\|_p^p\lesssim \lambda^p|T||\{T\in\H_\theta:\;|a_T|\sim\lambda\}|\lesssim$$
$$\lesssim |T|\sum_{T\in\H_\theta}|a_T|^p\lesssim \int|\tilde{f}_\theta|^p\sum_{T\in\H_\theta}\phi_T\lesssim \int|\tilde{f}_\theta|^p\lesssim \int|{f}_\theta|^p.$$
\end{proof}
\bigskip

\section{Parabolic rescaling}\label{s5}
\bigskip

\begin{proposition}
\label{propo:parabooorescal}
Let $\delta\le \sigma<\frac12$ and  $K_p(\frac\delta\sigma)$ be such that
$$\|f\|_p\le K_p(\frac\delta\sigma) (\sum_{\theta\in \P_{\frac\delta\sigma}}\|f_\theta\|_p^2)^{1/2},$$
for each $f$ with Fourier support in $\A_{\frac\delta\sigma}$.
Then for each $f$ with Fourier support in $\A_\delta$ and for each $\tau\in \P_\sigma$ we have
$$\|f_\tau\|_p\lesssim K_p(\frac\delta\sigma)(\sum_{\theta\in \P_\delta:\:\theta\cap \tau\not=\emptyset}\|f_\theta\|_p^2)^{1/2}.$$
\end{proposition}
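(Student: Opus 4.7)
The plan is to exploit the affine symmetry of the paraboloid: a $\sigma^{1/2}$-cap of $\mathcal{N}_\delta$ can be mapped, by a single affine transformation, onto the full $\mathcal{N}_{\delta/\sigma}$, and this transformation carries the $\delta^{1/2}$-caps $\theta \subset \tau$ onto the $(\delta/\sigma)^{1/2}$-caps of $\mathcal{P}_{\delta/\sigma}$. The hypothesis then supplies the desired decoupling on the rescaled side, and we pull back.

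Concretely, if $\tau$ has center $(c_1,\ldots,c_{n-1},\sum c_i^2)\in P^{n-1}$, I would consider the affine map
\[
L(\xi_1,\ldots,\xi_{n-1},\xi_n)=\bigl(\sigma^{-1/2}(\xi_1-c_1),\ldots,\sigma^{-1/2}(\xi_{n-1}-c_{n-1}),\ \sigma^{-1}(\xi_n-2\sum_i c_i\xi_i+\sum_i c_i^2)\bigr).
\]
A direct computation using \eqref{EE14} shows that $L(\tau)\subset \mathcal{N}_{\delta/\sigma}$ and, more precisely, that $L$ maps each $\theta\in\mathcal{P}_\delta$ meeting $\tau$ to (essentially) a single element $\tilde\theta\in\mathcal{P}_{\delta/\sigma}$ in a bijective manner. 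I would then define
\[
\tilde f(\tilde x):=f_\tau(L^T\tilde x),
\]
so that $\widehat{\tilde f}$ is (up to a Jacobian constant) the push-forward of $\widehat{f_\tau}$ under $L$; in particular $\supp(\widehat{\tilde f})\subset L(\tau)\subset \mathcal{N}_{\delta/\sigma}$ and $\widehat{\tilde f_{\tilde\theta}}$ is the push-forward of $\widehat{f_\theta}$.

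Now I apply the hypothesis at scale $\delta/\sigma$ to $\tilde f$, obtaining
\[
\|\tilde f\|_p \le K_p(\delta/\sigma)\Bigl(\sum_{\tilde\theta\in\mathcal{P}_{\delta/\sigma}}\|\tilde f_{\tilde\theta}\|_p^2\Bigr)^{1/2}.
\]
A linear change of variables in the integral gives $\|\tilde f\|_p=|\det L|^{-1/p}\|f_\tau\|_p$ and, identically, $\|\tilde f_{\tilde\theta}\|_p=|\det L|^{-1/p}\|f_\theta\|_p$, with $|\det L|=\sigma^{-(n+1)/2}$. The Jacobian factors cancel between the two sides, and the sum over $\tilde\theta$ becomes a sum over $\theta\in\mathcal{P}_\delta$ with $\theta\cap\tau\neq\emptyset$, producing exactly the claimed inequality.

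There is no serious obstacle; the only points to check carefully are the computation that $L$ sends $\tau$ into $\mathcal{N}_{\delta/\sigma}$ (this is where the paraboloid structure is used, and up to affine changes of coordinates this is why the statement extends from $P^{n-1}$ to any compact $C^2$ hypersurface with positive definite second fundamental form by localizing to a single cap of $S$ and approximating $S$ by its osculating paraboloid) and the check that the image $L(\theta)$ of a $\delta^{1/2}$-cap $\theta\subset\tau$ is (modulo bounded overlaps) a $(\delta/\sigma)^{1/2}$-cap. Both are routine from the explicit form \eqref{EE14}.
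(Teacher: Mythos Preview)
Your proof is correct and follows essentially the same route as the paper: the affine map $L$ you write down is exactly the paper's $L_\tau$, and the argument (push forward by $L$, apply the hypothesis at scale $\delta/\sigma$, observe that the Jacobian factor $|\det L|^{-1/p}$ cancels on both sides) is identical. The paper omits your closing remark about general hypersurfaces, but that is an aside and does not affect the proof.
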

\begin{proof}
Let $a=(a_1,\ldots,a_{n-1})$ be the center of the  $\sigma^{1/2}$-cube $C_\tau$, see \eqref{EE14}. We will perform the parabolic rescaling via the affine transformation
$$L_\tau(\xi_1,\ldots,\xi_n)=(\xi_1',\ldots,\xi_n')=(\frac{\xi_1-a_1}{\sigma^{1/2}},\ldots,\frac{\xi_{n-1}-a_{n-1}}{\sigma^{1/2}}, \frac{\xi_n-2\sum_{i=1}^{n-1}a_i\xi_i+\sum_{i=1}^{n-1}a_i^2}{\sigma}).$$
Note that
$$\xi_n'-\sum_{i=1}^{n-1}{\xi_i'}^2=\sigma^{-1}(\xi_n-\sum_{i=1}^{n-1}\xi_i^2).$$
It follows that $L_\tau$ maps the Fourier support $\A_\delta\cap \tau$ of $f_\tau$ to $\A_{\frac\delta\sigma}\cap ([-\frac12,\frac12]^{n-1}\times\R)$. Also, for each $\tau'\in \P_{\frac\delta\sigma}$ we have that $L_\tau(\theta)=\tau'$ for some $\theta\in \P_\delta$ with $\theta\cap \tau\not =\emptyset$. Thus
$$\|f_\tau\|_p^p=\|g\|_p^p(\operatorname{det}(L_\tau))^{1-p},$$
where $g$ is the $L_\tau$ dilation of $f_\tau$ Fourier supported in $\A_{\frac\delta\sigma}\cap ([-\frac12,\frac12]^{n-1}\times\R)$.
By invoking the hypothesis we get that
$$\|g\|_p\lesssim K_p(\frac\delta\sigma)(\sum_{\tau'\in \P_{\frac\delta\sigma}}\|g_{\tau'}\|_p^2)^{1/2}.$$
We are done if we use  the fact that
$$\|f_{\theta}\|_p^p=\|g_{\tau'}\|_p^p(\operatorname{det}(L_\tau))^{1-p}$$
whenever $L_\tau(\theta)=\tau'$.
\end{proof}

\section{Linear versus multilinear decoupling}

\label{s10}

The material in this section is an application of the Bourgain-Guth induction on scales \cite{BG} and it is most closely connected to the argument in \cite{Bo2}.
Let $g:P^{n-1}\to\C$. For a cap $\tau$ on $P^{n-1}$ we let $g_\tau=g1_\tau$ be the (spatial) restriction of $g$ to $\tau$.
We denote by $\pi:P^{n-1}\to [-1/2,1/2]^{n-1}$ the projection map.
\begin{definition}
We say that the caps $\tau_1,\ldots,\tau_n$ on $P^{n-1}$ are $\nu$-transverse if the volume of the parallelepiped spanned by any unit normals $v_i$ at $\tau_i$ is greater than $\nu$.
\end{definition}

We denote by $C_{p,n}(\delta,\nu)$ the smallest constant such that
$$\|(\prod_{i=1}^n|\widehat{g_{\tau_i}d\sigma}|)^{1/n}\|_{L^p(B_{\delta^{-1}})}\le C_{p,n}(\delta,\nu)\left[\prod_{i=1}^n(\sum_{\theta:\;\delta^{1/2}-\text{cap}\atop{\theta\subset\tau_i}}\|\widehat{g_{\theta}d\sigma}\|_{L^p(w_{B_{\delta^{-1}}})}^2)^{1/2}\right]^{1/n},$$
for each $\nu$-transverse caps $\tau_i\subset P^{n-1}$, each $\delta^{-1}$ ball $B_{\delta^{-1}}$ and each $g:P^{n-1}\to\C$.

Let also $K_{p,n}(\delta)$ be the smallest constant such that
$$\|\widehat{gd\sigma}\|_{L^p(B_{\delta^{-1}})}\le K_{p,n}(\delta)(\sum_{\theta:\delta^{1/2}-\text{cap}}\|\widehat{g_{\theta}d\sigma}\|_{L^p(w_{B_{\delta^{-1}}})}^2)^{1/2},$$
for each $g:P^{n-1}\to\C$ and each $\delta^{-1}$ ball $B_{\delta^{-1}}$.
\begin{remark}
\label{gfhgdhfghdgfhdgfhdgfhertey8we837ofhgh;kjojhiop[uolip[mlo0eru89u90e98j7897989}
As before,  the norm $\|f\|_{L^p(w_{B_R})}$ refers to the weighted $L^p$ integral $$(\int_{\R^n}|f(x)|^pw_{B_R}(x)dx)^{1/p}$$
for some weight satisfying \eqref{wnret78u-0943mt7-w,-,1ir8gnrnfyqerbtf879wumweryermf,}.
It is important to realize that there are such weights which in addition are Fourier supported in $B(0,R^{-1})$. Note also that if $g$ is supported on $P^{n-1}$ and if $\widehat{w_{B_R}}$ is supported in $B(0,R^{-1})$, then $(\widehat{gd\sigma})w_{B_R}$ has Fourier support  inside $\A_{R^{-1}}$. This simple observation justifies the various (entirely routine) localization arguments that follow, as well as the interplay between Fourier transforms of functions and Fourier transforms of measures supported on $P^{n-1}$. In particular let $K_{p,n}^{(1)}(\delta)$ be the smallest constant such that
$$\|\widehat{gd\sigma}\|_{L^p(w_{B_{\delta^{-1}}})}\le K_{p,n}^{(1)}(\delta)(\sum_{\theta:\delta^{1/2}-\text{cap}}\|\widehat{g_{\theta}d\sigma}\|_{L^p(w_{B_{\delta^{-1}})}}^2)^{1/2},$$
for each $g:P^{n-1}\to\C$ and each $\delta^{-1}$ ball $B_{\delta^{-1}}$.
Then $ K_{p,n}^{(1)}(\delta)\sim_{n,p} K_{p,n}(\delta)$.
Also, if $ K_{p,n}^{(2)}(\delta)$, $ K_{p,n}^{(3)}(\delta)$, $ K_{p,n}^{(4)}(\delta)$ are the smallest constants such that
$$
\|f\|_{L^p(\R^n)}\le K_{p,n}^{(2)}(\delta)(\sum_{\theta\in \P_\delta}\|f_\theta\|_{L^p(\R^n)}^2)^{1/2},
$$
$$
\|f\|_{L^p({B_{\delta^{-1}}})}\le K_{p,n}^{(3)}(\delta)(\sum_{\theta\in \P_\delta}\|f_\theta\|_{L^p(\R^n)}^2)^{1/2},
$$
$$
\|f\|_{L^p(w_{B_{\delta^{-1}}})}\le K_{p,n}^{(4)}(\delta)(\sum_{\theta\in \P_\delta}\|f_\theta\|_{L^p(w_{B_{\delta^{-1}}})}^2)^{1/2},
$$
for each $f$ Fourier supported in $\A_{\delta}$ and each $\delta^{-1}$ ball $B_{\delta^{-1}}$, then $$ K_{p,n}^{(2)}(\delta),K_{p,n}^{(3)}(\delta),K_{p,n}^{(4)}(\delta)\sim_{n,p} K_{p,n}(\delta).$$

The same observation applies  to the family of constants related to $C_{p,n}(\delta,\nu)$ from  the multilinear inequality.
\end{remark}
\bigskip

Note that due to H\"older's inequality
$$C_{p,n}(\delta,\nu)\le K_{p,n}(\delta).$$
We will show that the reverse inequality essentially holds true.
\begin{theorem}
\label{ch ft7wbtfgb6n17r782brym9,iqmivpk[l}
 Assume one of the following holds

(i) $n=2$

(ii) $n\ge 3$ and $K_{p,d}(\delta')\lesssim_\epsilon \delta^{'-\epsilon}$ for each $\delta'$, $\epsilon>0$ and each $2\le d\le n-1$.

Then for each $0<\nu\le 1$ there is $\epsilon(\nu)$ with $\lim_{\nu\to 0}\epsilon (\nu)=0$ and $C_\nu$ such that
$$K_{p,n}(\delta)\le C_\nu \delta^{-\epsilon(\nu)}C_{p,n}(\delta,\nu)$$
for each $\delta$.
\end{theorem}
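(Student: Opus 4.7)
The plan is to execute the Bourgain--Guth induction-on-scales scheme. Fix $\nu>0$, choose a large auxiliary parameter $K=K(\nu)$, partition $P^{n-1}$ into $K^{-1}$-caps $\tau$, and write $f=\sum_\tau f_\tau$. Cover the ball $B_{\delta^{-1}}$ by balls $B$ of radius $\sim K^{O(1)}$ on which each $|f_\tau|$ is essentially constant by the uncertainty principle. At each ball $B$, let $M(B)$ be the collection of $\tau$ for which $|f_\tau|$ is within a factor $K^{-100n}$ of $\max_{\tau'}|f_{\tau'}|$. Run the pointwise dichotomy: either there exist $\tau_1,\dots,\tau_n\in M(B)$ that are $\nu$-transverse (in which case $|f|\lesssim K^C\prod_{i=1}^n|f_{\tau_i}|^{1/n}$ on $B$), or, by a volume/compactness argument, the $\pi$-projections of the caps in $M(B)$ lie in an $O(\nu^{1/(n-1)})$-neighborhood of some affine hyperplane of $\R^{n-1}$, so that $|f|\lesssim\sum_{\tau\in M(B)}|f_\tau|$ with $M(B)$ confined to a lower-dimensional slab of $P^{n-1}$.

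The transverse contributions are handled by summing over the $O(K^{O(1)})$ possible transverse $n$-tuples, invoking the definition of $C_{p,n}(\delta,\nu)$ for each tuple, and using AM--GM to convert $\prod_{i=1}^n(\sum_{\theta\subset\tau_i}\|f_\theta\|_p^2)^{1/(2n)}$ into $(\sum_\theta\|f_\theta\|_p^2)^{1/2}$; this produces a bound of the form $\lesssim K^{O(1)}C_{p,n}(\delta,\nu)(\sum_\theta\|f_\theta\|_p^2)^{1/2}$. The non-transverse contributions are handled as follows: the slab containing $M(B)$ is the $O(\nu^{1/(n-1)})$-neighborhood of a paraboloid of dimension $n-2$ sitting inside a hyperplane, so by hypothesis (ii) (vacuously satisfied when $n=2$) one has $(n-1)$-dimensional $\ell^2$ decoupling at scale comparable to $K^{-2}$ with loss $\lesssim_\epsilon K^\epsilon\lesssim_\nu 1$; this decouples the slab into its constituent $K^{-1}$-caps $\tau$. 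Parabolic rescaling (Proposition \ref{propo:parabooorescal}) is then applied to each $\|f_\tau\|_p$ to convert the decoupling inside $\tau$ at scale $\delta$ into a linear decoupling at the coarser scale $\delta K^2$.

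Combining the two cases delivers a recursive inequality of the form
\[
K_{p,n}(\delta)\le C_\nu\,C_{p,n}(\delta,\nu)+C_\nu\,K_{p,n}(\delta K^2),
\]
which, iterated $m\sim\log(1/\delta)/\log K^2$ times (using the trivial $K_{p,n}(\delta')=O(1)$ for $\delta'\sim 1$ and the essentially non-decreasing behavior of $C_{p,n}(\cdot,\nu)$ as the scale shrinks), yields $K_{p,n}(\delta)\lesssim C_\nu^{\,m}C_{p,n}(\delta,\nu)=\delta^{-\log C_\nu/\log K^2}C_{p,n}(\delta,\nu)$, which is the desired bound with $\epsilon(\nu)=\log C_\nu/\log K^2$. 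Choosing $K(\nu)$ to grow much faster than $C_\nu$ as $\nu\to 0$ forces $\epsilon(\nu)\to 0$. The main obstacle is the non-transverse case: one must carefully identify the lower-dimensional geometry so that hypothesis (ii) can be invoked with only $\nu$-dependent (not $\delta$-dependent) losses, and then close the $\delta$-induction so that the accumulated loss really does scale as a small power of $\delta$ that vanishes with $\nu$.
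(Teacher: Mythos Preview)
Your approach follows the same Bourgain--Guth induction-on-scales scheme as the paper and is correct in outline. There is, however, one point of scale bookkeeping that needs adjustment. You run the dichotomy on balls $B$ of radius $\sim K$ (this is forced: you need each $|f_\tau|$ essentially constant on $B$, and the $\tau$ are $K^{-1}$-caps). But then the lower-dimensional decoupling you invoke in the non-transverse case is applied on a ball of radius $K$, hence at scale $K^{-1}$, so its output caps have size $K^{-1/2}$, not $K^{-1}$ as you claim. In other words, Fubini plus $K_{p,n-1}(K^{-1})$ decouples the slab into $K^{-1/2}$-pieces, not into the individual $K^{-1}$-caps $\tau$. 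This is precisely why the paper's key proposition carries \emph{two} output scales, the $K^{-1}$-caps $\alpha$ and the $K^{-1/2}$-caps $\beta$, and why the iteration advances the cap scale only by a factor $K^{1/2}$ per step. With this correction your recursion reads $K_{p,n}(\delta)\le C_\nu\,C_{p,n}(\delta,\nu)+C_\nu\,K_{p,n}(\delta K)$ rather than $\delta K^2$; the iteration still closes identically and yields $\epsilon(\nu)\to 0$.

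Two smaller remarks. First, the paper fixes $K=\nu^{-1}$ at the outset, which makes the match between the transversality threshold and the strip width $K^{-1/2}$ transparent; your ``choose $K(\nu)$ to grow much faster than $C_\nu$'' is compatible with this but vaguer. Second, your dichotomy (either a $\nu$-transverse $n$-tuple exists in $M(B)$, or all of $M(B)$ lies in an $O(\nu^{1/(n-1)})$-slab about a hyperplane) is correct---the greedy selection of extremal points shows $d_{n-1}^{\,n-1}\le d_1\cdots d_{n-1}\lesssim\nu$---and uses only the case $d=n-1$ of hypothesis (ii). The paper's multi-stage greedy argument instead distinguishes caps clustering near affine subspaces of every intermediate dimension and invokes $K_{p,d}$ for each $2\le d\le n-1$; your single-hyperplane reduction is a legitimate streamlining.
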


We prove the case $n=3$ and will indicate the modifications needed for $n\ge 4$. The argument will also show how to deal with the case $n=2$.

\begin{remark}
\label{markre1dd5}
If $Q_1,Q_2,Q_3\in[-1/2,1/2]^2$ the volume of parallelepiped spanned by the  unit normals to $P^2$ at $\pi^{-1}(Q_i)$ is comparable to the area of the triangle $\Delta Q_1Q_2Q_3$.
\end{remark}

The key step in the proof of Theorem \ref{ch ft7wbtfgb6n17r782brym9,iqmivpk[l} for $n=3$ is the following.

\begin{proposition}
\label{hcnyf7yt75ycn8u32r8907n580-9=--qc mvntvu5n8t}
 Assume $K_{p,2}(\delta)\lesssim_\epsilon \delta^{-\epsilon}$ for  each $\epsilon>0$. Then for each $\epsilon$ there is $C_\epsilon$ such that for each $R>1$ and $K\ge 1$
$$\|\widehat{gd\sigma}\|_{L^p(w_{B_R})}\le C_\epsilon K^{\epsilon}[(\sum_{\alpha\subset P^2\atop{\alpha:\frac{1}{K}-\text{ cap}}}\|\widehat{g_{\alpha}d\sigma}\|_{L^p(w_{B_R)}}^2)^{1/2}+(\sum_{\beta\subset P^2\atop{\beta:\frac{1}{K^{1/2}}-\text{ cap}}}\|\widehat{g_{\beta}d\sigma}\|_{L^p(w_{B_R})}^2)^{1/2}]+$$$$+K^{10}C_{p,3}(R^{-1},K^{-1})(\sum_{\Delta\subset P^2\atop{\Delta:\frac1{R^{1/2}}-\text{ cap}}}\|\widehat{g_{\Delta}d\sigma}\|_{L^p(w_{B_R})}^2)^{1/2}$$
\end{proposition}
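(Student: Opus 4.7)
The plan is to apply the Bourgain--Guth induction on scales from \cite{BG} and \cite{Bo2} at scale $K$. Cover $B_R$ by a finitely overlapping family of $K$-balls $B_K$. For each $\frac{1}{K}$-cap $\alpha$, the function $\widehat{g_\alpha d\sigma}$ has Fourier support in a set of diameter $K^{-1}$, so $|\widehat{g_\alpha d\sigma}|$ is essentially constant on $B_K$ up to Schwartz tails; denote by $c_\alpha(B_K)$ a convenient localized average of $|\widehat{g_\alpha d\sigma}|$ on $B_K$. Call $\alpha$ significant at $B_K$ if $c_\alpha(B_K)\ge K^{-100}\max_{\alpha'}c_{\alpha'}(B_K)$, so that the combined contribution of the $\lesssim K^2$ non-significant caps is already negligible against the right hand side of the target inequality.

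On each $B_K$ I would carry out the following geometric dichotomy for the projections to $[-1/2,1/2]^2$ of the significant caps. Either $(T)$ there exist three significant caps $\alpha_1,\alpha_2,\alpha_3$ whose projected centers span a triangle of area $\ge K^{-1}$, so that by Remark \ref{markre1dd5} the three caps are $K^{-1}$-transverse; or $(S)$ all significant caps are contained in a slab $s(B_K)\subset[-1/2,1/2]^2$ of width $\lesssim K^{-1/2}$. This dichotomy is elementary plane geometry: if $(T)$ fails then the width times the diameter of the projected significant set is $\lesssim K^{-1}$, which forces width $\lesssim K^{-1/2}$ unless the whole set already fits inside a single $\frac{1}{K^{1/2}}$-cap.

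In case $(S)$, $|\widehat{gd\sigma}|\lesssim|\widehat{g_{s(B_K)}d\sigma}|$ on $B_K$ modulo the negligible errors from non-significant caps. The slab $s(B_K)$, lifted to $P^2$, deviates from a cylinder over a one-dimensional parabola only by shifts of size $\lesssim K^{-1}$ in the normal coordinate, which is well within the resolution scale $R^{-1}$. One may therefore apply the assumed bound $K_{p,2}(K^{-1})\lesssim_\epsilon K^\epsilon$ via a cylindrical reduction (rotate so the slab is $\{|\xi_2|\le K^{-1/2}\}$, decouple in $\xi_1$ using the one-dimensional parabola constant while treating $\xi_2$ as a passive parameter) to decouple $g_{s(B_K)}$ into its $\frac{1}{K^{1/2}}$-cap pieces $\beta\subset s(B_K)$. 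Then Minkowski's inequality in the form $\ell^{p}_{B_K}(\ell^{2}_{\beta})\subset \ell^{2}_{\beta}(\ell^{p}_{B_K})$ for $p\ge 2$, together with enlarging the sum to all $\beta$ and using the compatibility of the weights $w_{B_K}$ with $w_{B_R}$, yields the second term. The degenerate subcase in which only one significant cap exists on $B_K$ is handled by the cruder estimate $|\widehat{gd\sigma}|\lesssim|\widehat{g_{\alpha^*}d\sigma}|\le(\sum_\alpha|\widehat{g_\alpha d\sigma}|^p)^{1/p}$ combined with the embedding $\ell^p\subset\ell^2$ for $p\ge 2$, producing the first term.

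In case $(T)$ the significance condition gives $|\widehat{gd\sigma}(x)|\lesssim K^{C}\bigl(\prod_{i=1}^{3}|\widehat{g_{\alpha_i}d\sigma}(x)|\bigr)^{1/3}$ for $x\in B_K$. Grouping the balls $B_K$ by the triple $(\alpha_1,\alpha_2,\alpha_3)$, summing, and invoking the defining inequality of $C_{p,3}(R^{-1},K^{-1})$ on each of the $O(K^6)$ transverse triples gives, after AM-GM over the three factors $\alpha_i$ to convert the product of partial sums of $\frac{1}{R^{1/2}}$-cap norms into a single global sum, the third term, with all polynomial losses absorbed into the stated $K^{10}$. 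I expect the main obstacle to be bookkeeping rather than any single conceptual step: one must make the wave-packet localization on $B_K$, the cylindrical reduction for $(S)$, and the exchange of $L^p$ integration with summation over $B_K$ all uniform in $B_K$, and track constants carefully so that the aggregated losses fit inside the stated $K^\epsilon$ and $K^{10}$ factors.
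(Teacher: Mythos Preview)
Your proposal is correct and follows essentially the same Bourgain--Guth dichotomy as the paper: localize to $K$-balls, then either find a $K^{-1}$-transverse triple (giving the $C_{p,3}$ term) or confine all significant caps to a $K^{-1/2}$-wide strip and apply the two-dimensional decoupling hypothesis via Fubini along the cylinder direction (giving the $\beta$-sum), with the non-significant caps absorbed into the $\alpha$-sum. The only organizational difference is that the paper makes the trichotomy explicit by first selecting $\alpha^*,\alpha^{**}$ to define the line $L$ and carries a separate $\max_\beta$ term for the case where all significant mass sits in a single $K^{-1/2}$-ball, whereas you fold that case into $(S)$; both packagings lead to the same three terms on each $B_K$ and the same Minkowski summation over $B_K\subset B_R$.
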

\begin{proof}
Following the standard formalism (see for example sections 2-4 in \cite{BG}) we will regard $|\widehat{g_{\alpha}d\sigma}|$ as being essentially constant on each ball $B_K$. Denote by $c_\alpha(B_K)$ this value and  let $\alpha^*$ be the cap that maximizes it.

The starting point in the argument is the observation  that for each $B_K$ there exists a line $L=L(B_K)$ in the $(\xi_1,\xi_2)$ plane such that if
$$S_L=\{(\xi_1,\xi_2): \dist((\xi_1,\xi_2),L)\le \frac{C}{K^{\frac12}}\}$$
then for $x\in B_K$
$$ |\widehat{gd\sigma}(x)|\le $$
\begin{equation}
\label{term1.1}C\max_{\alpha}|\widehat{g_{\alpha}d\sigma}(x)|+
\end{equation}
\begin{equation}
\label{term1.4addedonJuly15/2015}C\max_{\beta}|\widehat{g_{\beta}d\sigma}(x)|+
\end{equation}
\begin{equation}
\label{term1.2}
K^{4}\max_{\alpha_1,\alpha_2,\alpha_3\atop{K^{-1}-\text{transverse}}}(\prod_{i=1}^3|\widehat{g_{\alpha_i}d\sigma}(x)|)^{1/3}+
\end{equation}\begin{equation}
\label{term1.3}
|\sum_{\beta\subset \pi^{-1}(S_L)}\widehat{g_{\beta}d\sigma}(x)|.
\end{equation}
To see this, we distinguish three scenarios.

First, if $c_\alpha(B_K)\le K^{-2}c_{\alpha^*}(B_K)$ for each $\alpha$ with  $\dist(\pi(\alpha),\pi(\alpha^*))\ge \frac{10}{K^{\frac12}}$, then the sum of \eqref{term1.1} and \eqref{term1.4addedonJuly15/2015} suffices, as
$$ |\widehat{gd\sigma}(x)|\le \sum_{\alpha:\;\dist(\pi(\alpha),\pi(\alpha^*))\ge \frac{10}{K^{\frac12}}}|\widehat{g_\alpha d\sigma}(x)|+|\sum_{\alpha:\;\dist(\pi(\alpha),\pi(\alpha^*))< \frac{10}{K^{\frac12}}}\widehat{g_\alpha d\sigma}(x)|.$$
Otherwise, there is  $\alpha^{**}$ with $\dist(\pi(\alpha^{**}),\pi(\alpha^*))\ge \frac{10}{K^{\frac12}}$ and
$c_{\alpha^{**}}(B_K)\ge K^{-2}c_{\alpha^*}(B_K)$.
The line $L$ is determined by the centers of $\alpha^*,\alpha^{**}$.

Second, if there is $\alpha^{***}$ such that  $\pi(\alpha^{***})$ intersects the complement of $S_L$ and $c_{\alpha^{***}}(B_K)\ge K^{-2}c_{\alpha^*}(B_K)$ then  \eqref{term1.2} suffices. Indeed, note that $\alpha^{*},\alpha^{**}$, $\alpha^{***}$ are $K^{-1}$ transverse by Remark \ref{markre1dd5}.

The third case is when $c_{\alpha}(B_K)< K^{-2}c_{\alpha^*}(B_K)$ whenever  $\pi(\alpha)$ intersects the complement of $S_L$. It is immediate that the sum of \eqref{term1.1} and \eqref{term1.3} will suffice in this case.

The only nontrivial case to address is the one corresponding to this latter scenario. We note that the $K^{-1}$ neighborhood of each $\beta\subset \pi^{-1}(S_L)$ is essentially a $K^{-1}\times K^{-\frac12}\times K^{-\frac12}$ box whose side of length $K^{-1}$ points in the direction of the normal vector $(2x,2y,-1)$, for some $(x,y)\in L$. These normal vectors belong to a plane with normal vector $\v$. Thus the $K^{-1}$ neighborhood of $\pi^{-1}(S_L)$ sits inside the $CK^{-1}$ neighborhood of a  cylinder in the direction $\v$, of height $\sim K^{-\frac12}$, over the parabola $\pi^{-1}(L)$.

 An application of Fubini's inequality  shows that
$$\|\sum_{\beta:\pi(\beta)\subset S_L}\widehat{g_{\beta}d\sigma}\|_{L^p(B_K)}\lesssim K_{p,2}(K^{-1})(\sum_{\beta}\|\widehat{g_{\beta}d\sigma}\|_{L^p(w_{B_K})}^2)^{1/2}.$$
We are of course relying on the fact that $\pi^{-1}(L)$ is a parabola with principal curvature roughly 1, and that the angle between the plane of the parabola and $\v$ is away from zero.

We conclude that in either case
$$\|\widehat{gd\sigma}\|_{L^p(B_K)}\le C_\epsilon K^{\epsilon}[(\sum_{\alpha\subset P^2\atop{\alpha:\frac{1}{K}\text{ cap}}}\|\widehat{g_{\alpha}d\sigma}\|_{L^p(w_{B_K})}^2)^{1/2}+(\sum_{\beta\subset P^2\atop{\beta:\frac{1}{K^{1/2}}\text{ cap}}}\|\widehat{g_{\beta}d\sigma}\|_{L^p(w_{B_K})}^2)^{1/2}]+$$$$+K^{4}\max_{\alpha_1,\alpha_2,\alpha_3\atop{K^{-1}-\text{transverse}}}\|(\prod_{i=1}^3|\widehat{g_{\alpha_i}d\sigma}(x)|)^{1/3}\|_{L^p(w_{B_K})}$$
It suffices now to raise to the $p^{th}$ power and sum over $B_K\subset B_R$ using Minkowski's inequality. Also, the norm $\|\widehat{gd\sigma}\|_{L^{p}(B_R)}$ can be replaced by the weighted norm $\|\widehat{gd\sigma}\|_{L^{p}(w_{B_R})}$ via the localization argument described in Remark \ref{gfhgdhfghdgfhdgfhdgfhertey8we837ofhgh;kjojhiop[uolip[mlo0eru89u90e98j7897989}.
\end{proof}

Rescaling gives the following.

\begin{proposition}
\label{jcfn vrfwyt8981  =,i90t8-kc0r=90,it90}
Let $\tau$ be a $\delta$ cap. Assume $K_{p,2}(\delta')\lesssim_\epsilon \delta^{'-\epsilon}$ for  each $\epsilon>0$ and $\delta'$. Then for each $\epsilon$ there is $C_\epsilon$ such that for each $R>\delta^{-2}$ and $K\ge 1$
$$\|\widehat{g_{\tau}d\sigma}\|_{L^p(w_{B_R})}\le C_\epsilon K^{\epsilon}[(\sum_{\alpha\subset \tau\atop{\alpha:\frac{\delta}{K}\text{ cap}}}\|\widehat{g_{\alpha}d\sigma}\|_{L^p(w_{B_R})}^2)^{1/2}+(\sum_{\beta\subset \tau\atop{\beta:\frac{\delta}{K^{1/2}}\text{ cap}}}\|\widehat{g_{\beta}d\sigma}\|_{L^p(w_{B_R})}^2)^{1/2}]+$$$$K^{10}C_{p,3}((R\delta^2)^{-1},K^{-1})(\sum_{\Delta\subset \tau\atop{\Delta:\frac1{R^{1/2}}\text{ cap}}}\|\widehat{g_{\Delta}d\sigma}\|_{L^p(w_{B_R})}^2)^{1/2}$$
\end{proposition}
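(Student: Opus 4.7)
The proposition is the parabolic-rescaling counterpart of Proposition \ref{hcnyf7yt75ycn8u32r8907n580-9=--qc mvntvu5n8t}: the three scales $\delta/K$, $\delta/K^{1/2}$ and $R^{-1/2}$ are just the scales $1/K$, $1/K^{1/2}$ and $(R\delta^2)^{-1/2}$ multiplied through by $\delta$, and the multilinear constant $C_{p,3}((R\delta^2)^{-1},K^{-1})$ is exactly what Proposition \ref{hcnyf7yt75ycn8u32r8907n580-9=--qc mvntvu5n8t} would produce at scale $R\delta^2$. The plan, therefore, is to pull $g_\tau$ back to a function $G$ living essentially on the full paraboloid and then apply Proposition \ref{hcnyf7yt75ycn8u32r8907n580-9=--qc mvntvu5n8t} to $G$ at the right scale.

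Concretely, let $L_\tau$ be the affine Fourier-space map used in the proof of Proposition \ref{propo:parabooorescal}, chosen so that $L_\tau$ stretches the two tangential directions of $\tau$ by $\delta^{-1}$ and the normal direction by $\delta^{-2}$. Define $G:P^2\to\C$ by transporting $g_\tau$ through $L_\tau$, so that $\widehat{G\,d\sigma}$ is an affine rescaling of $\widehat{g_\tau\,d\sigma}$ supported essentially on all of $P^2$. Under the dual of $L_\tau$, the physical-space ball $B_R$ is sent to an anisotropic box $\Pi$ of dimensions $\sim R\delta\times R\delta\times R\delta^2$. The hypothesis $R>\delta^{-2}$ guarantees $R\delta^2>1$, so Proposition \ref{hcnyf7yt75ycn8u32r8907n580-9=--qc mvntvu5n8t} is available at the scale $R':=R\delta^2$.

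One then covers $\Pi$ by a finitely overlapping collection of balls $B_{R\delta^2}(x_j)$ and applies Proposition \ref{hcnyf7yt75ycn8u32r8907n580-9=--qc mvntvu5n8t} on each, obtaining a weighted decoupling of $G$ on $B_{R\delta^2}(x_j)$ in terms of $1/K$-, $K^{-1/2}$- and $(R\delta^2)^{-1/2}$-caps on $P^2$. Since $p\ge 2$, Minkowski's inequality in $\ell^{p/2}$ exchanges the sum over $j$ with the $\ell^2$ sum over caps, and the superposition of weights $w_{B_{R\delta^2}(x_j)}$ is dominated by a single weight for $\Pi$ with the usual polynomial decay. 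Pushing this back through $L_\tau^{-1}$ into the original physical space, caps of tangential side $s\subset\tau$ transport to caps of side $s/\delta$ on $P^2$, so the three cap families $\alpha,\beta,\Delta$ in the conclusion come out at exactly the required scales; the multilinear constant $C_{p,3}(\cdot,K^{-1})$ is invariant under such affine rescalings because both sides of its defining inequality transform by the same Jacobian factor, and thus produces $C_{p,3}((R\delta^2)^{-1},K^{-1})$ as claimed.

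The main technical nuisance I expect is the anisotropy of $\Pi$: it is longer than $B_{R\delta^2}$ in two directions, so one has to be careful that the weighted norms $L^p(w_{B_R})$ in the original physical space pass cleanly to a manageable weight on $\Pi$ after rescaling, and conversely that summing the weights $w_{B_{R\delta^2}(x_j)}$ over the covering does not lose decay (cf.\ Remark \ref{gfhgdhfghdgfhdgfhdgfhertey8we837ofhgh;kjojhiop[uolip[mlo0eru89u90e98j7897989}). This is handled by choosing $w_{B_R}$ and $w_{B_{R\delta^2}}$ to be Schwartz envelopes with sufficiently large power decay and using the translation invariance of Proposition \ref{hcnyf7yt75ycn8u32r8907n580-9=--qc mvntvu5n8t}. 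Once this weight bookkeeping is carried out and the Jacobian factors from $L_\tau$ are matched on both sides, the claimed inequality follows directly from Proposition \ref{hcnyf7yt75ycn8u32r8907n580-9=--qc mvntvu5n8t}; the two-dimensional hypothesis $K_{p,2}(\delta')\lesssim_\epsilon\delta'^{-\epsilon}$ is not consumed a second time here, having already been used inside Proposition \ref{hcnyf7yt75ycn8u32r8907n580-9=--qc mvntvu5n8t}.
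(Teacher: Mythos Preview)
Your proposal is correct and follows essentially the same approach as the paper: perform the parabolic rescaling $L_\tau$ so that $\widehat{g_\tau d\sigma}$ becomes $\widehat{G\,d\sigma}$ on the full paraboloid, note that $B_R$ is sent to a $\sim \delta R\times\delta R\times\delta^2 R$ box, cover this box by balls of radius $\delta^2 R$, apply Proposition~\ref{hcnyf7yt75ycn8u32r8907n580-9=--qc mvntvu5n8t} on each, and sum via Minkowski. The paper's proof is terser and omits the weight bookkeeping you flag, but the argument is the same.
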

\begin{proof}
Note that if $\gamma\subset [-1/2,1/2]^{2}$ then
$$\widehat{g_{\pi^{-1}\gamma}d\sigma}(x_1,x_2,x_3)=\int_{B_\eta(c)}\pi g(\xi_1,\xi_2)e(\xi_1x_1+\xi_2x_2+(\xi_1^2+\xi_2^2)x_3)d\xi_1d\xi_2.$$
Let $a=(a_1,a_2)$. Changing variable to $\xi_i=a_i+\delta \xi_i'$ and letting
$$\pi g^{a,\delta}(\xi')=\pi g(a+\delta \xi')$$
$$\gamma'=\delta^{-1}(\gamma-a)$$
we get
$$|\widehat{g_{\pi^{-1}\gamma}d\sigma}(x_1,x_2,x_3)|=\delta^2|\widehat{g^{a,\delta}_{\pi^{-1}\gamma'}d\sigma}(\delta(x_1+2a_1x_3),\delta(x_2+2a_2x_3),\delta^2x_3)|.$$
In particular
$$\|\widehat{g_{\pi^{-1}\gamma}d\sigma}\|_{L^p(w_{B_R})}=\delta^{2-\frac4p}\|\widehat{g^{a,\delta}_{\pi^{-1}\gamma'}d\sigma}\|_{L^p(w_{C_R})},$$
where $C_R$ is a $\sim \delta R\times \delta R\times \delta^2 R$ cylinder. Cover $C_R$ with balls $B_{\delta^2R}$. The result now follows by applying Proposition \ref{hcnyf7yt75ycn8u32r8907n580-9=--qc mvntvu5n8t}
to $g^{a,\delta}$ (with $a$ the center of $\pi(\tau)$) on each $B_{\delta^2R}$ and then summing  using Minkowski's inequality.
\end{proof}

We are now ready to prove Theorem \ref{ch ft7wbtfgb6n17r782brym9,iqmivpk[l} for $n=3$. Let $K=\nu^{-1}$. Iterate Proposition \ref{jcfn vrfwyt8981  =,i90t8-kc0r=90,it90} staring with scale $\delta=1$ until we reach scale $\delta=R^{-1/2}$.
Each iteration  lowers the scale of the caps from $\delta$ to at least $\frac{\delta}{K^{1/2}}$. Thus we have to iterate $\sim \log_K R$ times. Since $$C_{p,3}((\delta^2 R)^{-1},K^{-1})\lesssim C_{p,3}(R^{-1},\nu)$$ we get for each $\epsilon>0$
$$\|\widehat{gd\sigma}\|_{L^p(w_{B_R})}\le (CC_\epsilon K^\epsilon)^{\log_KR} K^{10}C_{p,3}(R^{-1},\nu)(\sum_{{\Delta:\frac1{R^{1/2}}\text{ cap}}}\|\widehat{g_\Delta d\sigma}\|_{L^p(w_{B_R})}^2)^{1/2}=$$
$$R^{-\log_{\nu}(CC_\epsilon)+\epsilon}\nu^{-10}C_{p,3}(R^{-1},\nu)(\sum_{{\Delta:\frac1{R^{1/2}}\text{ cap}}}\|\widehat{g_\Delta d\sigma}\|_{L^p(w_{B_R})}^2)^{1/2}.$$
The result follows since $C,C_\epsilon$ doe not depend on $\nu$.

To summarize, the proof of Theorem \ref{ch ft7wbtfgb6n17r782brym9,iqmivpk[l}  for $n=3$ relied on the hypothesis that the contribution coming from caps living near the intersection of $P^2$ with a  plane is controlled by $K_{p,2}(\delta)=O(\delta^{-\epsilon})$. When $n\ge 4$, the hypothesis $K_{p,d}(\delta)=O(\delta^{-\epsilon})$ for $2\le d\le n-1$ plays the same role, it controls the contribution coming from caps living near lower dimensional elliptic paraboloids with principal curvatures roughly 1. And of course, no such hypothesis is needed when $n=2$. The statement and the proof of Proposition \ref{jcfn vrfwyt8981  =,i90t8-kc0r=90,it90} for these values of $n$ will hold without further modifications.

\section{Proof of  Theorem \ref{thmmm1} for the paraboloid}
\label{s8}
\bigskip

In this section we prove Theorem \ref{thmmm1} for $P^{n-1}$. We first consider the open range $p>\frac{2(n+1)}{n-1}$, and in the end of the section we prove the result for the endpoint.
We use notation from the previous section such as $K_{p,n}(\delta),C_{p,n}(\delta,\nu)$ and  $\delta=N^{-1}$.

Proposition \ref{propo:parabooorescal} shows that $K_{p,n}(\delta)\lesssim K_{p,n}(\delta^{1/2})^2$. Let
.$$\gamma=\liminf_{\delta\to 0}\frac{\log K_{p,n}(\delta)}{\log(\delta^{-1})}.$$
It follows that for each $\epsilon$
$$\delta^{-\gamma}\lesssim K_{p,n}(\delta)\lesssim_\epsilon \delta^{-\gamma-\epsilon}.$$
Write $\gamma=\frac{n-1}{4}-\frac{n+1}{2p}+\alpha$. We have to show that $\alpha=0$.

The following multilinear restriction estimate from \cite{BCT} will play a key role in our proof.

\begin{theorem}
\label{invthm1}
Let $\tau_1,\ldots\,\tau_n$ be $\nu$-transverse caps on $P^{n-1}$ and assume $\widehat{f_i}$ is supported on the $\delta$-neighborhood of $\tau_i$.
Then we have
$$\|(\prod_{i=1}^n|f_i|)^{1/n}\|_{L^{\frac{2n}{n-1}}(B_N)}\lesssim_{\epsilon,\nu} N^{-\frac12+\epsilon}(\prod_{i=1}^n\|f_i\|_{L^2})^{1/n}.$$
\end{theorem}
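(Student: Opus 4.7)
The plan is to follow the Bennett--Carbery--Tao strategy: reduce the multilinear restriction estimate to the multilinear Kakeya inequality by wave packet decomposition, establish multilinear Kakeya as the hard geometric core, and close up with an induction on scales that absorbs a harmless $N^\epsilon$ loss.

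First I would perform a wave packet decomposition of each $f_i$. Since $\widehat{f_i}$ lives in the $\delta = N^{-1}$-neighborhood of $\tau_i$, partition $\tau_i$ into essentially disjoint $\delta^{1/2}$-caps $\theta$ and write
$$f_i = \sum_{T\in \H_i} a_T \psi_T,$$
where each $T$ is an $N^{1/2}\times\cdots\times N^{1/2}\times N$ tube with long axis along the normal direction of the corresponding $\theta\subset\tau_i$, and $\psi_T$ is an $L^\infty$-normalized bump adapted to $T$. By Plancherel one has $\sum_T |a_T|^2 \sim N^{-(n+1)/2}\|f_i\|_2^2$, and on $B_N$ each $|f_i|$ is well-approximated by $\sum_T |a_T|\chi_T$. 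The $\nu$-transversality of the $\tau_i$ implies the crucial geometric fact that an $n$-tuple of tubes $T_1\in\H_1,\ldots,T_n\in\H_n$ intersects in a set of volume $\lesssim_\nu N^{n/2}$, which is the mechanism producing the gain $N^{-1/2}$ in the target exponent.

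Raising the desired inequality to the $p=2n/(n-1)$ power and expanding the product, one can reduce (after pigeonholing the wave packet coefficients into dyadic levels and using Cauchy--Schwarz to pair tubes) to the multilinear Kakeya inequality: for any finite families $\mathbb{T}_i$ of tubes with directions drawn from the $\nu$-transverse caps $\tau_i$,
$$\int_{\R^n}\prod_{i=1}^{n}\Bigl(\sum_{T\in \mathbb{T}_i}\chi_T(x)\Bigr)^{\!1/(n-1)}dx\lesssim_{\epsilon,\nu} N^{\epsilon}\prod_{i=1}^{n}\bigl(N^{1/2}|\mathbb{T}_i|\bigr)^{1/(n-1)}.$$
This Kakeya bound is what translates the per-tuple volume estimate above into a global $L^{2n/(n-1)}$ statement for the geometric multilinear expression $\prod_i(\sum_T \chi_T)$, which in turn controls $\prod_i |f_i|$.

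The main obstacle, and the heart of the proof, is establishing the multilinear Kakeya inequality itself. For $n=2$ it is essentially the Loomis--Whitney inequality, obtained by Cauchy--Schwarz and Fubini. For $n\ge 3$ it is a genuinely deep statement; one route is the original Bennett--Carbery--Tao monotonicity argument, which replaces each $\chi_T$ by a Gaussian, evolves these Gaussians under the heat flow, and shows that a carefully chosen functional is monotone in time, reducing to a trivial large-time model. An alternative is Guth's polynomial-method proof, which uses polynomial ham-sandwich to partition $\R^n$ into cells and reduces to Loomis--Whitney on the algebraic boundary hypersurface. Either route gives the desired bound with only an $N^\epsilon$ (or even no) loss.

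The final step is an induction on the scale $N$: the base case at scale $O(1)$ is trivial, and at scale $N$ one partitions $B_N$ into balls of intermediate radius $N^{1-\kappa}$ for small $\kappa>0$, applies the induction hypothesis together with the Kakeya bound above on each ball, and sums. The logarithmic depth of this iteration is exactly the source of the $N^\epsilon$ loss in the statement. The hard part throughout is the Kakeya inequality; the restriction-to-Kakeya reduction, while somewhat intricate in its bookkeeping, is standard wave-packet analysis, and the induction on scales is routine once Kakeya is in hand.
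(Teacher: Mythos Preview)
The paper does not prove this statement at all: Theorem~\ref{invthm1} is quoted verbatim as ``the following multilinear restriction estimate from \cite{BCT}'' and used as a black box throughout Section~\ref{s8}. There is therefore no ``paper's own proof'' to compare against.

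That said, your sketch is a faithful outline of the Bennett--Carbery--Tao argument you are citing: wave packet decomposition reduces multilinear restriction to multilinear Kakeya, multilinear Kakeya is established (either by the heat-flow monotonicity of \cite{BCT} or Guth's polynomial method \cite{Guth}), and an induction on scales converts the Kakeya bound back into the restriction estimate with an $N^\epsilon$ loss. As a high-level plan this is correct; the only caveat is that the reduction from restriction to Kakeya is not quite as direct as ``Cauchy--Schwarz and expand'' --- in \cite{BCT} this step is itself carried out by an induction on scales in which the Kakeya inequality is invoked at each stage, and your description somewhat understates this. But there is no genuine gap in the strategy.
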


Using Plancherel's identity this easily implies that
$$C_{p,n}(\delta,\nu)\lesssim_{\epsilon,\nu} \delta^{-\epsilon}$$
for $p=\frac{2n}{n-1}$. Combined with the Bourgain-Guth induction on scales this further leads to
$$K_{p,n}(\delta)\lesssim_\epsilon \delta^{-\epsilon}$$
for $2\le p\le \frac{2n}{n-1}$. These inequalities were proved in \cite{Bo2}. We will not rely on them in our argument below.
\bigskip

We now present the first step of our proof, which shows how to interpolate the $\|\cdot\|_{p,\delta}$ norms.

\begin{proposition}
\label{invprop1}
Let $\tau_1,\ldots\,\tau_n$ be $\nu$-transverse caps on $P^{n-1}$ and assume $\widehat{f_i}$ is supported on the $\delta$-neighborhood of $\tau_i$.
Then we have for each $\frac{2n}{n-1}\le p\le \infty$
\begin{equation}
\label{inv2}
\|(\prod_{i=1}^n|f_i|)^{1/n}\|_{L^p(B_N)}\lesssim_{\epsilon,\nu} N^{\frac{n-1}{4}-\frac{n^2+n}{2p(n-1)}+\epsilon}(\prod_{i=1}^n\|f_i\|_{\frac{p(n-1)}{n},\delta})^{\frac1{n}},
\end{equation}
and also
\begin{equation}
\label{inv6}
\|[\prod_{i=1}^n(\sum_{\theta\in \P_\delta}|f_{i,\theta}|^2)^{1/2}]^{1/n}\|_{L^p(B_N)}\lesssim_{\epsilon,\nu} N^{-\frac{n}{(n-1)p}+\epsilon}(\prod_{i=1}^n\|f_i\|_{\frac{p(n-1)}{n},\delta})^{1/n}.
\end{equation}
\end{proposition}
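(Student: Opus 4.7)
My plan is to prove \eqref{inv2} and \eqref{inv6} in parallel by interpolating at the $L^p$-level between the endpoints $p=\frac{2n}{n-1}$ (where Theorem \ref{invthm1} provides the deep input) and $p=\infty$ (where only elementary pointwise bounds are needed), after first reducing matters to the case where each $f_i$ is a scalar multiple of a balanced $N$-function so that Lemma \ref{lemmabalancednfunc}(ii) becomes available.

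The first step is a pigeonholing based on the wave packet decomposition. Applying Lemma \ref{WPD} and then Lemma \ref{lemmabalancednfunc}(i) to each $f_i$, I write $f_i=\sum_{\lambda_i,j_i}\lambda_i g_{i,\lambda_i,j_i}$ with $O((\log N)^2)$ dyadic terms, each $g_{i,\lambda_i,j_i}$ a balanced $N$-function. By the subadditivity of $t\mapsto t^{1/n}$ and the triangle inequality in $L^p$, \eqref{inv2} reduces, at the cost of a factor $N^\epsilon$, to the case $f_i=\lambda_i g_i$ with $g_i$ balanced. For \eqref{inv6} the quadratic nature of the square function forces one extra estimate, namely $|\sum_\mu\mu a_\mu|^2\lesssim(\log N)\sum_\mu\mu^2|a_\mu|^2$ followed by $(\sum_\mu b_\mu^2)^{1/2}\le\sum_\mu b_\mu$, after which the same reduction applies.

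Once we are dealing with products of $\lambda_i g_i$, I interpolate at the $L^p$ level with $\theta_p:=\frac{2n}{(n-1)p}$. For \eqref{inv2} the $L^{2n/(n-1)}$ endpoint is Theorem \ref{invthm1} itself, while the $L^\infty$ endpoint uses Cauchy--Schwarz over the $\sim N^{(n-1)/2}$ caps to get $\|g_i\|_\infty\le N^{(n-1)/4}\|g_i\|_{\infty,\delta}$. For \eqref{inv6} the $L^\infty$ endpoint is the trivial $(\sum_\theta|f_{i,\theta}|^2)^{1/2}\le\|f_i\|_{\infty,\delta}$, and the $L^{2n/(n-1)}$ endpoint is obtained via Khintchine's inequality: setting $F_i:=\sum_\theta\varepsilon_{i,\theta}f_{i,\theta}$ for independent random signs, one has $\|F_i\|_2=\|f_i\|_2$ and $\mathbb{E}_\varepsilon|F_i|^{2n/(n-1)}\sim(\sum_\theta|f_{i,\theta}|^2)^{n/(n-1)}$, so applying Theorem \ref{invthm1} to $F_1,\ldots,F_n$ and taking expectation yields the $N^{-1/2+\epsilon}$ bound for the square-function product.

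The final step is to repackage the resulting right-hand side in terms of $\|f_i\|_{q,\delta}$ with $q=\frac{p(n-1)}{n}$. Since each $g_i$ has Fourier support in $\A_\delta$, Plancherel gives $\|g_i\|_2\sim\|g_i\|_{2,\delta}$, and Lemma \ref{lemmabalancednfunc}(ii) gives $\|g_i\|_{q,\delta}\sim\|g_i\|_{2,\delta}^{\theta_p}\|g_i\|_{\infty,\delta}^{1-\theta_p}$, since $2/q=\theta_p$. A direct calculation then identifies the interpolated $N$-exponent $\frac{n-1}{4}(1-\theta_p)-\frac{\theta_p}{2}$ with $\frac{n-1}{4}-\frac{n^2+n}{2p(n-1)}$ as required for \eqref{inv2}, and gives $-\frac{\theta_p}{2}=-\frac{n}{(n-1)p}$ for \eqref{inv6}. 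The only genuine obstacle is the reduction to balanced $N$-functions: only for these does the $\gtrsim$-direction of the interpolation identity \eqref{EE10} hold, whereas for arbitrary $f_i$ one only has the Hölder estimate \eqref{EE1}, which runs in the opposite direction and is insufficient for the passage from $\|g_i\|_{2,\delta},\|g_i\|_{\infty,\delta}$ back to $\|g_i\|_{q,\delta}$.
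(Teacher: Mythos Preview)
Your proposal is correct and follows essentially the same approach as the paper: reduce via the wave packet decomposition (Lemma \ref{WPD}) and Lemma \ref{lemmabalancednfunc}(i) to balanced $N$-functions, interpolate between the multilinear restriction endpoint $p=\frac{2n}{n-1}$ (Theorem \ref{invthm1}) and the trivial $p=\infty$ bound, use randomization/Khintchine for the square-function variant \eqref{inv6}, and then invoke \eqref{EE10} to repackage in terms of $\|\cdot\|_{q,\delta}$ with $q=\frac{p(n-1)}{n}$. One small point you should make explicit: the sum over dyadic $\lambda$ in Lemma \ref{WPD} is a priori infinite, so your claim of ``$O((\log N)^2)$ terms'' needs the observation (made in the paper) that after normalizing $\|f_i\|_{q,\delta}=1$ one has $\|f_i\|_{\infty,\delta}\lesssim N^C$ by Bernstein, while the contribution of the tail $\lambda\lesssim N^{-C}$ is negligible since $\|f_{i,\lambda}\|_{L^p(B_N)}\le N^C$ and the right-hand sides of \eqref{inv2}--\eqref{inv6} are $\gtrsim N^{-C}$.
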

\begin{proof}
Let us start with the proof of  \eqref{inv2}.
Let $\lambda_n=N^{\frac{n-1}{4}}$ and $F=(\prod_{i=1}^n|f_i|)^{1/n}$.
Note that by Cauchy-Schwartz we have
$$\|F\|_\infty\le \lambda_n(\prod_{i=1}^n\|f_i\|_{\infty,\delta})^{\frac1n}.$$
By combining this with Theorem \ref{invthm1} and H\"older's inequality
we find that
\begin{equation}
\label{inv3}
\|F\|_{L^p(B_N)}\lesssim_{\epsilon,\nu} \lambda_n^{1-\frac{2n}{(n-1)p}}N^{\epsilon-\frac{n}{(n-1)p}}(\prod_{i=1}^n(\|f_i\|_{2}^{\frac{2n}{(n-1)p}}\|f_i\|_{\infty,\delta}^{1-\frac{2n}{(n-1)p}}))^{\frac1n}.
\end{equation}

Finally, to get \eqref{inv2}, we use the wave packet decomposition and the fact that
$$\|f\|_{2}^{\frac{2n}{(n-1)p}}\|f\|_{\infty,\delta}^{1-\frac{2n}{(n-1)p}}\sim\|f\|_{\frac{p(n-1)}{n},\delta}$$
if $f$ is a balanced $N$-function. We can assume $\|f_i\|_{\frac{p(n-1)}{n},\delta}=1$ for each $i$. Write like in Lemma \ref{WPD}
$$f_i=\sum_{\lambda\lesssim \|f_i\|_{\infty,\delta}}\lambda f_{i,\lambda}.$$
We use the triangle inequality to estimate the left hand side of \eqref{inv2}.
In the following $C$ will denote a large enough constant depending on $n,p$.
As $\|f_{i,\lambda}\|_\infty\lesssim 1$, we have that
$$\|f_{i,\lambda}\|_{L^p(B_N)}\le N^C.$$
As the right hand side in \eqref{inv2} is $\gtrsim N^{-C}$, it follows that the contribution coming from $\lambda f_{i,\lambda}$ with $\lambda\lesssim N^{-C}$ is well controlled.

On the other hand, recall that  by Bernstein's inequality, $\|f_i\|_{\infty,\delta}\lesssim N^{C}$. This shows that it suffices to consider $O(\log \delta^{-1})$ many terms in the triangle inequality. Each of these terms is dealt with by using \eqref{inv3}, Lemma \ref{lemmabalancednfunc} and \eqref{EE8}.

The proof of \eqref{inv6} is very similar. First, a randomization argument and Theorem \ref{invthm1} imply that
$$\|[\prod_{i=1}^n(\sum_{\theta\in \P_\delta}|f_{i,\theta}|^2)^{1/2}]^{1/n}\|_{L^{\frac{2n}{n-1}}(B_N)}\lesssim_{\epsilon,\nu}N^{-\frac12+\epsilon}
(\prod_{i=1}^n\|f_i\|_{2})^{1/n}.$$
 Combining this with the trivial inequality
$$\|[\prod_{i=1}^n(\sum_{\theta\in \P_\delta}|f_{i,\theta}|^2)^{1/2}]^{1/n}\|_{L^{\infty}(B_N)}\le
(\prod_{i=1}^n\|f_i\|_{\infty,\delta})^{1/n}$$
then with H\"older's inequality gives
$$\|[\prod_{i=1}^n(\sum_{\theta\in \P_\delta}|f_{i,\theta}|^2)^{1/2}]^{1/n}\|_{L^p(B_N)}\lesssim_{\epsilon,\nu} N^{-\frac{n}{(n-1)p}+\epsilon}(\prod_{i=1}^n(\|f\|_{2}^{\frac{2n}{(n-1)p}}\|f_i\|_{\infty,\delta}^{1-\frac{2n}{(n-1)p}}))^{1/n}.$$
Then \eqref{inv6} follows from interpolation, as explained before.
\end{proof}
\bigskip

At this point it is useful to introduce the local norms for $g:P^{n-1}\to\C$ and arbitrary balls $B$
$$\|\widehat{gd\sigma}\|_{p,\delta,B}=(\sum_{\theta:\;\delta^{1/2}-\text{cap}}\|\widehat{g_\theta d\sigma}\|_{L^p(w_{B})}^2)^{1/2}.$$
Fix $\frac{2n}{n-1}<p<\infty$. To simplify notation we also introduce the following quantities. First, define $$\xi=\frac{2}{(p-2)(n-1)}\;\;\;\;\text{  and  }\;\;\;\;\eta=\frac{n(np-2n-p-2)}{2p(n-1)^2(p-2)}.$$
For a fixed $0\le \beta\le 1$ consider the inequality
\begin{equation}
\label{inv999}
\|(\prod_{i=1}^n|\widehat{g_id\sigma}|)^{1/n}\|_{L^p(B_N)}
\lesssim_{\epsilon,\nu} A_\beta(N)N^{\epsilon}(\prod_{i=1}^n\|\widehat{g_id\sigma}\|
_{p,\delta,B_N})^{\frac{1-\beta}n}(\prod_{i=1}^n\|\widehat{g_id\sigma}\|
_{\frac{p(n-1)}{n},\delta,B_N})^{\frac\beta{n}},
\end{equation}
for arbitrary $\epsilon>0$, $\nu$, $N$, $g_i$ and $B_N$ as before.

We prove the following result.
\begin{proposition}
\label{propinv5}
(a) Inequality \eqref{inv999} holds true for $\beta=1$ with $A_1(N)=N^{\frac{n-1}{4}-\frac{n^2+n}{2p(n-1)}}$.

(b) Moreover, if we assume \eqref{inv999} for some $\beta\in (0,1]$, then we also have \eqref{inv999} for $\beta\xi$ with
$$A_{\beta\xi}(N)= A_\beta(N^{1/2})N^{\beta\eta+\frac{\gamma}{2}(1-\beta\xi)}.$$
\end{proposition}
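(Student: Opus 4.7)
Part (a) is immediate from Proposition \ref{invprop1}: inequality \eqref{inv2} is exactly \eqref{inv999} with $\beta=1$ and $A_1(N)=N^{\frac{n-1}{4}-\frac{n^2+n}{2p(n-1)}}$, once one writes $f_i=\widehat{g_id\sigma}$ and absorbs the usual localization/weight passage described in Remark \ref{gfhgdhfghdgfhdgfhdgfhertey8we837ofhgh;kjojhiop[uolip[mlo0eru89u90e98j7897989}.

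For part (b), the plan is a parabolic-rescaling/induction-on-scales bootstrap in the spirit of Bourgain--Guth. I would first cover $B_N$ by finitely overlapping balls $\Delta$ of radius $N^{1/2}$ and apply the assumed inequality \eqref{inv999} (with parameter $\beta$) at the smaller scale on each $\Delta$. Since in this application $\delta'=N^{-1/2}$, the resulting local decoupling norms on the right use $N^{-1/4}$-caps $\alpha$. The key second step is to interpolate the $L^q$ decoupling norm on each $\Delta$ against the $L^2$ and $L^p$ decoupling norms using the Hölder relation $1/q=\xi/2+(1-\xi)/p$, which is precisely the identity that defines $\xi$. This produces
\bas
\|(\textstyle\prod_i|\widehat{g_id\sigma}|)^{1/n}\|_{L^p(\Delta)}\lesssim A_\beta(N^{1/2})N^\epsilon\prod_i\|\widehat{g_id\sigma}\|_{p,N^{-1/2},\Delta}^{(1-\beta\xi)/n}\prod_i\|\widehat{g_id\sigma}\|_{2,N^{-1/2},\Delta}^{\beta\xi/n},
\eas
which is where $\beta$ is replaced by $\beta\xi$.

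Next I would raise to the $p$-th power, sum over $\Delta$, and apply Hölder's inequality (in $\Delta$ with exponents $1/(1-\beta\xi)$, $1/(\beta\xi)$, and in $i$ with equal exponents $n$) together with Minkowski's inequality (to swap $\sum_\Delta$ with the square-sum over the $N^{-1/4}$-caps). This globalizes the two factors to $B_N$; the $\|\cdot\|_{2,N^{-1/2},B_N}$ factor equals $\|\cdot\|_{L^2(w_{B_N})}$ by Plancherel.

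Finally I would convert the two global factors back to scale $N^{-1}$. For the $L^p$ factor, parabolic rescaling (Proposition \ref{propo:parabooorescal}) applied on each $N^{-1/4}$-cap, combined with the bound $K_{p,n}(N^{-1/2})\lesssim_\epsilon N^{\gamma/2+\epsilon}$, costs a factor $N^{\gamma/2+\epsilon}$ per copy and accounts for the $\tfrac\gamma2(1-\beta\xi)$ piece of the exponent. For the $L^2$ factor one passes to $\|\cdot\|_{q,N^{-1},B_N}$ by Hölder's inequality; this is the delicate step, which must be performed \emph{locally} on each $\Delta$ (giving a loss $|\Delta|^{1/2-1/q}$ per ball) before summing via superadditivity of $t\mapsto t^{p/2}$, rather than globally on $B_N$ (which would give the much larger loss $|B_N|^{1/2-1/q}$). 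Collecting the $N$-powers produced this way yields exactly $\beta\eta$ with the stated value of $\eta$, and combining everything gives $A_{\beta\xi}(N)=A_\beta(N^{1/2})N^{\beta\eta+\gamma(1-\beta\xi)/2}$ as claimed. The main bookkeeping obstacle is precisely this $L^2\to L^q$ conversion: performing Hölder at the correct (intermediate) scale $N^{1/2}$ is what makes the exponent match $\eta$ and lets the subsequent iteration eventually close.
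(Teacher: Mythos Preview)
Your treatment of part (a) is fine and matches the paper.

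For part (b), however, there is a genuine gap in how you handle the $L^2$ factor. Your plan is to bound $\|\widehat{g_id\sigma}\|_{2,\delta^{1/2},\Delta}$ by $|\Delta|^{1/2-1/q}\|\widehat{g_id\sigma}\|_{q,\delta,\Delta}$ via H\"older on each $\Delta$ (here $q=\tfrac{p(n-1)}{n}$), and then sum over $\Delta$ using H\"older in $i$, Minkowski in $\theta$, and superadditivity of $t\mapsto t^{p/q}$. This is all correct as far as it goes, but the exponent it produces is
\[
\frac{n}{2}\Bigl(\frac12-\frac1q\Bigr)=\frac{n}{4}-\frac{n^2}{2p(n-1)},
\]
whereas the stated value of $\eta$ requires the smaller exponent $\eta/\xi=\dfrac{n}{4}-\dfrac{n}{2p}-\dfrac{n}{p(n-1)}=\dfrac{n}{4}-\dfrac{n^2+n}{2p(n-1)}$. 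Your approach therefore overshoots the claimed $A_{\beta\xi}(N)$ by a factor $N^{\beta\xi\cdot n/(2p(n-1))}$, and with this weaker recursion the final iteration does \emph{not} close to $\alpha=0$; it only yields $\alpha\le \dfrac{n\xi}{p(n-1)(1-\xi)}>0$.

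What is missing is a \emph{second} use of the multilinear restriction theorem at this step, exploiting the transversality of the $\tau_i$. The paper does not pass from $L^2$ to $L^q$ by H\"older. Instead it uses H\"older to go from $\|\widehat{g_id\sigma}\|_{2,\delta^{1/2},\Delta}$ to $\|\widehat{g_id\sigma}\|_{p,\delta,\Delta}$ (cost $N^{(n/2)(1/2-1/p)}$), then observes that on each $\Delta$ the quantity $\prod_i\|\widehat{g_id\sigma}\|_{p,\delta,\Delta}^{1/n}$ is essentially the $L^p(\Delta)$ norm of the multilinear square function $\prod_i(\sum_\theta|\widehat{g_{i,\theta}d\sigma}|^2)^{1/(2n)}$. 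Summing these in $\Delta$ reconstitutes the $L^p(w_{B_N})$ norm of this square function, to which one applies the multilinear square-function estimate \eqref{inv6}. That estimate contributes the crucial additional gain $N^{-n/((n-1)p)}$, and it is precisely this gain that turns your exponent into the required $\eta/\xi$. Your ``local H\"older plus superadditivity'' route treats the $g_i$ separately at the summation stage and cannot access this transversality gain.
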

\begin{proof}
The proof of (a) is an immediate consequence of Remark \ref{gfhgdhfghdgfhdgfhdgfhertey8we837ofhgh;kjojhiop[uolip[mlo0eru89u90e98j7897989} and \eqref{inv2}. We next focus on proving (b).
By using H\"older's inequality on the $N^{1/2}$- ball $\Delta$
$$\|\widehat{g_id\sigma}\|_{\frac{p(n-1)}{n},\delta^{1/2},\Delta}\le \|\widehat{g_id\sigma}\|_{p,\delta^{1/2},\Delta}^{1-\frac{2}{(p-2)(n-1)}}\|\widehat{g_id\sigma}\|_{2,\delta^{1/2},\Delta}^{\frac{2}{(p-2)(n-1)}},$$
we get
\begin{equation}
\label{inv4}
\|(\prod_{i=1}^n|\widehat{g_id\sigma}|)^{1/n}\|_{L^p(\Delta)}
\lesssim_{\epsilon,\nu}
N^{\epsilon}A_\beta(N^{1/2})(\prod_{i=1}^n(\|\widehat{g_id\sigma}\|_{p,\delta^{1/2},\Delta}^{1-\xi\beta}\|\widehat{g_id\sigma}\|_{2,\delta^{1/2},\Delta}^{\xi\beta})^{\frac1{n}}.
\end{equation}
 Consider a finitely overlapping cover of $B_N$ with balls $\Delta$ of radius $N^{1/2}$. Note that
 \begin{equation}
\label{harpottt4}
\|(\prod_{i=1}^n\widehat{|g_id\sigma}|)^{1/n}\|_{L^p(B_N)}\lesssim (\sum_{\Delta}\|(\prod_{i=1}^n|\widehat{g_id\sigma}|)^{1/n}\|_{L^p(\Delta)}^p)^{1/p}.
\end{equation}
We will use \eqref{inv4}  to bound each $\|(\prod_{i=1}^n|\widehat{g_id\sigma}|)^{1/n}\|_{L^p(\Delta)}$. After raising to  the $p^{th}$ power,  the right hand side of \eqref{inv4} is summed using H\"older's inequality
\begin{equation}
\label{harpottt1}
\sum_{\Delta}b_{\Delta}^{\xi\beta p}\prod_{i=1}^na_{\Delta,i}^{\frac{1-\xi\beta}{n}p}\le (\sum_{\Delta}b_{\Delta}^p)^{\xi\beta}\prod_{i=1}^n(\sum_{\Delta}a_{\Delta,i}^p)^{\frac{1-\xi\beta}{n}},
\end{equation}
with
$$a_{\Delta,i}=\|\widehat{g_id\sigma}\|_{p,\delta^{\frac12},\Delta}$$
and
$$b_{\Delta}=(\prod_{i=1}^n\|\widehat{g_id\sigma}\|_{2,\delta^{\frac12},\Delta}^p)^{\frac1{np}}.$$

To sum the factors $a_{\Delta,i}^p$ we invoke first Minkowski's inequality then Proposition \ref{propo:parabooorescal} to get
\begin{equation}
\label{harpottt2}
\sum_{\Delta}\|\widehat{g_id\sigma}\|_{p,\delta^{\frac12},\Delta}^p\lesssim\|\widehat{g_id\sigma}\|_{p,\delta^{\frac12},B_N}^p\lesssim K_{p,n}(\delta^{1/2})^{p}\|\widehat{g_id\sigma}\|_{p,\delta,B_N}^p.
\end{equation}
We next show how to sum the factors $b_{\Delta}^p$. Rather than using the $n$-linear H\"older's inequality followed by Minkowski's inequality as we did with the terms $a_{\Delta,i}^p$, we first transform  $b_{\Delta}^p$ to make it amenable to another application of  Theorem \ref{invthm1}.

To this end we recall the standard formalism (see e.g. sections 2-4 in \cite{BG}) that for each $\delta^{1/2}$-cap $\theta$,
$|\widehat{g_\theta d\sigma}|$ is essentially constant on each $\Delta$. Thus, in particular it is easy to see that
$$\sum_{\Delta\subset B_N}\prod_{i=1}^{n}\|\widehat{g_id\sigma}\|_{p,\delta,\Delta}^{\frac{p}n}\lesssim \sum_{\Delta\subset B_N}\|\prod_{i=1}^n(\sum_{\theta:\;\delta^{1/2}-\text{cap}\atop{\theta\subset\tau_i}}|\widehat{g_{i,\theta}d\sigma}|^2)^{\frac1{2n}}\|_{L^p(w_{\Delta})}^p\lesssim$$
\begin{equation}
\label{inv5}
 \lesssim\|\prod_{i=1}^n(\sum_{\theta:\;\delta^{1/2}-\text{cap}\atop{\theta\subset\tau_i}}|\widehat{g_{i,\theta}d\sigma}|^2)^{\frac1{2n}}\|_{L^p(w_{B_N})}^p.
\end{equation}
Note also that by orthogonality followed by H\"older's inequality, for each $\Delta$
\begin{equation}
\label{inv7}
\|\widehat{g_id\sigma}\|_{2,\delta^{\frac12},\Delta}\lesssim \|\widehat{g_id\sigma}\|_{2,\delta,\Delta}\lesssim N^{\frac{n}2(\frac12-\frac1p)}\|\widehat{g_id\sigma}\|_{p,\delta,\Delta}.
\end{equation}
 Now \eqref{inv7}, \eqref{inv5} and \eqref{inv6} lead to
\begin{equation}
\label{harpottt3}
\sum_{\Delta}b_\Delta^p\lesssim_{\epsilon,\nu} N^{-\frac{n}{(n-1)}+\epsilon}N^{\frac{n}{2}(\frac{p}{2}-1)}(\prod_{i=1}^n\|\widehat{g_id\sigma}\|_{\frac{(n-1)p}{n},\delta,B_N})^{1/n}.
\end{equation}
To end the argument simply invoke  estimates \eqref{harpottt4}, \eqref{harpottt1}, \eqref{harpottt2} and \eqref{harpottt3}.
\end{proof}
\bigskip

We can now present the final stage of the proof of Theorem \ref{thmmm1} for $p>\frac{2(n+1)}{n-1}$. Recall from the beginning of this section that this amounts to proving that $\alpha=0$. Since $p>\frac{2(n+1)}{n-1}$ we have that $\xi<\frac12$. We begin with a general discussion that applies in every dimension $n$, and then conclude with an inductive argument.

A simple computation reveals that the inequality $\alpha>0$ is equivalent with
$$\gamma\frac{1-\xi}{1-2\xi}>\frac{n-1}{4}-\frac{n^2+n}{2p(n-1)}+\frac{2\eta}{1-2\xi}.$$
It follows  that if $\alpha>0$ we can choose $s_0\in\N$  large enough and  $\epsilon_0$ small enough so that
$$\gamma(\frac{1-\xi}{1-2\xi}-\frac{\xi(2\xi)^{s_0}}{1-2\xi})>
$$
\begin{equation}
\label{mamatata}
\frac{n-1}{4}-\frac{n^2+n}{2p(n-1)}+2^{s_0}\epsilon_0+\frac{2\eta}{1-2\xi}(1-(2\xi)^{s_0})+\frac{n}{(n-1)p}(2\xi)^{s_0}.
\end{equation}

Choose now $\nu_0>0$ small enough such that $\epsilon_0>\epsilon(\nu_0)$, with $\epsilon(\nu_0)$ as in  Theorem \ref{ch ft7wbtfgb6n17r782brym9,iqmivpk[l}.
Note that $s_0$, $\epsilon_0$ and $\nu_0$ depend only on the fixed parameters $p,n,\alpha$. As a result, we follow our convention and do not record the dependence on them when using the symbol $\lesssim$.

Proposition \ref{propinv5} implies that for each $s\ge 0$
$$A_{\xi^s}(N)= N^{\psi(\xi^s)}$$
with
\begin{equation}
\label{inv13}
\psi(\xi^{s+1})= \frac12\psi(\xi^s)+\frac\gamma2(1-\xi^{s+1})+\eta\xi^s.
\end{equation}
Iterating \eqref{inv13}  gives
\begin{equation}
\label{inv15}
\psi(\xi^s)=\frac1{2^s}\psi(1)+\gamma(1-2^{-s})+2(\frac\eta\xi-\frac\gamma2)\frac{2^{-s}-\xi^{s}}{\xi^{-1}-2}
\end{equation}

Note that by H\"older's inequality
$$(\prod_{i=1}^n\|\widehat{g_id\sigma}\|
_{\frac{p(n-1)}{n},\delta,B_N})^{\frac1{n}}\lesssim (\prod_{i=1}^n\|\widehat{g_id\sigma}\|
_{p,\delta,B_N})^{\frac1{n}}N^{\frac{n}{(n-1)p}}.$$ Combining this with \eqref{inv999} for $\beta=\xi^s$  we get
\begin{equation}
\label{7743048765898}
C_{p,n}(\delta,\nu_0)\lesssim_{\epsilon,s} \delta^{-\epsilon}A_{\xi^s}(N)N^{\frac{n\xi^s}{(n-1)p}}.
\end{equation}
\bigskip

To finish the argument, we first consider $n=2$.
Since \eqref{7743048765898} (with $s=s_0$) holds for arbitrarily small $\delta$ and $\epsilon$, using Theorem \ref{ch ft7wbtfgb6n17r782brym9,iqmivpk[l} we get
\begin{equation}
\label{inv14}
\gamma-\epsilon_0\le \psi(\xi^{s_0})+\frac{n\xi^{s_0}}{(n-1)p}.
\end{equation}
Combining \eqref{inv15} and \eqref{inv14}  we find
$$\gamma(\frac{1-\xi}{1-2\xi}-\frac{\xi(2\xi)^{s_0}}{1-2\xi})\le \psi(1)+2^{s_0}\epsilon_0+\frac{2\eta}{1-2\xi}(1-(2\xi)^{s_0})+\frac{n}{(n-1)p}(2\xi)^{s_0},$$
which  contradicts \eqref{mamatata}, if $\alpha>0$. Thus $\alpha=0$ and Theorem \ref{thmmm1} is proved for $n=2$ and $p>6$.

The higher dimensional proof follows by induction. Assume that $n\ge 3$ and that Theorem \ref{thmmm1} was proved for all $2\le d\le n-1$ when $p>\frac{2(d+1)}{d-1}$. To prove Theorem \ref{thmmm1} in $\R^n$ for $p>\frac{2(n+1)}{n-1}$, it suffices to prove it for $\frac{2(n+1)}{n-1}<p<\frac{2n}{n-2}$. Note that in this range we have $p<\frac{2(d+1)}{d-1}$ for each $2\le d\le n-1$ and thus Theorem \ref{ch ft7wbtfgb6n17r782brym9,iqmivpk[l} is applicable, due to the induction hypothesis. To finish the argument, one applies the same reasoning as in the case $n=2$, to conclude that $\alpha>0$ leads to as contradiction.
\bigskip

It remains to see why Theorem \ref{thmmm1} holds for the endpoint $p=p_n=\frac{2(n+1)}{n-1}$. Remark \ref{gfhgdhfghdgfhdgfhdgfhertey8we837ofhgh;kjojhiop[uolip[mlo0eru89u90e98j7897989} shows that it suffices to investigate the best constant in the localized inequality
\begin{equation}
\label{inv50}
\|f\|_{L^p({B_N})}\le K_{p,n}^{(3)}(\delta)(\sum_{\theta\in \P_\delta}\|f_\theta\|_{L^p(\R^n)}^2)^{1/2},
\end{equation}
for each $N$-ball $B_N$. It suffices now to invoke Theorem \ref{thmmm1} for $p>\frac{2(n+1)}{n-1}$ together with
$$\|f\|_{L^{p_n}({B_N})}\lesssim \|f\|_{L^{p}({B_N})}N^{\frac{n}{p_n}-\frac{n}{p}}\;\;\;\text{(by H\"older's inequality)}$$
$$\|f_\theta\|_{L^p(\R^n)}\lesssim N^{\frac{n+1}{2p}-\frac{n+1}{2p_n}}\|f_\theta\|_{L^{p_n}(\R^n)}\;\;\;\text{(by Bernstein's inequality)},$$
and then to let $p\to p_n$.

\medskip

\section{Extension to other hypersurfaces}
\label{s9}

Let $S$ be a compact $C^2$ hypersurface in $\R^n$ with  positive definite second fundamental form. Recall that we have proved Theorem \ref{thmmm1} for $P^{n-1}$. By rescaling, the proof extends to elliptic paraboloids of the form
$$\{(\xi_1,\ldots,\xi_{n-1},\theta_1\xi_1^2+\ldots+\theta_{n-1}\xi_{n-1}^2)\in\R^n:\;|\xi_i|\le 1/2\},$$
with $\theta_i\in[C^{-1},C]$. The implicit bounds will of course depend on $C>0$.

We now show how to extend the result in Theorem \ref{thmmm1} to  $S$ as above. It suffices to prove the result for $p=\frac{2(n+1)}{n-1}$. We can assume that all the principal curvatures of $S$ are in $[C^{-1},C]$.

The following argument is sketched in \cite{GS} and was worked out in detail for conical surfaces in \cite{PrSe}.
For $\delta<1$, let as before $K_p(\delta)$ be the smallest constant such that for each   $f$ with Fourier support in $\A_\delta$ we have
$$\|f\|_p\le K_p(\delta)(\sum_{\theta\in \P_\delta}\|f_\theta\|_p^2)^{1/2}.$$
Fix such an $f$. First, note that
$$\|f\|_p\le K_p(\delta^{\frac23})(\sum_{\tau\in \P_{\delta^{\frac23}}}\|f_\tau\|_p^2)^{1/2}.$$
Second, our assumption on the principal curvatures of $S$ combined with Taylor's formula shows that on each $\tau\in \P_{\delta^{\frac23}}$, $S$ is within $\delta$ from a (subset of a) paraboloid with similar principal curvatures. By invoking Theorem \ref{thmmm1} for this paraboloid we get
$$\|f_\tau\|_p\lesssim_\epsilon\delta^{-\epsilon}(\sum_{\theta\in \P_\delta:\theta\subset\tau}\|f_\theta\|_p^2)^{1/2}.$$
For each $\epsilon>0$, we conclude the existence of $C_\epsilon$ such that for each $\delta<1$
$$K_p(\delta)\le C_\epsilon\delta^{-\epsilon}K_p(\delta^{\frac23}).$$
By iteration this immediately leads to $K_p(\delta)\lesssim_\epsilon \delta^{-\epsilon}$.

\section{Proof of Theorem \ref{thmmm1cone}}

To simplify notation we present the argument for $n=3$. Define the extension operator
$$E_Rg(x_1,x_2,x_3)=\int_R g(\xi_1,\xi_2)e(x_1\xi_1+x_2\xi_2+x_3\sqrt{\xi_1^2+\xi_2^2})d\xi_1d\xi_2$$
for a subset $R$ of the annulus
$$A_1=\{(\xi_1,\xi_2):1\le \sqrt{\xi_1^2+\xi_2^2}\le 2\}.$$
It will suffice to prove that
\begin{equation}
\label{jjjjeduuuueu894riefydvfcvtcrysutdgdi}
\|E_{A_1}g\|_{L^6(B_N)}\lesssim_\epsilon N^\epsilon(\sum_{S\subset A_1}\|E_{S}g\|_{L^6(w_{B_N})}^2)^{1/2},
\end{equation}
where the sum is over a tiling of $A_1$ into sectors $S$ of length 1 and aperture $N^{-1/2}$. The idea behind the proof is rather simple, we will apply the decoupling inequality  from Theorem \ref{thmmm1} to the parabola $(\xi,\frac{\xi^2}2)$. The observation that makes this application possible is the fact that
$$|(\sqrt{\xi_1^2+\xi_2^2}-\xi_1)-\frac{\xi_2^2}{2}|$$
is ``small" if $\xi_1$ is ``close" to 1 and $\xi_2$ is ``close" to 0. It remains to quantify the meaning of ``small" and ``close".

The key step is the following partial decoupling for a ``significant" subset of the cone.

\begin{proposition}
\label{prop:egfr76r234543d8ebvgtiuioerhy}
Fix $\nu,\mu>0$ such that $2\mu+\nu\le 1$ and $2\mu\ge\nu$. Given intervals $I\subset [1,2]$ and $J\subset (-\pi/2,\pi/2)$ of lengths $N^{-\nu}$ and $N^{-\mu}$ respectively,
consider the sector $$S=\{(\xi_1,\xi_2):\;\sqrt{\xi_1^2+\xi_2^2}\in I,\;\arctan (\frac{\xi_2}{\xi_1})\in J\}$$
of length $N^{-\nu}$ and aperture $N^{-\mu}$.
We have for each $\epsilon>0$
$$\|E_{S}g\|_{L^6(w_{B_N})}\lesssim_\epsilon N^{\nu\epsilon}(\sum_{S'\subset S}\|E_{S'}g\|_{L^6(w_{B_N})}^2)^{1/2},$$
where the sum runs over a tiling of $S$ into sectors $S'$ of  length $N^{-\nu}$ and aperture $\sim N^{-\mu-\frac\nu2}.$
\end{proposition}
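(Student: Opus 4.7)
The plan is to reduce the decoupling on the conical sector $S$ to the $\ell^2$ decoupling for the parabola, namely Theorem~\ref{thmmm1} with $n=2$ applied at scale $\delta=N^{-2\mu-\nu}$. The two hypotheses on $\mu,\nu$ play complementary roles: $2\mu\ge\nu$ guarantees that $S$ is well-approximated by a parabolic cylinder with error $O(N^{-2\mu-\nu})$, while $2\mu+\nu\le 1$ guarantees that this approximation error is still above the uncertainty scale $N^{-1}$ associated with localization to $B_N$.

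First I would put $S$ into standard position. Using the rotational symmetry of the cone I may assume $J$ is centered at $0$, so that on $S$ we have $\xi_1=r_0+O(N^{-\nu})$ and $\xi_2=O(N^{-\mu})$, where $r_0$ denotes the center of $I$. A Taylor expansion of $\sqrt{\xi_1^2+\xi_2^2}$ around $(r_0,0)$ then yields
\begin{equation*}
\sqrt{\xi_1^2+\xi_2^2}=\xi_1+\frac{\xi_2^2}{2r_0}+O(N^{-2\mu-\nu})+O(N^{-4\mu}),
\end{equation*}
where the first error comes from replacing $1/\bigl(2(r_0+(\xi_1-r_0))\bigr)$ by $1/(2r_0)$ in the quadratic term, and the second is the quartic remainder in $\sqrt{1+t^2}$. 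The hypothesis $2\mu\ge\nu$ absorbs the quartic error into $O(N^{-2\mu-\nu})$, so the image of $S$ on the cone lies within vertical distance $CN^{-2\mu-\nu}$ of the parabolic cylinder $\Pi=\{(\xi_1,\xi_2,\xi_1+\xi_2^2/(2r_0)):(\xi_1,\xi_2)\in S\}$.

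Next I would apply parabolic decoupling to $\Pi$. An affine shear $(\xi_1,\xi_2,\xi_3)\mapsto(\xi_1,\xi_2,\xi_3-\xi_1)$, implemented on the spatial side as a linear modulation (which preserves all $L^p$ norms), sends $\Pi$ to the flat parabolic cylinder $\Pi_0=\{(\xi_1,\xi_2,\xi_2^2/(2r_0))\}$. Multiplying $E_Sg$ by a Schwartz localizer adapted to $B_N$ broadens its frequency support by $N^{-1}$, which by $2\mu+\nu\le 1$ is absorbed into the $CN^{-2\mu-\nu}$-neighborhood of $\Pi_0$. Since this neighborhood is a product in the $\xi_1$ variable, Fubini-Minkowski followed by Theorem~\ref{thmmm1} for the parabola (the case $n=2$, $p=6$) at scale $\delta=CN^{-2\mu-\nu}$ gives $\ell^2$ decoupling into slabs of aperture $\delta^{1/2}=N^{-\mu-\nu/2}$ in the $\xi_2$ direction, losing at most a factor $\delta^{-\epsilon'}=N^{(2\mu+\nu)\epsilon'}$. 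Taking $\epsilon'=\nu\epsilon/(2\mu+\nu)$ yields the advertised $N^{\nu\epsilon}$ bound. Undoing the shear and the rotation, each slab corresponds to a subsector $S'\subset S$ of length $N^{-\nu}$ and aperture $N^{-\mu-\nu/2}$, as required.

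The main technical point is the step above, namely passing between the $L^6(w_{B_N})$ framework of the proposition and a $\delta$-neighborhood formulation amenable to Theorem~\ref{thmmm1}. This is handled in the routine way indicated in Remark~\ref{gfhgdhfghdgfhdgfhdgfhertey8we837ofhgh;kjojhiop[uolip[mlo0eru89u90e98j7897989}: cover $B_N$ by balls of radius $\delta^{-1}=N^{2\mu+\nu}$, apply the local (weighted) version of parabolic decoupling on each, and reassemble using Minkowski's inequality together with the comparability of the various weighted decoupling constants. The scaling hierarchy $N^{-1}\lesssim N^{-2\mu-\nu}\lesssim N^{-\nu}\lesssim 1$ provided by the hypotheses on $\mu,\nu$ makes all these ingredients compatible.
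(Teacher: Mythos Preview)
Your proposal is correct and follows the same strategy as the paper---reduce to parabolic decoupling via $\sqrt{\xi_1^2+\xi_2^2}=\xi_1+\xi_2^2/(2r_0)+O(N^{-2\mu-\nu})$---with the paper differing only in implementation (it uses the exact change of variables $\eta=\sqrt{\xi_1^2+\xi_2^2}-\xi_1$ together with a Fubini-type averaging identity, then parabolic rescaling, in place of your Taylor expansion, shear, and covering by $N^{2\mu+\nu}$-balls). One terminological slip: the frequency shear $(\xi_1,\xi_2,\xi_3)\mapsto(\xi_1,\xi_2,\xi_3-\xi_1)$ corresponds on the spatial side to a linear \emph{change of variables} with Jacobian~$1$, not a modulation, but this still preserves $L^p$ norms and sends $B_N$ to a comparable region, so the argument is unaffected.
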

\begin{proof}
Due to rotational and radial symmetry it suffices to assume $I=[1,1+N^{-\nu}]$ and $J=[N^{-\mu},2N^{-\mu}]$. Moreover, we may also assume that $\xi_1>0$, which implies in particular that
$$|1-\xi_1|\lesssim N^{-\nu},\,\,\,\,|\xi_2^2|\sim N^{-2\mu}.$$
Note that for each function  $F=F(x_1,x_2,x_3)$ we have
\begin{equation}
\label{rioeuvgb823087kswdjioq hufnvjhjkajemuircfgbyed56qwst7whyrnuifcvtnh089}
\|F\|_{L^6(w_{B_N})}\sim \frac1{N^{1/3}}\|\|F(x_1-y_3,x_2+y_2,x_3+y_3)\|_{L_{y_2,y_3}^6(w_{[-N,N]^2})}\|_{L^6_{x_1,x_2,x_3}(w_{B_N})}.
\end{equation}
We  apply this to $$F(x_1,x_2,x_3)=E_{S}g(x_1,x_2,x_3).$$
Fix $(x_1,x_2,x_3)\in B_N$ and evaluate the inner $L^6$ norm in \eqref{rioeuvgb823087kswdjioq hufnvjhjkajemuircfgbyed56qwst7whyrnuifcvtnh089} using the change of variables
$$(\xi_1,\xi_2)\mapsto (\eta,\xi_2):=(\sqrt{\xi_1^2+\xi_2^2}-\xi_1,\xi_2),$$
whose Jacobian $J(\eta,\xi_2)$ is nonzero. We get
$$\|\int_Sg(\xi_1,\xi_2)e(x_1\xi_1+x_2\xi_2+x_3\sqrt{\xi_1^2+\xi_2^2})e(y_2\xi_2+y_3(\sqrt{\xi_1^2+\xi_2^2}-\xi_1))d\xi_1d\xi_2\|_{L^6_{y_2,y_3}(w_{[-N,N]^2})}$$
$$=\|\int h(\eta,\xi_2)e(y_2\xi_2+y_3\eta)d\eta d\xi_2\|_{L^6_{y_2,y_3}(w_{[-N,N]^2})},$$
for some appropriate function $h$. Note that if $(\xi_1,\xi_2)\in S$
$$|\eta-\frac{\xi_2^2}{2}|=|\frac{\xi_2^2(\xi_1+\sqrt{\xi_1^2+\xi_2^2}-2)}{2(\xi_1+\sqrt{\xi_1^2+\xi_2^2})}|\lesssim N^{-2\mu-\nu}.$$
It follows that the support of $h$ is inside the $\delta\sim N^{-2\mu-\nu}$ neighborhood of the parabola
$$\{(\frac{\xi_2^2}{2},\xi_2),\;\xi_2\sim N^{-\mu}\}.$$
We can now invoke the parabolic rescaling Proposition \ref{propo:parabooorescal} and Theorem \ref{thmmm1} with $f=\widehat{h}$ to conclude that
$$\|\int h(\eta,\xi_2)e(y_2\xi_2+y_3\eta)d\eta d\xi_2\|_{L^6_{y_2,y_3}(w_{[-N,N]^2})}\lesssim_\epsilon $$$$N^{\epsilon\nu}(\sum_{|H|=N^{-\mu-\frac\nu2}}\|\int_{\xi_2\in H} h(\eta,\xi_2)e(y_2\xi_2+y_3\eta)d\eta d\xi_2\|_{L^6_{y_2,y_3}(w_{[-N,N]^2})}^2)^{1/2}.$$
The conclusion of our proposition follows now by changing back to the original variables, using Minkowski's inequality and \eqref{rioeuvgb823087kswdjioq hufnvjhjkajemuircfgbyed56qwst7whyrnuifcvtnh089}.
\end{proof}

By iterating this proposition we get the following result.

\begin{proposition}Fix $\nu=\frac1M$, with $M\ge 2$ an integer. Given intervals $I\subset [1,2]$ and $J\subset (-\pi/2,\pi/2)$ of lengths $N^{-\nu}$ and $N^{-\frac\nu2}$ respectively,
consider the sector $$S=\{(\xi_1,\xi_2):\;\sqrt{\xi_1^2+\xi_2^2}\in I,\;\arctan (\frac{\xi_2}{\xi_1})\in J\}$$
of length $N^{-\nu}$ and aperture $N^{-\frac\nu2}$.
We have
$$\|E_{S}g\|_{L^6(B_N)}\lesssim_{\epsilon} N^{\epsilon}(\sum_{\Delta\subset S}\|E_{\Delta}g\|_{L^6(w_{B_N})}^2)^{1/2},$$
where the sum runs over a tiling of $S$ into sectors $\Delta$ of length $N^{-\nu}$ and aperture $\sim N^{-\frac12}.$
\end{proposition}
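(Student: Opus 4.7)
The plan is to iterate Proposition \ref{prop:egfr76r234543d8ebvgtiuioerhy} exactly $M-1$ times, keeping the length of the sectors fixed at $N^{-\nu}$ while progressively sharpening the aperture. Set $\mu_k := (k+1)\nu/2$ for $0 \le k \le M-1$: then $\mu_0 = \nu/2$ matches the aperture exponent of $S$, while $\mu_{M-1} = M\nu/2 = 1/2$ matches the target aperture of the sectors $\Delta$. A single application of Proposition \ref{prop:egfr76r234543d8ebvgtiuioerhy} at aperture scale $N^{-\mu_k}$ produces subsectors of aperture $N^{-\mu_k - \nu/2} = N^{-\mu_{k+1}}$, so successive tilings $\mathcal{P}_0, \mathcal{P}_1, \ldots, \mathcal{P}_{M-1}$ of $S$ refine each other and the chain terminates exactly at the desired scale.

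I would proceed by induction on $k$, proving
$$\|E_S g\|_{L^6(w_{B_N})} \le A_k \Bigl(\sum_{S_k \in \mathcal{P}_k} \|E_{S_k} g\|_{L^6(w_{B_N})}^2\Bigr)^{1/2}$$
with $A_0 = 1$ and $A_{k+1} = C_\epsilon N^{\nu\epsilon} A_k$. The base case is trivial since $\mathcal{P}_0 = \{S\}$. The inductive step applies Proposition \ref{prop:egfr76r234543d8ebvgtiuioerhy} with parameters $(\nu, \mu_k)$ to each sector $S_k \in \mathcal{P}_k$, and then interchanges the outer $\ell^2$ sum (over $S_k$) with the new inner $\ell^2$ sum (over subsectors $S_{k+1} \subset S_k$) to consolidate them into a single $\ell^2$ sum over $\mathcal{P}_{k+1}$. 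The only check needed is that the side conditions of Proposition \ref{prop:egfr76r234543d8ebvgtiuioerhy} hold: $2\mu_k + \nu = (k+2)\nu \le 1$, valid precisely for $k \le M-2$, and $2\mu_k = (k+1)\nu \ge \nu$, always true. The first condition is tight at the last iteration $k = M-2$, which is exactly why the choice $\nu = 1/M$ makes the procedure terminate cleanly.

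The accumulated constant after $M-1$ applications is $A_{M-1} = (C_\epsilon N^{\nu\epsilon})^{M-1} \le C_\epsilon^M N^\epsilon$ since $(M-1)\nu < 1$. To reduce the exponent to $\epsilon$ as stated, I would rerun the iteration with $\epsilon/M$ in place of $\epsilon$, absorbing the $M$-fold product of implicit constants into a single $C_{\epsilon,\nu}$. The trivial bound $\|E_S g\|_{L^6(B_N)} \le \|E_S g\|_{L^6(w_{B_N})}$ then finishes. I do not anticipate any serious obstacle: this is essentially a mechanical iteration, and the only delicate point is tracking that the exponents $\mu_k$ remain within the hypothesis range of Proposition \ref{prop:egfr76r234543d8ebvgtiuioerhy} at every step, which the arithmetic $\nu = 1/M$ guarantees.
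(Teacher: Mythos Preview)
Your proposal is correct and follows exactly the paper's approach: iterate Proposition \ref{prop:egfr76r234543d8ebvgtiuioerhy} $M-1$ times with $\mu$ running through $\nu/2, \nu, \ldots, (M-1)\nu/2$, which is precisely what the paper does (with the indexing shifted by one). Your verification of the side conditions $2\mu_k+\nu\le 1$ and $2\mu_k\ge\nu$ is more explicit than the paper's one-line proof, and the step of rerunning with $\epsilon/M$ is in fact unnecessary since $(M-1)\nu\epsilon<\epsilon$ already.
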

\begin{proof}
Apply repeatedly Proposition \ref{prop:egfr76r234543d8ebvgtiuioerhy} with $\mu=\mu_j=j\frac{\nu}{2}$ staring with $j=1$ until $j=M-1$.
\end{proof}
\smallskip

We are left with proving that inequality \eqref{jjjjeduuuueu894riefydvfcvtcrysutdgdi} follows from this proposition.  First, note that $A_1$ can be tiled using $\sim N^{\frac{3\nu}{2}}$ sectors $S$ of length $N^{-\nu}$ and aperture $N^{-\frac\nu2}$. Call this collection $\W_\nu$. Thus, by using the Cauchy-Schwartz inequality and invoking the above proposition, we get for arbitrary $\epsilon>0$
$$
\|E_{A_1}g\|_{L^6(B_N)}\lesssim N^{\frac{3\nu}{4}}(\sum_{S\in \W_\nu}\|E_{S}g\|_{L^6(B_N)}^2)^{1/2}$$
\begin{equation}
\label{eeeefeeee1}\lesssim_{\epsilon} N^{\epsilon+\frac{3\nu}{4}}(\sum_{\Delta\subset A_1}\|E_{\Delta}g\|_{L^6(w_{B_N})}^2)^{1/2},
\end{equation}
where the sum runs over a tiling of $A_1$ into sectors $\Delta$ of length $N^{-\nu}$ and aperture $\sim N^{-\frac12}$. We also observe the following easy inequality
\begin{equation}
\label{eeeefeeee2}\|E_{\Delta}g\|_{L^6(w_{B_N})}\lesssim \|E_{S'}g\|_{L^6(w_{B_N})},
\end{equation}
where $S'$ is the sector in $A_1$ of length $1$ and aperture $N^{-1/2}$ containing $\Delta$. Note that no curvature is involved in this estimate, as the $N^{-1}$ neighborhood of $\Delta$ is essentially a rectangular parallelepiped. Since each such $S'$ contains $N^{\nu}$ sectors $\Delta$, we conclude by combining \eqref{eeeefeeee1} and \eqref{eeeefeeee2} that
$$\|E_{A_1}g\|_{L^6(B_N)}\lesssim_{\epsilon} N^{\epsilon+\frac{3\nu}{4}+\frac\nu2}(\sum_{S'\subset A_1}\|E_{S'}g\|_{L^6(w_{B_N})}^2)^{1/2}.$$
Inequality \eqref{jjjjeduuuueu894riefydvfcvtcrysutdgdi} now follows by choosing $\nu$ appropriately small.

\end{document}